\newcommand{\DEG}{\mathrm{deg}} 
  \newcommand{\Fix}{\mathrm{Fix}}
 \newcommand{\Pic}{\mathrm{Pic}}
 \newcommand{\Alb}{\mathrm{Alb}}
 \newcommand{\Ord}{\mathrm{ord}}
\newcommand{\Dim}{\mathrm{dim}}
 \newcommand{\Ker}{\mathrm{Ker}}
 \newcommand{\Image}{\mathrm{Im}}
 \newcommand{\Z}{\mathbb{Z}}
 \newcommand{\PROJ}{\mathbb{P}}
 \newcommand{\D}{\mathfrak{d}}
 \newcommand{\RH}{\widehat{R}}
 \newcommand{\RT}{\widetilde{R}}
 \newcommand{\DT}{\widetilde{\mathfrak{d}}}
 \newcommand{\PHI}{\varphi}
 \newcommand{\ST}{\widetilde{S}}
 \newcommand{\WH}{\widehat{W}} 
 \newcommand{\WT}{\widetilde{W}} 
 \newcommand{\SigmaT}{\tilde{\sigma}}
 \newcommand{\FDT}{\tilde{f'}}
 \newcommand{\WC}{\check{W}} 
 \newcommand{\ALB}{\bar{\alpha}}
 \newcommand{\ALC}{\check{\alpha}}
 \newcommand{\AL}{\alpha} 
 \newcommand{\GA}{\Gamma}
  \newcommand{\GAT}{\widetilde{\Gamma}}
\newtheorem{prop}{Proposition}[section]
 \newtheorem{thm}[prop]{Theorem}
 \newtheorem{lem}[prop]{Lemma}
 \newtheorem{cor}[prop]{Corollary}
\theoremstyle{remark}
\newtheorem{define}[prop]{Definition}
\newtheorem{rmk}[prop]{Remark}
\numberwithin{equation}{section} 
\begin{document}
 \title{Slope inequarities for irregular cyclic covering fibrations}
\author{Hiroto Akaike}
 
\maketitle{}

 \begin{abstract}
Let $f:S\to B$ be a finite cyclic covering fibration of a fibered surface.
We study the lower bound of slope $\lambda_{f}$ when the relative irregularity $q_{f}$ is positive. 
 \end{abstract}
 %
\section*{Introduction}
Let $f:S\to B$ be a surjective morphism from a smooth projective surface $S$ 
to a smooth projective curve $B$ with connected fibers. 
We call it a fibration of genus $g$ when a general fiber is a curve of genus $g$.
A fibration is called relatively minimal, when any $(-1)$-curve is not contained in fibers.
Here we call a smooth rational curve $C$ with $C^2=-n$ a $(-n)$-curve.
A fibration is called {\it smooth} when all fibers are smooth, {\it isotrivial} when all of the 
smooth fibers are isomorphic, {\it locally trivial} when it is smooth and isotrivial.

Assume that $f:S\to B$ is a relatively minimal fibration of genus $g \geq 2$.
We denote by $K_{f}=K_{S}-f^{\ast}K_{B}$ a relative canonical divisor.
We associate three relative invariants with $f$:
\begin{align*}
 &K_{f}^2=K_{S}^2-8(g-1)(b-1),\\
 &\chi_{f}:=\chi(\mathcal{O}_{S})-(g-1)(b-1),\\
 &e_{f}:=e(S)-4(g-1)(b-1),
\end{align*} 
where $b$ and $e(S)$ respectively denote the genus of the base curve $B$ and the topological Euler-Poincar\'{e} characteristic of $S$.
Then the following are well-known:

\begin{itemize}
  \item (Noether) \quad $12\chi_{f}=K_{f}^2+e_{f}$.
  \item (Arakelov)\quad $K_{f}$ is nef.
  \item (Ueno)\quad $\chi_{f}\geq0$ and $\chi_{f}=0$ if and only if $f$ is locally trivial.
  \item  (Segre)\quad $e_{f}\geq0$ and $e_{f}=0$ if and only if $f$ is smooth.
\end{itemize}

\noindent
When $f$ is not locally trivial, we put 
$$
\lambda_{f}:=\frac{K_{f}^2}{\chi_{f}}
$$
and call it the {\it slope} of $f$ according to \cite{Xiao2}, in which Xiao succeeded in giving its effective lower bound as
$$
\lambda_{f} \geq \frac{4(g-1)}{g}.
$$
Another invariant we are interested in is the {\it relative irregularity} of $f$ defined by $q_{f}:=q(S)-b$,
where $q(S):=\Dim H^1(S,\mathcal{O}_{S})$ denotes the irregularity of $S$ as usual.
When $q_{f}$ is positive, we call $f$ an {\it irregular} fibration.
Xiao showed in \cite{Xiao2} that $\lambda_f\geq 4$ holds for irregular fibrations.
It seems a general rule that the lower bound of the slope goes up, when the relative irregular gets bigger.

In the present papaer, we consider primitive cyclic covering fibrations of type $(g,h,n)$ introduced in \cite{Eno}, 
where Enokizono gave the lower bound of the slope for them.
Note that it is nothing more than a hyperelliptic fibration when $h=0$ and $n=2$.
Recall that Lu and Zuo obtained the lower bound of the slope for irregular double covering fibrations in \cite{X.Lu2} and \cite{X.Lu1}.
Inspired by their results, we try to generalize them to irregular primitive cyclic covering fibrations with $n\geq 3$.
We give the lower bound of slope for those of type $(g,0,n)$ 
in Theorems~\ref{prop2.3} and \ref{thm4.7}, 
and for those of type $(g,h,n)$ with $h\geq 1$ in Theorem $\ref{thm2.7}$.

The key observation for the proof is Proposition~\ref{prop2.2}.
We apply it to the anti-invariant part of the Albanese map 
with respect to the action of the Galois group canonically associated to the cyclic covering fibration, and  
derive the ``negativity'' of the ramification divisor when $q_f>0$.
Recall that $\chi_f$ and (the essential part of) $K_f^2$ can be expressed in terms of 
the so-called $k$-th {\it singularity index} $\alpha_k$ defined for each non-negative integer $k$.
The negativity referred above can be used to get some non-trivial restrictions on $\alpha_0$ which is the most difficult one to handle with 
among all $\alpha_k$'s. 
Thanks to such information together with an analysis of the Albanese map, we can obtain the desired slope inequalities.

We also give a small contribution to the modified Xiao's conjecture that $q_{f} \leq \lceil \frac{g+1}{2} \rceil$ holds, 
posed by Barja, Gonz\'{a}lez-Alonso and Naranjo in \cite{Bar1}.
It is known to be true for fibrations of maximal Clifford index \cite{Bar1} and for hyperelliptic fibrations \cite{X.Lu2} among others.
We show in Theorem \ref{xiaoconj} that $q_f\leq (g+1-n)/2$ holds, when $f$ is a primitive cyclic covering fibration of type $(g,0,n)$ 
under some additional assumptions. 
For the history around the conjecture, see the introduction of \cite{Bar1}.


The author express his sincere gratitude to Professor Kazuhiro Konno for suggesting 
this assignment, his valuable advice and support.  
The author also thanks Dr. Makoto Enokizono for his precious advices, allowing him to use 
Proposition~\ref{prop2.2} freely.
\section{Primitive cyclic covering fibrations}

We recall the basis properties of primitive cyclic covering fibrations, most of which can be found in \cite{Eno}.

\begin{define}
Let $f:S \to B$ be a relatively minimal fibration of genus $g\geq2$.
We call it a primitive cyclic covering fibration of type $(g,h,n)$, when 
there are  $($not necessarily relatively minimal$)$ fibration $\tilde{\PHI}:\WT \to B$ of 
genus $h\geq0$ and a classical $n$-cyclic covering
$$
\tilde{\theta}:\ST=
\mathrm{Spec}_{\WT}\left(\bigoplus_{j=0}^{n-1} \mathcal{O}_{\WT}(-j\DT)\right)\to\WT
$$
branched over a smooth curve $\RT \in |n\DT|$ for some 
$n\geq2$ and $\DT \in \Pic(\WT)$ such that $f$ is the relatively minimal model of 
$\tilde{f}=\tilde{\PHI}\circ\tilde{\theta}$.
\end{define}

Let $f:S \to B$ be a  primitive cyclic covering fibration of type $(g,h,n)$. 
Let $\widetilde{F}$ and $\widetilde{\Gamma}$ be general fibers of 
$\tilde{f}$ and $\tilde{\PHI}$, respectively.
Then the restriction map 
$\tilde{\theta}|_{\widetilde{F}}:\widetilde{F} \to \widetilde{\Gamma}$ is a classical 
$n$-cyclic covering branched over $\RT\cap\widetilde{\GA}$.
By the Hurwitz formula for $\tilde{\theta}|_{\widetilde{F}}$, we get 
\begin{equation}
r:=\RT.\widetilde{\GA}=\frac{2\bigl(g-1-n(h-1)\bigr)}{n-1}.
\end{equation}
From $\RT \in |n\DT|$, it follow that $r$ is a multiple of $n$.

Let $\tilde{\psi}: \WT \to W$ be the contraction morphism to a relative minimal 
model $W\to B$ of $\tilde{\PHI}:\WT \to B$.
Since $\tilde{\psi}$ is a composite of blowing-ups, we can write 
$\tilde{\psi}=\psi_{1}\circ\cdots\psi_{N}$, 
where $\psi_{i}:W_{i}\to W_{i-1}$ denotes the blowing-up 
at $x_{i} \in W_{i-1}\;(i=1,\cdots,N)$, $W_{0}=W$ and $W_{N}=\WT$.
We define a reduced curve $R_{i}$ inductively as $R_{i-1}=(\psi_{i})_{\ast}R_{i}$ 
starting from $R_{N}=\RT$ down to $R_{0}=R$.
We also put $E_{i}=\psi_{i}^{-1}(x_{i})$ and $m_{i}=\mathrm{mult}_{x_{i}}R_{i-1}\;(i=1,\cdots,N)$.

\begin{lem}[\cite{Eno}, Lemma 1.5]
\label{lem1.2}
In the above situation, the following hold for any $i=1,\cdots,N$.

\smallskip

$(1)$ Either $m_{i} \in n\Z$ or $n\Z+1$. 
Furthermore, $m_{i}\in n\Z$
if and only if $E_{i}$ is not contained in $R_{i}$.

\smallskip

$(2)$ $R_{i}={\psi}_{i}^{\ast}R_{i-1}-n[\frac{m_{i}}{n}]E_{i}$, where $[t]$ denotes the 
greatest integer not exceeding $t$.

\smallskip

$(3)$ There exists $\D_{i} \in \Pic(W_{i})$ such that $\D_{i}=\psi_{i}^{\ast}\D_{i-1}-[\frac{m_{i}}{n}]E_{i}$
 and $R_{i}\sim n\D_{i}$, $\D_{N}=\DT$. 

\end{lem}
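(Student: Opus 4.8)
The plan is to establish all three parts at once by descending induction on $i$, running from $i=N$ down to $i=1$, carrying as induction hypotheses that $R_i$ is reduced and that $R_i\sim n\D_i$ for some $\D_i\in\Pic(W_i)$. The base case $i=N$ is exactly the given data: $R_N=\RT$ is smooth, hence reduced, and $\RT\in|n\DT|$ gives $R_N\sim n\DT=n\D_N$. For the inductive step I fix the blow-up $\psi_i:W_i\to W_{i-1}$ with exceptional curve $E_i$ and produce the data one level down by pushing forward. I set $R_{i-1}:=(\psi_i)_\ast R_i$, and using the direct sum decomposition $\Pic(W_i)=\psi_i^\ast\Pic(W_{i-1})\oplus\Z E_i$ I write $\D_i=\psi_i^\ast\D_{i-1}-c_iE_i$ uniquely, which simultaneously defines $\D_{i-1}\in\Pic(W_{i-1})$ and an integer $c_i$. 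Pushing the equivalence $R_i\sim n\D_i$ forward and using $(\psi_i)_\ast E_i=0$ then gives $R_{i-1}\sim n\D_{i-1}$, so the divisibility hypothesis descends one step.

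The heart of the matter is to pin down $c_i$ and the residue of $m_i$ modulo $n$. Since $R_i$ is reduced, the coefficient $\epsilon_i$ of $E_i$ in $R_i$ lies in $\{0,1\}$, with $\epsilon_i=1$ exactly when $E_i\subset R_i$; comparing the strict and total transforms of $R_{i-1}$ yields $R_i=\psi_i^\ast R_{i-1}-(m_i-\epsilon_i)E_i$, where $m_i=\mathrm{mult}_{x_i}R_{i-1}$. Substituting the equivalences $R_i\sim n\D_i$ and $\psi_i^\ast R_{i-1}\sim n\psi_i^\ast\D_{i-1}$ into this identity, the $\psi_i^\ast$-terms cancel and leave $(m_i-\epsilon_i-nc_i)E_i\sim 0$. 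Intersecting with $E_i$ and using $E_i^2=-1$ forces $m_i-\epsilon_i=nc_i$. This single equation is the crux: it shows that $m_i\equiv\epsilon_i\pmod n$ with $\epsilon_i\in\{0,1\}$, which is precisely part $(1)$, including the characterization that $m_i\in n\Z$ if and only if $\epsilon_i=0$, i.e.\ $E_i\not\subset R_i$.

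Parts $(2)$ and $(3)$ then fall out of the same equation. Because $\epsilon_i=m_i-n[\frac{m_i}{n}]$ is the residue, $m_i-\epsilon_i=nc_i$ gives $c_i=[\frac{m_i}{n}]$, so $\D_i=\psi_i^\ast\D_{i-1}-[\frac{m_i}{n}]E_i$, which is $(3)$, and $R_i=\psi_i^\ast R_{i-1}-(m_i-\epsilon_i)E_i=\psi_i^\ast R_{i-1}-n[\frac{m_i}{n}]E_i$, which is $(2)$. I expect the only real obstacle to be the correct use of the divisibility $R_i\sim n\D_i$: it is this $n$-divisibility, fed through intersection against $E_i$, that rigidifies the \emph{a priori} arbitrary multiplicity $m_i$ into a residue equal to $\epsilon_i$, while the reducedness of $R_i$ (which must be seen to survive each push-forward) is what confines $\epsilon_i$ to $\{0,1\}$ and thereby sharpens the congruence into the dichotomy $m_i\in n\Z\cup(n\Z+1)$. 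Everything else is routine blow-up calculus of strict versus total transforms.
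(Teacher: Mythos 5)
The paper gives no proof of this lemma at all: it is imported verbatim from Enokizono's work (\cite{Eno}, Lemma~1.5), so there is no internal argument to compare yours against, and your proof must stand on its own. It does. The descending induction is sound: reducedness survives pushforward (the coefficients of the non-exceptional components are unchanged and the $E_i$-component is killed), and linear equivalence is preserved by pushforward along the birational map $\psi_i$, so $R_{i-1}\sim n\D_{i-1}$ with $\D_{i-1}$ defined via the decomposition $\Pic(W_i)=\psi_i^{\ast}\Pic(W_{i-1})\oplus\Z E_i$ (equivalently, $\D_{i-1}=(\psi_i)_{\ast}\D_i$). The strict-versus-total transform identity $R_i=\psi_i^{\ast}R_{i-1}-(m_i-\epsilon_i)E_i$ with $\epsilon_i\in\{0,1\}$ is correct because $(\psi_i)_{\ast}R_i=R_{i-1}$ pins $R_i$ to be the strict transform plus $\epsilon_i E_i$, and reducedness confines $\epsilon_i$ to $\{0,1\}$. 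Feeding the two linear equivalences through this identity and comparing coefficients of $E_i$ yields $m_i-\epsilon_i=nc_i$, from which (1), (2), (3) all follow, including $c_i=[\frac{m_i}{n}]$ since $\epsilon_i$ is then the residue of $m_i$ modulo $n$. Two cosmetic remarks: you do not need to intersect $(m_i-\epsilon_i-nc_i)E_i\sim 0$ with $E_i$, since the direct-sum decomposition of $\Pic(W_i)$ already forces the coefficient to vanish (though the intersection argument is equally valid); and the top-down direction of your induction matches the way the paper actually defines $R_{i-1}$ and $\D_{i-1}$ by pushforward from $R_N=\RT$, $\D_N=\DT$, which is also the mechanism of Enokizono's original proof.
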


\begin{rmk}
By \cite{Eno}, we can assume the following for any primitive cyclic covering 
fibrations.
Let $\SigmaT$ be a generator of the covering transformation group of $\tilde{\theta}$, and 
$\sigma$ the automorphism of $S$ over $B$ induced by $\SigmaT$.
Then the natural morphism $\rho:\ST\to S$ is a minimal succession of blowing-ups that resolves all isolated fixed points of $\sigma$.
\end{rmk}
 

We must pay a special attention when $h=0$, since we have various relatively minimal models for $\tilde{\PHI}:\WT \to B$.
Using elementary transformations, one can show the following.

\begin{lem}[\cite{Eno}, Lemma 3.1]
\label{lem1.4}
Let $f:S\to B$ be a primitive cyclic covering fibration of type $(g,0,n)$.
Then there is a relatively minimal model of $\tilde{\PHI}:\WT \to B$ such that 
$$
\mathrm{mult}_{x}R_{h}\leq\frac{r}{2}=\frac{g}{n-1}+1
$$
for all $x \in R_{h}$, where $R_{h}$ denotes the $\PHI$-horizontal part of $R$.
Moreover if $\mathrm{mult}_{x}R>\frac{r}{2}$, then 
$\mathrm{mult}_{x}R \in n\Z + 1$.
\end{lem}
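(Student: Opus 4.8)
The plan is to use that, because $h=0$, the relatively minimal model $W\to B$ of $\tilde\PHI$ is a $\PROJ^1$-bundle over $B$, and that any two relatively minimal models of a ruled surface over $B$ are joined by a chain of elementary transformations, each of which blows up a point on a fibre and then contracts the strict transform of that fibre. I would produce the desired model by lowering the multiplicities of $R$ along fibres, one point at a time, by such transformations. The threshold $r/2$ enters naturally as the fixed point of the involution $m\mapsto r-m$ describing how a multiplicity is altered by one elementary transformation.

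First I would record the local effect. Let $\GA$ be the fibre of $W\to B$ through a point $x$, put $m=\mathrm{mult}_x R$, and suppose $\GA\not\subset R$, so that $R.\GA=R_h.\GA=r$ and hence $m=\mathrm{mult}_x R_h$. Blowing up $x$ by $\psi_1$ gives an exceptional curve $E$ with strict transforms $\hat R=\psi_1^\ast R-mE$ and $\hat\GA=\psi_1^\ast\GA-E$, the latter a $(-1)$-curve. Contracting $\hat\GA$ yields a new $\PROJ^1$-bundle and a new point $x'$ with $\mathrm{mult}_{x'}R'=\hat R.\hat\GA=R.\GA-m=r-m$. Since $\sum_{y\in\GA}\mathrm{mult}_y R_h\le R_h.\GA=r$, at most one point of each fibre has multiplicity exceeding $r/2$; performing the transformation there replaces such an $m>r/2$ by $r-m<r/2$.

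The real content is that iterating this produces, after finitely many steps, a model with $\mathrm{mult}_x R_h\le r/2$ for all $x$. I would control termination with the monovariant $\D^2$. Using $\D_1=\psi_1^\ast\D-[\frac mn]E$ from Lemma~\ref{lem1.2}(3) and $\D.\GA=r/n$, the elementary transformation above changes the self-intersection by $\D'^2-\D^2=\frac rn\bigl(\frac rn-2[\frac mn]\bigr)$, an integer which one checks is $\le 0$ whenever $m>r/2$, using $r\in n\Z$ and $m\in n\Z\cup(n\Z+1)$ from Lemma~\ref{lem1.2}(1). Generically it is strictly negative, and since $\D^2$ is bounded below over all relatively minimal models — because $K_f^2\ge 0$ and $\chi_f$ are fixed and expressible through $\D$ and the non-negative singularity indices — the reductions terminate. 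The borderline case where $\D^2$ is unchanged, together with the possibility that a high multiplicity merely migrates to the newly created fibre (for instance when the branches of $R$ at $x$ share a common tangent transverse to $\GA$), is what makes termination delicate; I expect this to be the main obstacle, to be handled by showing that such steps strictly decrease a secondary measure of the local tangency.

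Finally, for the supplementary assertion I would argue by minimality, taking among the models just constructed one that also minimises $\D^2$. Suppose $\mathrm{mult}_x R>r/2$ at some $x$. As $\mathrm{mult}_x R_h\le r/2$, the fibre $\GA$ through $x$ must be a component of $R$, and by Lemma~\ref{lem1.2}(1) we have $\mathrm{mult}_x R\in n\Z$ or $\mathrm{mult}_x R\in n\Z+1$. In the first case $E\not\subset R_1$, and I would show that the elementary transformation at $x$ strictly lowers $\D^2$ while keeping all horizontal multiplicities $\le r/2$, contradicting minimality; hence $\mathrm{mult}_x R\in n\Z+1$. The careful bookkeeping for the vertical components of $R$, namely the fibres contained in the branch locus, is the delicate point here, and is where Lemma~\ref{lem1.2} is used most heavily.
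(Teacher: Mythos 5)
You should first be aware that the paper itself offers no proof of this statement: it is quoted verbatim from [Eno, Lemma~3.1], and the only methodological hint given is the sentence ``Using elementary transformations, one can show the following.'' So your choice of method agrees with the intended one, and a good part of your sketch is sound: the local effect of an elementary transformation (the multiplicity $m$ at $x$ is traded for $r-m$ at the new point, and at most one point per fibre can have multiplicity above $r/2$), and your change formula $\D'^2-\D^2=\frac rn\bigl(\frac rn-2\bigl[\frac mn\bigr]\bigr)$ is correct in all cases; moreover your claim that it is $\le 0$ whenever $m>r/2$ does hold, because $m-1<\frac r2<m$ with $m\in n\Z+1$ and $r\in n\Z$ would force $n\mid 1$.

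Nevertheless, as a proof the proposal has a genuine gap, and it sits exactly where you flag it. The equality case of your monovariant, $m=\frac r2+1\in n\Z+1$ (so $\frac r2\in n\Z$), is not a peripheral nuisance: it is the whole content of the lemma, being precisely the source of the $n\Z+1$ exception in the ``moreover'' clause. In that case the elementary transformation leaves $\D^2$ unchanged, so a $\D^2$-minimal model may still carry such points with the fibre not contained in $R$, i.e.\ with $\mathrm{mult}_xR_h=\frac r2+1>\frac r2$; deferring this to an unspecified ``secondary measure of the local tangency'' means that neither the termination of your iteration nor the first clause is actually established, and the ``moreover'' clause is never touched in the borderline case. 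The missing step can in fact be closed with tools you already invoke, which is why the gap is frustrating: after the transformation at such an $x$, the new fibre $\GA'$ (the image of the exceptional curve) \emph{is} a component of the new branch curve because $m\in n\Z+1$, so any point $y$ of $\GA'$ coming from a point infinitely near to $x$ acquires total multiplicity $\mathrm{mult}_y\hat R_h+1$; applying Lemma~\ref{lem1.2}(1) on both sides of the transformation forces $\mathrm{mult}_y\hat R_h\in n\Z$ when $n\ge 3$, hence $\le\frac r2$, so no bad point migrates and the number of violating points strictly drops (for $n=2$ the migrated point has total multiplicity in $n\Z$ exceeding $\frac r2$, so one further transformation strictly lowers $\D^2$, again contradicting minimality). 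A secondary, more easily fillable gap: your one-line reason for $\D^2$ being bounded below is not a proof, since $\D.K_{\PHI}$ and $\D^2$ enter $\chi_f$ with opposite signs; one must first use $R\equiv-\frac r2K_{\PHI}+M_0\GA$ (as in the proof of Proposition~\ref{prop1.2}) to reduce both to the single parameter $M_0$, whose coefficient in $\chi_f$ is positive, before the fixedness of $\chi_f$ and $\alpha_k\ge 0$ give a lower bound.
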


When $h=0$, we always assume that a relatively minimal model of $\tilde{\PHI}:\WT \to B$ is as in the above lemma.

\begin{cor}
Let the situation be the same as in Lemma~\ref{lem1.4}. 
If $x$ is a singular point of $R$ and $m=\mathrm{mult}_{x}R$, then  
$$
n\biggl[\frac{m}{n}\biggr]\leq\frac{r}{2}.
$$
\label{lem1.4'}
\end{cor}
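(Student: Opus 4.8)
The plan is to deduce the bound from Lemma~\ref{lem1.4} by controlling how much larger $\mathrm{mult}_{x}R$ can be than $\mathrm{mult}_{x}R_{h}$. Writing $R=R_{h}+R_{v}$ for the decomposition into the $\PHI$-horizontal part $R_{h}$ and the vertical part $R_{v}$ (with respect to the fibration $W\to B$), one has $m=\mathrm{mult}_{x}R_{h}+\mathrm{mult}_{x}R_{v}$. Since $W\to B$ is relatively minimal of genus $0$, it is a $\PROJ^{1}$-bundle, so every fiber is a smooth irreducible rational curve; moreover $R=R_{0}$ is reduced by its construction in Lemma~\ref{lem1.2}. Hence $R_{v}$ is a reduced sum of pairwise disjoint fibers, at most one of which meets $x$, and therefore $\mathrm{mult}_{x}R_{v}\leq 1$, giving $m\leq \mathrm{mult}_{x}R_{h}+1$.

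Next I would invoke the two clauses of Lemma~\ref{lem1.4}. As $x$ is a singular point we have $m\geq 2$, so $\mathrm{mult}_{x}R_{h}\geq m-1\geq 1$; in particular $x\in R_{h}$, and the first clause of the lemma yields $\mathrm{mult}_{x}R_{h}\leq r/2$. Combining this with the previous paragraph gives $m\leq r/2+1$.

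It then remains to split into two cases. If $m\leq r/2$, the elementary inequality $n[m/n]\leq m$ immediately gives $n[m/n]\leq r/2$. If instead $m>r/2$, the ``moreover'' clause of Lemma~\ref{lem1.4} forces $m\in n\Z+1$, so the remainder of $m$ upon division by $n$ equals $1$ and hence $n[m/n]=m-1$; together with $m\leq r/2+1$ this again yields $n[m/n]\leq r/2$. In either case the assertion follows.

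The one step deserving genuine care, and which I regard as the crux, is the estimate $\mathrm{mult}_{x}R_{v}\leq 1$ --- that is, the passage from $R_{h}$, to which Lemma~\ref{lem1.4} applies directly, to the full reduced curve $R$, about which the corollary is phrased. This uses both that $W$ is a $\PROJ^{1}$-bundle with smooth fibers and that $R$ is reduced; once it is secured, the remainder is just the arithmetic of the greatest-integer function combined with the congruence $m\equiv 1\pmod{n}$.
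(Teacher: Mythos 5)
Your proof is correct and follows essentially the same route as the paper's: both arguments rest on the two clauses of Lemma~\ref{lem1.4} plus floor-function arithmetic, splitting into the case $m\leq r/2$ (where $n[\frac{m}{n}]\leq m$ suffices) and the case $m>r/2$ (where the ``moreover'' clause forces $m\in n\Z+1$, so $n[\frac{m}{n}]=m-1\leq r/2$). If anything, you make explicit a step the paper leaves implicit, namely that $m\leq \mathrm{mult}_{x}R_{h}+1\leq \frac{r}{2}+1$ because the $\PHI$-vertical part of the reduced curve $R$ consists of disjoint smooth fibers of the $\PROJ^{1}$-bundle $W\to B$, so it contributes at most $1$ to the multiplicity at $x$.
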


\begin{proof}
When $m \in n\Z$, the inequality clearly holds by Lemma $\ref{lem1.4}$.
If $m \in n\Z+1$, then $n[\frac{m}{n}]+1=m$.
From Lemma $\ref{lem1.4}$, we have $m \leq \frac{r}{2}+1$. 
So we get $n[\frac{m}{n}]\leq\frac{r}{2}$.
\end{proof}



In closing the section, we give an easy lemma that will be usuful in the sequel.

\begin{lem}
\label{lem3.2}
Let $\pi:C_{1}\to C_{2}$ be a surjective morphism between smooth projective curves. 
Let $R_{\pi}$ and $\Delta$ be the 
ramification divisor and the branch locus of $\pi$, respectively.
Then,
$$
(\deg(\pi)-1)\sharp\Delta \geq \deg R_{\pi},
$$
where $\sharp \Delta$ denotes the cardinality of $\Delta$ as a set of points.
\end{lem}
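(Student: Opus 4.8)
The plan is to reduce the global inequality to a fiberwise estimate over each branch point and then sum. First I would recall that, since we are working over $\C$, the ramification is tame and the ramification divisor is $R_{\pi}=\sum_{P\in C_1}(e_P-1)P$, where $e_P$ denotes the ramification index of $\pi$ at $P$; in particular $\deg R_{\pi}=\sum_{P}(e_P-1)$. Because $e_P=1$ exactly when $\pi$ is unramified at $P$, the support of $R_{\pi}$ is contained in $\pi^{-1}(\Delta)$, so $\deg R_{\pi}$ can be computed by grouping the points $P$ according to their image in $\Delta$.

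Next I would fix a branch point $Q\in\Delta$ and write its fiber as $\pi^{-1}(Q)=\{P_1,\dots,P_k\}$ with ramification indices $e_1,\dots,e_k$. Writing $d:=\deg\pi$, the degree of the fiber gives $\sum_{i=1}^{k}e_i=d$, so the contribution of this single fiber to $\deg R_{\pi}$ is
\[
\sum_{i=1}^{k}(e_i-1)=d-k\leq d-1,
\]
where the last inequality uses only that the fiber over $Q$ is nonempty, i.e. $k\geq 1$.

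Finally I would sum this local estimate over the finitely many points of $\Delta$, obtaining
\[
\deg R_{\pi}=\sum_{Q\in\Delta}\ \sum_{P\in\pi^{-1}(Q)}(e_P-1)\ \leq\ \sum_{Q\in\Delta}(d-1)\ =\ (\deg(\pi)-1)\,\sharp\Delta,
\]
which is exactly the claimed bound. There is no genuine obstacle in this argument; the only point deserving care is the characteristic-zero hypothesis, which guarantees that the ramification divisor is tame so that the identity $\deg R_{\pi}=\sum_{P}(e_P-1)$ holds with no wild correction term and the fiberwise counting is valid.
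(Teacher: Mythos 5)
Your proof is correct and follows essentially the same route as the paper's: group the points of $R_{\pi}$ by their images in $\Delta$, use that the ramification indices over each branch point sum to $\deg(\pi)$, and bound each fiber's contribution by $\deg(\pi)-1$ since the fiber is nonempty. The only cosmetic difference is that you state the fiberwise bound $d-k\leq d-1$ before summing, while the paper sums first and subtracts $\sum_i j_i$ at the end; the content is identical.
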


\begin{proof}
We put $\Delta=\{Q_{1},\;\dots\;,Q_{\sharp \Delta}\}$.
For any $Q_{i} \in \Delta$, we put $\pi^{-1}(Q_i)=\{P_{1}^{i},\;\dots\;,P_{j_{i}}^{i}\}$.
Note that $\deg(\pi) = r(P_{1}^{i})+\;\cdots\;+r(P_{j_{i}}^{i})$ for any $i=1,\;\dots\;,\sharp \Delta$,
where $r(P)$ denotes the ramification index of $\pi$ around $P\in C_{1}$.
Then, from the property of ramification divisor, 
\begin{align*}
\deg R_{\pi} &= \sum_{i=1}^{\sharp \Delta}\sum_{j=1}^{j_i}(r(P_j^{i})-1)\\
&= \deg(\pi)\sharp \Delta-(j_{1}+\;\cdots\;j_{\sharp \Delta})\\
& \leq (\deg(\pi)-1)\sharp \Delta,
\end{align*}
which is what we want.
\end{proof}

\section{Singularity indices and the formulae for $K_{f}^2$ and $\chi_{f}$.}

We let $f:S\to B$ be a primitive cyclic covering fibration of type $(g,h,n)$ and freely use the notation in the previous section. 
We obtain a classical $n$-cyclic covering
$\theta_{i}:S_{i} \to W_{i}$ branched over $R_{i}$ by setting 
$$S_{i} = \mathrm{Spec}\;(\bigoplus_{j=0}^{n-1}\mathcal{O}_{W_{i}}(-j \D_{i}))$$
Since $R_{i}$ is reduced, $S_{i}$ is a normal surface.
There exists a natural birational morphism $S_{i}\to S_{i-1}$. Set $S'=S_{0}$, $\theta=\theta_{0}$, $\D=\D_{0}$ and $f'=\PHI \circ \theta $.
Then we have a commutative diagram:
$$
\xymatrix{
\ST=S_N \ar[d]^{\widetilde{\theta}} \ar[r] \ar@(ur,ul)[rrrr]^{\rho} & S_{N-1} \ar[d]^{\theta_{N-1}} \ar[r] & \cdots \ar[r] & S_0=S^{\prime} \ar[d]^{\theta} & S \ar[ddl]^f\\
\WT=W_N \ar[r]^{\psi_N} \ar[drrr]^{\widetilde{\varphi}}                & W_{N-1} \ar[r]^{\psi_{N-1}}            & \cdots \ar[r]^{\psi_1} & W_0=W \ar[d]^{\varphi} \\
                                                                                                    &                                                &                              & B}$$
The well-known formulae for cyclic coverings give us 
\begin{align}
& K_{\tilde{f}}^2=n(K_{\tilde{\PHI}}^2+2(n-1)\tilde{\D}.K_{\tilde{\PHI}}+(n-1)^2 \tilde{\D}^2),
\label{(1.1)}\\
& \chi_{\tilde{f}}=n\chi_{\tilde{\PHI}}+\frac{1}{2}\sum_{j=0}^{n-1}j\tilde{\D}(j\tilde{\D}+K_{\tilde{\PHI}}),
\label{(1.2)}
\end{align}
(see e.g., \cite{Eno}).
From Lemma $\ref{lem1.2}$ and a simple calculation, we get
\begin{align}
\tilde{\D}^2=\D^2-\sum_{i=1}^{N}\biggl[ \frac{m_{i}}{n} \biggr]^2,
\label{(1.3)} \\
\tilde{\D}.K_{\tilde{\PHI}}=\D.K_{\PHI}+\sum_{i=1}^{N}\biggl[ \frac{m_{i}}{n} \biggr]
\label{(1.4)}
\end{align}
and
\begin{equation}
K_{\tilde{\PHI}}^2=K_{\PHI}^2-N.
\label{(1.5)}
\end{equation}

\begin{define}
\label{def1.1}
Let $\Gamma_p$ and $F_p$ respectively denote fibers of $\PHI:W\to B$ and $f:S\to B$ over a point $p\in B$.
For any fixed $p\in B$, we consider all singular points $($including infinitely near ones$)$ of $R$ on $\Gamma_{p}$. 
For any positive integer $k$, we let $\alpha_{k}(F_{p})$ be the number of singular points of multiplicity either $kn$ or $kn+1$ among them, 
and call it the $k$-th {\it singularity index} of $F_{p}$. 
We put $\alpha_{k}:=\sum_{p\in B}\alpha_{k}(F_{p})$ and call it the $k$-th singularity index of the fibration.
We also put $\alpha_{0}:=(K_{\tilde{\PHI}}+\RT)\RT$ and call it the ramification index of $\tilde{\PHI}|_{\RT}:\RT \to B$.
\end{define}

By a simple calculation, we get

\begin{align}
& N=\sum_{k\geq1}\alpha_{k},
\label{(1.6)} \\
& \sum_{i=1}^{N} \biggl[ \frac{m_{i}}{n} \biggr]=\sum_{k\geq1}k \alpha_{k},
\label{(1.7)} \\
& \sum_{i=1}^{N}  \biggl[ \frac{m_{i}}{n} \biggr]^2=\sum_{k\geq1}k^2 \alpha_{k}
\label{(1.8)}
\end{align}
and
\begin{equation}
\alpha_{0}=(K_{\PHI}+R)R-\sum_{k\geq1}nk(nk-1)\alpha_{k}.
\label{(1.9)}
\end{equation}

Substituting (\ref{(1.3)}) through (\ref{(1.8)}) for (\ref{(1.1)}) and (\ref{(1.2)}), one gets
$$
K_{\tilde{f}}^2=n(K_{\PHI}^2+2(n-1)\D.K_{\PHI}+(n-1)^2 \D^2)-n\sum_{k\geq1}((n-1)k-1)^2\alpha_{k}
$$
and
$$
\chi_{\tilde{f}}=n\chi_{\tilde{\PHI}}+\frac{n(n-1)}{4}\D.K_{\PHI}+\frac{n(n-1)(2n-1)}{12} \D^2-\frac{n(n-1)}{12}\sum_{k\geq1}((2n-1)k^2-3k)\alpha_{k}.
$$
Since $K_{f}^2\geq K_{\tilde{f}}^2$ , $\chi_{\tilde{f}}=\chi_{f}$ and $\chi_{\tilde{\PHI}}=\chi_{\PHI}$, we obtain
\begin{equation}
K_{f}^2\geq n(K_{\PHI}^2+2(n-1)\D.K_{\PHI}+(n-1)^2 \D^2)-n\sum_{k\geq1}((n-1)k-1)^2\alpha_{k}
\label{(1.10)}
\end{equation}
and
\begin{equation}
\chi_{f}=n\chi_{\PHI}+\frac{n(n-1)}{4}\D.K_{\PHI}+\frac{n(n-1)(2n-1)}{12} \D^2\\
-\frac{n(n-1)}{12}\sum_{k\geq1}((2n-1)k^2-3k)\alpha_{k}.
\label{(1.11)}
\end{equation}

We treat the cases $h=0$ and $h>0$ separately.

\begin{prop}
\label{prop1.2}
Let $f:S\to B$ be a primitive cyclic cover fibration of type $(g,0,n)$ and let $\alpha_{i}$ $(i\geq 0 )$ be the singularity index in Definition $\ref{def1.1}$. Then,
 \begin{align}
(r-1)K_{f}^2 & \geq  \frac{(r-2)n-r}{n} (n-1)\alpha_{0}\label{(1.12)} \\
& + \sum_{k\geq 1}^{nk\leq \frac{r}{2}}  \biggl( \frac{n^2-1}{n} nk(r-1-(nk-1))-(r-1)n \biggr) \alpha_{k},
\nonumber
\end{align}
\begin{align}
(r-1)\chi_{f}^2= \frac{(2r-3)n-r}{12n} (n-1)\alpha_{0} + \sum_{k\geq 1}^{nk\leq \frac{r}{2}}  \biggl( \frac{n^2-1}{12n}nk(r-1-(nk-1)) \biggr) \alpha_{k}.
\label{(1.13)}
\end{align}
\end{prop}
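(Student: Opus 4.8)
The plan is to derive everything from the two master formulae (\ref{(1.10)}) and (\ref{(1.11)}) together with (\ref{(1.9)}), exploiting the extra geometry available when $h=0$. First I would record that, since $\tilde{\PHI}$ has genus $0$, its relatively minimal model $\PHI:W\to B$ is a (geometrically) ruled surface; hence $K_{\PHI}^2=0$ and $\chi_{\PHI}=0$. Next, by Corollary~\ref{lem1.4'} every singular point of $R$ (infinitely near ones included) of multiplicity $m$ satisfies $n[\frac{m}{n}]\le\frac r2$, so that $\alpha_k=0$ whenever $nk>\frac r2$; this is exactly what turns the full sums $\sum_{k\ge1}$ of (\ref{(1.10)}) and (\ref{(1.11)}) into the truncated sums $\sum_{nk\le r/2}$ in the statement. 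Finally, since $R\sim n\D$, I would replace $\D.K_{\PHI}$ by $\tfrac1n K_{\PHI}.R$ and $\D^2$ by $\tfrac1{n^2}R^2$ throughout, so that the right-hand sides become linear expressions in the two numbers $K_{\PHI}.R$ and $R^2$ (plus the $\alpha_k$-sums).

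The heart of the argument is a single numerical identity on $W$, namely $R^2=-r\,K_{\PHI}.R$. To prove it I would use that a ruled surface has Picard number $2$, so in $\mathrm{Num}(W)_{\mathbb Q}$ the three classes $K_{\PHI}$, $R$ and a general fiber $\GA$ satisfy one linear relation. Since $K_{\PHI}.\GA=-2\neq0$ and $\GA^2=0$, the pair $\{K_{\PHI},\GA\}$ is already a basis, and using $R.\GA=r$ the relation can be normalised as $rK_{\PHI}+2R+z\GA\equiv0$. Intersecting this with $K_{\PHI}$ (and using $K_{\PHI}^2=0$) forces $z=K_{\PHI}.R$, and intersecting it with $R$ then gives $2R^2+2r\,K_{\PHI}.R=0$, which is the claimed identity.

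With this in hand the elimination is immediate. Set $A:=(K_{\PHI}+R).R$, which by (\ref{(1.9)}) equals $\alpha_0+\sum_{k\ge1}nk(nk-1)\alpha_k$. Substituting $R^2=-r\,K_{\PHI}.R$ gives $A=(1-r)K_{\PHI}.R$, hence $(r-1)K_{\PHI}.R=-A$ and $(r-1)R^2=rA$. Multiplying (\ref{(1.10)}) and (\ref{(1.11)}) by $r-1$, with $K_{\PHI}^2=\chi_{\PHI}=0$ and the above substitutions for $\D.K_{\PHI}$ and $\D^2$, collects $(r-1)K_f^2$ and $(r-1)\chi_f$ into a single multiple of $A$ plus the truncated $\alpha_k$-sums. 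Since $A$ itself carries both $\alpha_0$ and the terms $nk(nk-1)\alpha_k$, expanding and regrouping produces the stated coefficient $\tfrac{(r-2)n-r}{n}(n-1)$ (resp. $\tfrac{(2r-3)n-r}{12n}(n-1)$) of $\alpha_0$ and the stated coefficient of each $\alpha_k$.

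The main obstacle is really just the identity $R^2=-r\,K_{\PHI}.R$: everything else is substitution and collecting like terms, whereas this relation is what is special to $h=0$, is what makes the two unknowns $K_{\PHI}.R$ and $R^2$ collapse to a single multiple of $A$, and is the source of the ubiquitous factor $r-1$. After that, the only care needed is the bookkeeping confirming that the coefficient of each $\alpha_k$ simplifies to $\tfrac{n^2-1}{n}nk\bigl(r-1-(nk-1)\bigr)-(r-1)n$ in the $K_f^2$ inequality and to $\tfrac{n^2-1}{12n}nk\bigl(r-1-(nk-1)\bigr)$ in the $\chi_f$ identity; the inequality (rather than equality) in the former is inherited solely from $K_f^2\ge K_{\tilde f}^2$ used in (\ref{(1.10)}).
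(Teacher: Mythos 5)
Your proposal is correct and takes essentially the same route as the paper: your key identity $R^2=-r\,K_{\varphi}.R$, derived from the Picard-rank-two structure of the ruled surface, is exactly the paper's numerical decomposition $R\equiv-\tfrac{r}{2}K_{\varphi}+M_0\Gamma$ (which gives $K_{\varphi}.R=-2M_0$ and $R^2=2rM_0$), and your quantity $A=(K_{\varphi}+R).R$ is the paper's $2(r-1)M_0$ in (\ref{(1.16)}). After that, both arguments proceed identically, substituting into (\ref{(1.10)}) and (\ref{(1.11)}) via (\ref{(1.9)}), truncating the sums by Corollary~\ref{lem1.4'}, and collecting coefficients, which do simplify as you claim.
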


\begin{proof}
Note that if $\AL_{k}>0$, then $nk\leq \frac{r}{2}$ from Corollary \ref{lem1.4'}. 

We find that $R \equiv -\frac{r}{2}K_{\PHI}+M_0\Gamma$ for some $M_0 \in \frac{1}{2} \Z$, 
where the symbol $\equiv$ means the numerical equivalence,
since $\PHI:W \to B$ is a $\mathbb{P}^1$-bundle and we have $K_{W}.\Gamma = -2$ and  $\RT.\widetilde{\GA}=R.\GA = r$. Hence we get 
\begin{align}
R.K_{\PHI}=-2M_0,
\label{(1.14)}
\end{align}
\begin{align}
R^2=2rM_0.
\label{(1.15)}
\end{align}
Therefore we have 
$$
(K_{\PHI}+R).R=2(r-1)M_0.
$$
From this equality and (\ref{(1.9)}), we get 
\begin{align}
2(r-1)M_0=\alpha_{0}+\sum_{k\geq 1}^{nk\leq \frac{r}{2}}nk(nk-1)\alpha_{k}.
\label{(1.16)}
\end{align}
On the other hand, substituting (\ref{(1.14)}) and (\ref{(1.15)}) for (\ref{(1.10)}), we get 
$$
K_{f}^2\geq \frac{(r-2)n-r}{n}2(n-1)M_0 -n \sum_{k\geq 1}^{nk\leq \frac{r}{2}} ((n-1)k-1)^2\alpha_{k}.
$$
Multiplying this by $r-1$ and substituting $(\ref{(1.16)})$ for it, we get (\ref{(1.12)}).
Similarly one can show (\ref{(1.13)}).
\end{proof}

When $h>0$, we have the following:

\begin{prop}
\label{prop1.3}
Let $f:S\to B$ be a primitive cyclic covering fibration of type $(g,h,n)$ such that $h\geq1$ and 
$\alpha_{i}$ $(i\geq 0 )$ the singularity index in Definition $\ref{def1.1}$. 
 Put
$$
t:=2(g-1)-(h-1)(n+1),
$$
$$
T:=\left\{
    \begin{array}{lcr}
     2(g-1)K_{\PHI}.R,  & \text{if }h=1, \\
     -\frac{\bigl((g-1-n(h-1))K_{\PHI}-(n-1)(h-1)R\bigr)^2}{(n-1)(h-1)}, &\text{if }h\geq2.
    \end{array}
  \right.
$$
Then $t>0$ and $T\geq 0$.
Furthermore, 
\begin{align}
\numberwithin{equation}{section}
tK_{f}^2\geq tx'\frac{K_{\PHI}^2}{(n-1)(h-1)}+ty'T+tz'\AL_{0}+\sum_{k\geq1}a_{k}\AL_{k}
\label{1.17}
\end{align}
and
\begin{align}
\numberwithin{equation}{section}
t\chi_{f}=nt\chi_{\PHI}+ t\bar{x}'\frac{K_{\PHI}^2}{(n-1)(h-1)}+t\bar{y}'T+t\bar{z}'\AL_{0}+\sum_{k\geq1}\bar{a}_{k}\AL_{k}
\label{1.18}
\end{align}
hold, where
\begin{align*}
x'&=\frac{(g-1)(n-1)\bigl((g-1)(n+1)-2n(h-1)\bigr)}{nt},
\quad\bar{x}'=\frac{(n-1)(n+1)(g-1-n(h-1))^2}{12nt},\\
y'&=\frac{(n-1)\bigl(2-\frac{n-1}{n}\bigr)}{t},
\quad\bar{y}'=\frac{(n-1)(n+1)}{12nt},\\
z'&=\frac{2(n-1)^2(g-1)}{nt},
\quad\bar{z}'=\frac{(n-1)\bigl(2(2n-1)(g-1)-n(n+1)(h-1)\bigr)}{12nt},
\end{align*}
\begin{align*}
a_{k}=12\bar{a_{k}}-nt,\quad
\bar{a}_{k}=\frac{k}{12}(n^{2}-1)\bigl(2(g-1)+n(h-1)((n-1)k-2)\bigr).
\end{align*}
In $(\ref{1.17})$ and $(\ref{1.18})$, the quantity $\frac{K_{\PHI}^2}{(n-1)(h-1)}$ is understand to be zero, when $h=1$.
\end{prop}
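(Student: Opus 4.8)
The plan is to derive everything from the two identities (\ref{(1.10)}) and (\ref{(1.11)}), eliminating the intersection numbers $R.K_{\PHI}$ and $R^2$ by means of two linear relations and then reading off the coefficients. First I would establish $t>0$ and $T\geq0$. Writing $d:=g-1-n(h-1)$, equation (1) gives $d=\tfrac{(n-1)r}{2}>0$, and a direct rearrangement yields
\[
t=2(g-1)-(h-1)(n+1)=2d+(n-1)(h-1),
\]
which is positive since $d>0$, $h\geq1$, $n\geq2$. For $T\geq0$ I would treat the cases separately. When $h=1$ we have $K_{\PHI}.\GA=2h-2=0$ and $K_{\PHI}$ is nef (relative minimality), while $R$ is effective, so $T=2(g-1)K_{\PHI}.R\geq0$. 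When $h\geq2$, set $A:=dK_{\PHI}-(n-1)(h-1)R$; using $K_{\PHI}.\GA=2h-2$ and $R.\GA=r$ together with $(n-1)r=2d$, one computes $A.\GA=2d(h-1)-(n-1)(h-1)r=0$. Since $\GA^2=0$ and $\GA\not\equiv0$, the Hodge index theorem forces $A^2\leq0$, whence $T=-A^2/((n-1)(h-1))\geq0$.

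Next, since $R\sim n\D$ I may substitute $\D.K_{\PHI}=\tfrac1n R.K_{\PHI}$ and $\D^2=\tfrac1{n^2}R^2$ into (\ref{(1.10)}) and (\ref{(1.11)}). Writing $u:=R.K_{\PHI}$ and $v:=R^2$, the right-hand side of (\ref{(1.10)}) becomes $nK_{\PHI}^2+2(n-1)u+\tfrac{(n-1)^2}{n}v-n\sum_{k\geq1}((n-1)k-1)^2\AL_{k}$, and analogously for $\chi_f$. I then impose two linear relations on $(u,v)$. The first comes from (\ref{(1.9)}): $u+v=(K_{\PHI}+R).R=\AL_{0}+\sum_{k\geq1}nk(nk-1)\AL_{k}=:S$. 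The second comes from the definition of $T$: for $h\geq2$, expanding $A^2$ gives $d^2K_{\PHI}^2-2d(n-1)(h-1)u+(n-1)^2(h-1)^2v=-(n-1)(h-1)T$, while for $h=1$ the relation degenerates to $u=T/t$.

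Solving this $2\times2$ system is the crux. Its determinant equals $-(n-1)(h-1)t$, which is precisely where $t$ enters and explains its definition, so for $h\geq2$ one obtains
\[
u=\frac{T}{t}+\frac{d^2K_{\PHI}^2}{(n-1)(h-1)t}+\frac{(n-1)(h-1)}{t}S,\qquad v=S-u,
\]
and the $h=1$ case is recovered as the degenerate limit under the stated convention $K_{\PHI}^2/((n-1)(h-1))=0$. Substituting $(u,v)$ back and collecting the coefficients of $K_{\PHI}^2/((n-1)(h-1))$, of $T$, of $\AL_{0}$, and of each $\AL_{k}$ yields (\ref{1.17}); the identity $n(h-1)+d=g-1$ is what converts the raw coefficients into $x',y',z'$ and $a_{k}$. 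The equality (\ref{1.18}) follows in exactly the same way from (\ref{(1.11)}), the term $nt\chi_{\PHI}$ being untouched by the substitution.

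I expect the main obstacle to be twofold. The conceptual point is the proof of $T\geq0$: the Hodge index argument for $h\geq2$ and the nefness input for $h=1$. The remaining difficulty is bookkeeping, namely carrying out the substitution so that the coefficients match the asserted closed forms; in particular one must verify the nontrivial identity hidden behind $a_{k}=12\bar a_{k}-nt$, i.e.\ that $2(n-1)^2(g-1)k(nk-1)-nt((n-1)k-1)^2$ equals $k(n^2-1)\bigl(2(g-1)+n(h-1)((n-1)k-2)\bigr)-nt$. Throughout, the $h=1$ degeneracy must be tracked carefully so that both formulas specialize correctly under the convention on $K_{\PHI}^2/((n-1)(h-1))$.
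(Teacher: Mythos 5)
Your proposal is correct, and its computational core is the same as the paper's: both reduce (\ref{1.17}) and (\ref{1.18}) to expressing the quadratic expressions in (\ref{(1.10)}) and (\ref{(1.11)}) in terms of $\frac{K_{\PHI}^2}{(n-1)(h-1)}$, $T$ and $(K_{\PHI}+R).R$, and then substituting (\ref{(1.9)}). Two of your sub-arguments differ in a useful way. For $T\geq 0$ with $h\geq2$, you apply the index theorem directly to $A:=\bigl(g-1-n(h-1)\bigr)K_{\PHI}-(n-1)(h-1)R$: from $A.\GA=0$, $\GA^2=0$ and $\GA\not\equiv 0$ you conclude $A^2\leq 0$. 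The paper instead computes the determinant of the $3\times 3$ intersection matrix of $\{K_{\PHI},\D,\GA\}$ and must invoke Arakelov's theorem ($K_{\PHI}^2\geq0$) to exclude negative definiteness; your two-class version is shorter and does not need Arakelov. Second, where the paper verifies its key identity ``by a direct calculation'', you obtain it by eliminating $(K_{\PHI}.R,\,R^2)$ from the linear system given by (\ref{(1.9)}) and the definition of $T$; the determinant $(n-1)(h-1)t$ (up to sign) explains where $t$ comes from, and the remaining bookkeeping is exactly the polynomial identity behind $a_{k}=12\bar{a}_{k}-nt$, which does check out.

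Two points need tightening, both at $h=1$. Nefness of $K_{\PHI}$ is not a formal consequence of the words ``relative minimality'': for an elliptic fibration it follows from Kodaira's canonical bundle formula together with $\chi(\mathcal{O}_{W})\geq 0$, which is precisely what the paper writes out; the same formula gives $K_{\PHI}\equiv c\GA$ with $c\geq 0$, hence $K_{\PHI}^2=0$, which you need in order to discard the leftover term $nK_{\PHI}^2$ of (\ref{(1.10)}) when passing to (\ref{1.17}) --- ``degenerate limit'' is not by itself an argument, although the fact you need is at hand (note that no such term arises in (\ref{1.18}), since your $h=1$ relation $u=T/t$ involves no $K_{\PHI}^2$). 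Finally, $d:=g-1-n(h-1)>0$ is slightly too strong: from $r\geq 0$ one only gets $d\geq 0$; but $t=2d+(n-1)(h-1)$ is still positive, because $d=0$ forces $h\geq 2$ (otherwise $g=1$), whence $(n-1)(h-1)>0$.
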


\begin{proof}
We get $t>0$ from $r\geq 0$, $n\geq 2$ and $g\geq 2$.
We shall show that $T\geq0$.
If $h=1$, by the canonical bundle formula, we have
$$K_{\PHI}\equiv\chi(\mathcal{O}_{W})\GA + \sum_{i=1}^{l}\bigl(1-\frac{1}{k_{i}}\bigr)\GA$$
where $\{ k_{i}\mid i=1, \dots,l \}$ denotes the set of multiplicities of all multiple fibers of $\PHI$, $k_{i}\geq 2$.
Hence we get
\begin{align}
\numberwithin{equation}{section}
K_{\PHI}.R \geq \chi(\mathcal{O}_{W})\GA.R=\frac{2(g-1)}{n-1}\chi(\mathcal{O}_{W})
\label{1.19}
\end{align}
Since $W$ is an elliptic surface, we have $\chi(\mathcal{O}_{W})\geq0$ and, hence, $T\geq0$.
If $h\geq2$, we consider the intersection matrix 
\[
  \left(
    \begin{array}{ccc}
      K_{\PHI}^2 & K_{\PHI}.\D & K_{\PHI}.\GA \\
       K_{\PHI}.\D & \D^2 & \D.\GA \\
       K_{\PHI}.\GA & \D.\GA & 0
    \end{array}
  \right)
\]
for $\{K_{\PHI},\;\D,\;\GA \}$.
Since we have $K_{\PHI}^2\geq0$ by Arakelov's theorem, it is not negative definite. 
Hence its determinant is non negative by the Hodge index theorem, and we get
\begin{align}
\numberwithin{equation}{section}
2(K_{\PHI}.\D)(\D.\GA)(K_{\PHI}.\GA)-\D^2(K_{\PHI}.\GA)^2-(\D\GA)^2K_{\PHI}^2\geq 0.
\label{1.20}
\end{align}
Since 
\begin{align*}
\D.\GA&=\frac{r}{n}=\frac{2(g-1-n(h-1))}{n(n-1)},\;\;
K_{\PHI}.\GA=2(h-1),
\end{align*}
the inequality $(\ref{1.20})$ is equivalent to 
$$
2\bigl(g-1-n(h-1)\bigr)K_{\PHI}.\D-n(n-1)(h-1)\D^2
\geq \frac{1}{n(n-1)(h-1)}\bigl(g-1-n(h-1)\bigr)^2K_{\PHI}^2.
$$
So we get 
$$
0\geq \bigl((g-1-n(h-1))K_{\PHI}-(n-1)(h-1)R\bigr)^2
$$
and, hence, $T \geq 0$.

Now, by a direct calculation, one has 
$$n\bigl(K_{\PHI}^2+\frac{2(n-1)}{n}K_{\PHI}.R+(\frac{n-1}{n})^2R^2\bigr)
= x'\frac{K_{\PHI}^2}{(n-1)(h-1)}+y'T+z'(K_{\PHI}+R)R$$
and
$$n\chi_{\PHI}+\frac{n-1}{4}K_{\PHI}.R+\frac{(n-1)(2n-1)}{12n}R^2=
n\chi_{\PHI}+\bar{x}'\frac{K_{\PHI}^2}{(n-1)(h-1)}+\bar{y}'T+\bar{z}'(K_{\PHI}+R)R.$$
Hence we obtain from $(\ref{(1.10)})$ and $(\ref{(1.11)})$ that
\begin{align}
\numberwithin{equation}{section}
K_{f}^2\geq x'\frac{K_{\PHI}^2}{(n-1)(h-1)}+y'T+z'(K_{\PHI}+R)R-n\sum_{k\geq1}\bigl((n-1)k-1\bigr)^2\AL_{k}
\label{1.21}
\end{align}
and
\begin{align}
\numberwithin{equation}{section}
\chi_{f}=n\chi_{\PHI}+\bar{x}'\frac{K_{\PHI}^2}{(n-1)(h-1)}+\bar{y}'T+\bar{z}'(K_{\PHI}+R)R-\frac{n(n-1)}{12}\sum_{k\geq1}\bigl((2n-1)k^2-3k\bigr)\AL_{k}.
\label{1.22}
\end{align}
From ($\ref{(1.9)}$) and $(\ref{1.21})$, we get 
$$
tK_{f}^2\geq tx'\frac{K_{\PHI}^2}{(n-1)(h-1)}+ty'T+tz'\AL_{0}+
\sum_{k\geq1}\bigl(2(n-1)^2(g-1)k(nk-1)-nt((n-1)k-1)^2\bigr)\AL_{k}
$$
Since one sees 
$a_{k}=2(n-1)^2(g-1)k(nk-1)-nt((n-1)k-1)^2$, we obtain $(\ref{1.17})$.
Similarly, we obtain $(\ref{1.18}).$
\end{proof}

\section{Slope inequality for irregular cyclic covering fibrations.}

The purpose of this section is to show the slope inequalities for irregular cyclic 
covering fibrations of type $(g,h,n)$, $n\geq 3$.
We start things in a more general setting.

\begin{define}
\label{def2.1}
Let $\tilde{\theta}:\tilde{S} \to \tilde{W} $ be a finite Galois cover $($not necessarily primitive cyclic$)$ between smooth projective varieties with Galois group  $G$.
Let $\alpha:\ST \to \Alb(\ST)$  be the Albanese map. 
For any $\SigmaT \in G$,
we denote by $\alpha({\tilde{\sigma}}):\Alb(\ST)\to \Alb(\ST)$ the morphism 
induced from $\SigmaT:\ST\to \ST$ by the universality of the Albanese map.
We put 
$$
\Alb_{\SigmaT}(\ST):=\Image \{\alpha(\SigmaT)-1:\Alb(\ST)\to \Alb(\ST)\}
$$
and let 
$$
\alpha_{\SigmaT}:\ST\to \Alb_{\SigmaT}(\ST)
$$
be the morphism defined by $\alpha_{\SigmaT}:=(\alpha(\SigmaT)-1) \circ \alpha$.
\end{define}

The following is due to Makoto Enokizono.

\begin{prop}
\label{prop2.2}
Suppose that $G$ is a cyclic group generated by $\SigmaT$ in the above situation. 
If $q_{\widetilde{\theta}}:=q(\widetilde{S})-q(\widetilde{W})>0$, then the following hold.

\smallskip

$(1)$ $\dim \Alb_{\SigmaT}(\ST)=q_{\widetilde{\theta}}$. 

\smallskip
 
$(2)$ If $\Fix(G):=\{x\in\ST \mid \SigmaT(x)=x \}\neq \emptyset$, then it is contracted  by $\alpha_{\SigmaT}$ to the unit element $0 \in \Alb_{\SigmaT}(\ST)$.

\smallskip

$(3)$ If $\alpha_{\SigmaT}(\ST)$ is a curve, then the geometric genus of 
$\alpha_{\SigmaT}(\ST)$ is not less than $q_{\tilde{\theta}}$.
\end{prop}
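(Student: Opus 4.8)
The plan is to use the Hodge-theoretic model $\Alb(\ST)=H^0(\ST,\Omega^1_{\ST})^{*}/H_1(\ST,\Z)$, on which $\SigmaT$ acts functorially; I take $\alpha(\SigmaT)$ to be the homomorphism induced by this action, so that $\Alb_{\SigmaT}(\ST)=\Image(\alpha(\SigmaT)-1)$ is an abelian subvariety with $0$ as unit. Because $G$ is finite the action on $H^0(\ST,\Omega^1_{\ST})$ is semisimple, and because $\WT=\ST/G$ the invariant subspace $H^0(\ST,\Omega^1_{\ST})^{G}$ is canonically identified with $H^0(\WT,\Omega^1_{\WT})$; hence the $1$-eigenspace of $\SigmaT$ on $H^0(\ST,\Omega^1_{\ST})$ has dimension $q(\WT)$. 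For $(1)$ I would then compute $\dim\Alb_{\SigmaT}(\ST)$ as the rank of the differential of $\alpha(\SigmaT)-1$ at the origin. This differential is $d\alpha(\SigmaT)-\mathrm{id}$ acting on the tangent space $H^0(\ST,\Omega^1_{\ST})^{*}$, whose kernel is the fixed subspace of $d\alpha(\SigmaT)$, dual to the $1$-eigenspace above and so of dimension $q(\WT)$; since the image of a homomorphism of abelian varieties has dimension equal to the rank of its differential at $0$, we obtain $\dim\Alb_{\SigmaT}(\ST)=q(\ST)-q(\WT)=q_{\tilde{\theta}}$.

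For $(2)$ the point is to kill the translation ambiguity in the Albanese map by choosing the base point inside $\Fix(G)$. With $x_0\in\Fix(G)$ as base point we have $\alpha(x_0)=0$, and evaluating the commutation relation $\alpha\circ\SigmaT=\alpha(\SigmaT)\circ\alpha+c$ at $x_0$ forces the constant $c$ to vanish. Then for every $x\in\Fix(G)$ one has $\alpha(x)=\alpha(\SigmaT(x))=\alpha(\SigmaT)(\alpha(x))$, so $\alpha(x)\in\Ker(\alpha(\SigmaT)-1)$ and therefore $\alpha_{\SigmaT}(x)=(\alpha(\SigmaT)-1)(\alpha(x))=0$, which is the assertion.

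For $(3)$ I would first note that $\alpha_{\SigmaT}(\ST)$ generates $\Alb_{\SigmaT}(\ST)$: the Albanese image $\alpha(\ST)$ generates $\Alb(\ST)$, and the surjective homomorphism $\alpha(\SigmaT)-1\colon\Alb(\ST)\to\Alb_{\SigmaT}(\ST)$ carries a generating set to a generating set. Writing $C=\alpha_{\SigmaT}(\ST)$ and letting $\nu\colon\tilde C\to C$ be its normalization, the universal property of the Albanese variety of the curve $\tilde C$ factors the composite $\tilde C\to\Alb_{\SigmaT}(\ST)$ through the Jacobian $J(\tilde C)$. The resulting homomorphism $J(\tilde C)\to\Alb_{\SigmaT}(\ST)$ has image an abelian subvariety containing $C$, hence is surjective; since a surjection of abelian varieties cannot raise dimension, the geometric genus satisfies $g(\tilde C)=\dim J(\tilde C)\geq\dim\Alb_{\SigmaT}(\ST)=q_{\tilde{\theta}}$.

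The step I expect to demand the most care is the bookkeeping around the functoriality of the Albanese map. The induced endomorphism $\alpha(\SigmaT)$ is canonical as a homomorphism only after fixing the Hodge-theoretic model, and the clean identity $\alpha\circ\SigmaT=\alpha(\SigmaT)\circ\alpha$ holds without a translation term precisely when the base point lies in $\Fix(G)$; keeping this normalization consistent—so that it validates $(2)$ while not interfering with the dimension count in $(1)$ or the generation argument in $(3)$—is the delicate part. The identification $H^0(\ST,\Omega^1_{\ST})^{G}\cong H^0(\WT,\Omega^1_{\WT})$, though standard for quotients by a finite group, also deserves a careful statement, since it underlies the entire dimension computation.
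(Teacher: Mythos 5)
Your proposal is correct and follows essentially the same route as the paper: part (1) via the identification $H^0(\ST,\Omega^1_{\ST})^G \cong H^0(\WT,\Omega^1_{\WT})$ and a rank count (you work on tangent spaces, the paper on pullbacks of $1$-forms, which is the same computation), part (2) by taking the Albanese base point inside $\Fix(G)$ so that the translation term vanishes, and part (3) by the universality of the Abel--Jacobi map for the normalization of the image curve. Your write-up is in fact somewhat more explicit than the paper's---notably the generation argument ensuring surjectivity of $J(\tilde{C})\to\Alb_{\SigmaT}(\ST)$ in (3) and the bookkeeping of the translation constant---but the substance is identical.
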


\begin{proof}
Firstly, we show (1). 
By the construction of $\alpha(\SigmaT)-1:\Alb(\ST)\to \Alb(\ST)$, we get the following commutative diagram:
\begin{equation}
\nonumber
 \vcenter{ \xymatrix{ 
 H^0(\Alb(\ST),\Omega_{\Alb(\ST)}^1) \ar[r]^{(\AL(\SigmaT)-1)^{\ast}} \ar[d]_{\AL^{\ast}} 
 & H^0(\Alb(\ST),\Omega_{\Alb(\ST)}^1) \ar[d]^{\AL^{\ast}} \\
 H^0(\ST,\Omega_{\ST}^1) \ar[r]_{\SigmaT^{\ast}-1} 
 & H^0(\ST,\Omega_{\ST}^1).} } 
 \end{equation}
Since $\alpha^*$ is an isomorphism, we have $\dim \Ker(\alpha(\SigmaT)-1)^{\ast} = \dim \Ker(\SigmaT^{\ast}-1)$. 
Since $G$ is a cyclic group generated by $\SigmaT$, we see that 
$\Ker(\SigmaT^{\ast}-1)$ coincides with 
the $G$-invariant part $H^0(\ST,\Omega^1_{\ST})^{G}$ of ${H}^0(\ST,\Omega^1_{\ST})$.
On the other hand, since  
$\tilde{\theta}^{\ast}:\mathrm{H}^0(\WT,\Omega^1_{\WT})\to \mathrm{H}^0(\ST,\Omega^1_{\ST})^{G}$
is an isomorphism, we have $\dim(\Ker(\alpha(\SigmaT)-1)^{\ast} )= q(\WT)$ and, hence,
$$
\dim(\Image(\alpha(\SigmaT)-1)^{\ast}) = q(\ST)-q(\WT).
$$ 
It follows that $\Alb_{\SigmaT}(\ST)$ is of dimension $q_{\widetilde{\theta}}$.

Secondly, we show (2).
We take a point $x_{0}$ in $\Fix(G)$ as the base point of the Albanese map $\alpha:\ST \to \Alb(\ST).$ 
Let $x \in \Fix(G)$. 
Note that we have
$$
\alpha_{\SigmaT}(x)=(\alpha(\SigmaT)-1)(\alpha(x))=\alpha(\SigmaT)(\alpha(x))-\alpha(x)
$$
and that $\alpha(\SigmaT)(\alpha(x))-\alpha(x)$ is the function 
given by $\omega \mapsto \int_{x_{0}}^{x}\SigmaT^ {\ast}\omega - \int_{x_{0}}^{x}\omega $ for $\omega \in \mathrm{H}^0(\ST,\Omega^1_{\ST})$ modulo 
periods. 
Since $x$ and $x_{0} $ are both in $\Fix(G)$, we find 
$$
\int_{x_{0}}^{x}\SigmaT^ {\ast}\omega - \int_{x_{0}}^{x}\omega=\int_{\SigmaT(x_{0})}^{\SigmaT(x)}\omega - \int_{x_{0}}^{x}\omega=0.
$$
Hence we get $\alpha_{\SigmaT}(x)=\alpha(\SigmaT)(\alpha(x))-\alpha(x)=0$.
Since $x$ can be taken arbitrarily in $\Fix(G)$, we get (2).

When $\alpha_{\SigmaT}(\ST)$ is a curve, one can check easily that the geometric genus of $\alpha_{\SigmaT}(\ST)$ is not less than $q_{\tilde{\theta}}$,  
by (1) and the universality of the Abel-Jacobi map for the normalization of $\alpha_{\SigmaT}(\ST)$.
Hence (3).
 \end{proof}

\subsection{The slope inequality in the case of $h=0$}

We consider the primitive cyclic covering fibration $f:S \to B$ of type $(g,0,n)$ with $q_{f}>0$. 
Since $\PHI:W \to B$ is a ruled surface, we have $q(W)=b$ and it follows $q_{\tilde{\theta}}=q_{f}$.
We apply Proposition~\ref{prop2.2} to the cyclic covering $\tilde{\theta}:\ST \to \WT$ to find that 
its ramification divisor $\Fix (G)$ is contracted to a point by $\AL_{\SigmaT}: \ST\to \Alb_{\SigmaT}(\ST)$, 
where $\SigmaT$ denotes a generator of the Galois group $G:=\mathrm{Gal}(\ST/\WT)$.
So if $\alpha_{\SigmaT}(\ST)$ is a surface (resp. a curve), from Mumford's theorem (resp. Zariski's Lemma), 
the intersection form is negative definite (resp. semi-definite) on $\Fix (G)$, and we in particular get
$$
\Fix (G)^2 < 0\quad(\textrm{resp.}\leq0).
$$
Hence, in any way, we have  
\begin{align}
\RT^2 \leq 0,
\label{(2.1)}
\end{align}
since $\tilde{\theta}^*\RT=n\Fix (G)$.

Here, we remark the following.

\begin{lem}\label{lem_added}
Let $f:S\to B$ be a primitive cyclic covering of type $(g,0,n)$. 
If it is not locally trivial and $q_f>0$, then $r\geq 2n$.
\end{lem}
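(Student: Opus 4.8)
The plan is to reduce the assertion to excluding the single value $r=n$. Since $g\geq 2$, the quantity $r=\frac{2(g-1+n)}{n-1}$ is strictly positive, and it is a multiple of $n$ as observed right after its definition; hence $r\in\{n,2n,3n,\dots\}$ and it suffices to prove $r\neq n$. I would first record that $r=n$ is equivalent to $g=\frac{(n-1)(n-2)}{2}$, so that the constraint $g\geq 2$ forces $n\geq 4$. This will matter only at the very end, to fix the sign of a coefficient.

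Assume now $r=n$, towards a contradiction. The crucial preliminary step is that every higher singularity index vanishes. Indeed, by Corollary~\ref{lem1.4'} any singular point of $R$ contributing to $\alpha_k$ with $k\geq 1$ would have to satisfy $nk\leq \frac{r}{2}=\frac{n}{2}$, which is impossible. Thus $\alpha_k=0$ for all $k\geq 1$, so that $N=\sum_{k\geq 1}\alpha_k=0$; the contraction $\WT\to W$ is then an isomorphism and $\RT=R$. Consequently the sums over $k$ in Proposition~\ref{prop1.2} are empty, and both $(\ref{(1.13)})$ and $(\ref{(1.16)})$ collapse to relations involving only $\chi_f$, $\alpha_0$ and the half-integer $M_0$.

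The contradiction is then produced by combining these reduced identities with the negativity coming from $q_f>0$. Since $\RT=R$ and $R^2=2rM_0=2nM_0$ by $(\ref{(1.15)})$, the inequality $\RT^2\leq 0$ recorded in $(\ref{(2.1)})$ yields $M_0\leq 0$. On the other hand, $(\ref{(1.16)})$ with the empty sum gives $\alpha_0=2(n-1)M_0$, and $(\ref{(1.13)})$ with the empty sum gives $(n-1)\chi_f=\frac{(n-2)(n-1)}{6}\alpha_0$, whence $\chi_f=\frac{(n-1)(n-2)}{3}M_0$. Because $n\geq 4$ we have $(n-1)(n-2)>0$, so $\chi_f\leq 0$. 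By Ueno's theorem $\chi_f\geq 0$, with equality precisely when $f$ is locally trivial; therefore $\chi_f=0$ and $f$ is locally trivial, contradicting the hypothesis. Hence $r\neq n$, and since $r$ is a positive multiple of $n$ we conclude $r\geq 2n$.

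The only delicate points are the vanishing of the higher singularity indices when $r=n$, which is what makes the geometry completely explicit, and the bookkeeping of the sign of $\frac{(n-1)(n-2)}{3}$, which is why confirming $n\geq 4$ at the outset is essential; everything else is a direct substitution into the formulae already established in Section~2. The conceptual heart is that, once $r$ is as small as $n$, the negativity $\RT^2\leq 0$ forced by $q_f>0$ is simply incompatible with the nonnegativity of $\chi_f$ unless the fibration degenerates to a locally trivial one.
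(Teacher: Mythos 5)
Your proof is correct, but its second half takes a genuinely different route from the paper's. Both arguments start identically: once $r$ is known to be a positive multiple of $n$, the case $r<2n$ means $r=n$, and then every singular point of $R$ is killed (you via Corollary~\ref{lem1.4'}, the paper via Lemmas~\ref{lem1.2} and \ref{lem1.4}, which is the same content), so $\alpha_k=0$ for all $k\geq 1$, $N=0$, $\widetilde{R}=R$ is smooth, and (\ref{(2.1)}) applies directly to $R$. From there the paper argues geometrically: it expresses each irreducible component of $R$ as $aC_0+b\Gamma$ in the N\'eron--Severi group of the ruled surface $W$, combines component-wise non-positivity of self-intersections (coming from the negative semi-definiteness on $\Fix(G)$) with adjunction $(K_\varphi+C)C\geq 0$, and classifies the possibilities --- fibers, the minimal section, or disjoint multisections unramified over $B$ --- ruling each out as giving $g=0$, $r=1$, or local triviality. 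You instead stay purely numerical: with the sums empty, (\ref{(1.16)}) and (\ref{(1.13)}) collapse to $\alpha_0=2(n-1)M_0$ and $\chi_f=\frac{(n-1)(n-2)}{3}M_0$, while $R^2=2rM_0\leq 0$ from (\ref{(1.15)}) and (\ref{(2.1)}) gives $M_0\leq 0$, so Ueno's theorem forces $\chi_f=0$ and hence local triviality, a contradiction. Your bookkeeping checks out: at $r=n$ the coefficient of $\alpha_0$ in (\ref{(1.13)}) is indeed $\frac{(n-1)(n-2)}{6}$, the constraint $g\geq 2$ does force $n\geq 4$ so the sign is positive, and your use of Corollary~\ref{lem1.4'} for points contributing to $\alpha_k$ (including infinitely near ones) is exactly how the paper itself uses it in Proposition~\ref{prop1.2}. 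What your route buys is brevity and economy of hypotheses --- you only need the global inequality $\widetilde{R}^2\leq 0$, not component-wise negativity, and no case analysis. What the paper's route buys is the explicit classification of irreducible curves on $W$ with non-positive self-intersection, which is not throwaway: that classification is invoked again verbatim in the proof of Lemma~\ref{prop3.3} (``as one sees from the proof of Lemma~\ref{lem_added}\dots''), so if one substituted your proof for the paper's, that later argument would need to be reworked or the classification re-proved separately.
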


\begin{proof}
We assume that $r<2n$ and show that this leads us to a contradiction.

Recall that $r$ is a multiple of $n$.
If $r<2n$, then $r\leq n$ and $R$ has to be smooth by Lemmas~\ref{lem1.2} and \ref{lem1.4}.
On the other hand, since $q_f>0$, we already know from (\ref{(2.1)}) that the self-intersection number of any irreducible component $C$ of $R$ is 
non-positive.

Let $C_0$ be the minimal section with $C_0^2=-e$ and $\Gamma$ a fiber of $\varphi:W\to B$.
Note that we can choose the normalized vector bundle of rank $2$ associated with $W$ so that there are no effective divisor numerically equivalent to 
$C_0-c\Gamma$ for any positive integer $c$.
Put $C\equiv aC_0+b\Gamma$ with two integers $a$, $b$. We have $a\geq 0$.
If $a=0$, then we have $b=1$, that is, $C$ is a single fiber by its irreducibility. 
So we may assume that $a$ is positive.

We have $C^2=a(2b-ae)\leq 0$.
Hence $2b\leq ae$.
Furthermore, since $C$ is irreducible and $a>0$, we have $(K_\varphi+C)C\geq 0$ by the Hurwitz formula applied for 
the normalization of $C$. Since $K_\varphi\equiv -2C_0-e\Gamma$, we have
$$
0 \leq (K_\varphi+C)C =(a-1)(2b-ae)\leq 0,
$$
by $2b\leq ae$ and $a\geq 1$.
Hence we get $(K_\varphi+C)C=0$ and, either $a=1$ or $2b=ae$.
In particular, as the first equality shows, $C$ is smooth and $\varphi|_C:C\to B$ is unramified.
Furthermore, we get $C^2=0$ when $a\geq 2$ by $2b=ae$.
In this case, we also have $b\leq 0$, because $0\leq CC_0=b-ae=-b$.
If $a=1$ and $2b<e$, then $b\geq 0$ and it follows from $CC_0=b-e<-b\leq 0$ that we have $C=C_0$ by the irreducibility of $C$. 
We remark here that we have $(a_1C_0+b_1\Gamma)(a_2C_0+b_2\Gamma)=0$ when $a_i>0$ and $2b_i=a_ie$ for $i=1,2$.

In summary, the only possibilities left for smooth $R$ are (i) $R$ consists of several fibers (including the case $R=0$), 
(ii) $R$ is the minimal section with $R^2<0$, and 
(iii) $R$ consists of several smooth curves with self-intersection numbers $0$ which are unramified over $B$ (via $\varphi$).
If (i) or (ii) is the case, then we have either $g=0$ or $r=1$, any of which is absurd.
If (iii) is the case, then $f:S\to B$ is a locally trivial fibration, which is again inadequate.
\end{proof}

From $\RT^2 = R^2 - \sum_{k \geq 1} n^2k^2 \alpha_{k}$, (\ref{(1.15)}) and (\ref{(2.1)}), we get
$$
2rM_0 \leq  \sum_{k \geq 1}^{nk\leq \frac{r}{2}} n^2k^2 \alpha_{k}.
$$
Hence from this and (\ref{(1.16)}), we get 
\begin{align}
\alpha_{0} \leq \sum_{k \geq 1}^{nk\leq \frac{r}{2}} \frac{nk}{r}(r-1-(nk-1))\alpha_{k}.
\label{(2.2)}
\end{align}

\begin{thm}
\label{prop2.3}
Let $f:S \to B$ be a primitive cyclic covering fibration of type $(g,0,n)$ which is not locally trivial and $q_{f}>0$. 
Then 
$$
\lambda_{f} \geq \lambda_{g,n}^1:= 8- \frac{8(g+n-1)}{(n-1)(2g-(n-1)(n-2))}\biggl(=8-\frac{4r}{(n-1)(r-n)}\biggr).
$$
\end{thm}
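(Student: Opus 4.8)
The plan is to prove the equivalent inequality $K_{f}^2-\lambda\chi_{f}\geq 0$ for $\lambda:=\lambda_{g,n}^1$. This suffices: since $f$ is not locally trivial, Ueno's theorem gives $\chi_{f}>0$, so dividing by $\chi_{f}$ turns $K_{f}^2-\lambda\chi_{f}\geq 0$ into $\lambda_{f}\geq\lambda$. First I would combine the two formulae of Proposition~\ref{prop1.2}. Multiplying the target by $r-1>0$ and subtracting $\lambda$ times $(\ref{(1.13)})$ from $(\ref{(1.12)})$ expresses
$$
(r-1)(K_{f}^2-\lambda\chi_{f})\geq c_{0}\,\alpha_{0}+\sum_{k\geq 1}^{nk\leq r/2}c_{k}\,\alpha_{k},
$$
where $c_{0}$ and $c_{k}$ are explicit rational functions of $r$, $n$, $\lambda$. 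Writing $P_{k}:=nk(r-nk)$ (note $r-1-(nk-1)=r-nk$), one finds $c_{k}=\frac{n^2-1}{n}P_{k}\bigl(1-\tfrac{\lambda}{12}\bigr)-(r-1)n$ and $c_{0}=\frac{n-1}{n}\bigl[r(n-1)-2n-\tfrac{\lambda}{12}(r(2n-1)-3n)\bigr]$.

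The second step is to record the sign of the coefficient $c_{0}$ of the hard index $\alpha_{0}$. A direct simplification, using $r\geq 2n$ from Lemma~\ref{lem_added}, shows $c_{0}\leq 0$: the bracket above reduces to a positive multiple of $(n-2)(n+2-r)$, which is nonpositive in our range since $r\geq 2n>n+2$. This sign is exactly what lets me feed in the negativity coming from $q_{f}>0$. Indeed $(\ref{(2.2)})$ bounds $\alpha_{0}$ from above by $\sum_{k}\frac{nk(r-nk)}{r}\alpha_{k}=\sum_{k}\frac{P_{k}}{r}\alpha_{k}$, so multiplying this by the nonpositive $c_{0}$ reverses the inequality and yields
$$
(r-1)(K_{f}^2-\lambda\chi_{f})\geq\sum_{k\geq 1}^{nk\leq r/2}\Bigl(c_{k}+\tfrac{c_{0}}{r}P_{k}\Bigr)\alpha_{k}=:\sum_{k\geq 1}^{nk\leq r/2}c_{k}'\,\alpha_{k}.
$$

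Finally I would show $c_{k}'\geq 0$ for every $k$ with $nk\leq r/2$. Collecting terms gives $c_{k}'=A\,P_{k}-(r-1)n$ with $A:=\frac{n^2-1}{n}\bigl(1-\tfrac{\lambda}{12}\bigr)+\frac{c_{0}}{r}$, and a short computation shows that the prescribed value $\lambda=\lambda_{g,n}^1$ is precisely the one making $A=\frac{r-1}{r-n}$, equivalently $c_{1}'=A\cdot n(r-n)-(r-1)n=0$. Since $A>0$ and $P_{k}=nk(r-nk)$ is increasing in $k$ on the range $nk\leq r/2$, we have $P_{k}\geq P_{1}=n(r-n)$, whence $c_{k}'\geq c_{1}'=0$ for all relevant $k$. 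As each $\alpha_{k}\geq 0$, the right-hand side is nonnegative, giving $K_{f}^2\geq\lambda\chi_{f}$ and hence the theorem; the vanishing $c_{1}'=0$ reflects that this linear combination is sharp at $\lambda_{g,n}^1$. The main obstacle is bookkeeping rather than conceptual: verifying the sign $c_{0}\leq 0$ and the identity $A=\frac{r-1}{r-n}$, i.e.\ confirming that $\lambda_{g,n}^1$ is exactly the threshold at which the worst coefficient $c_{1}'$ turns nonnegative.
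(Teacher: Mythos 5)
Your proposal is correct and takes essentially the same route as the paper: your $c_{0}$ is exactly the paper's auxiliary quantity $A(\lambda)$, your factorization of its bracket as a positive multiple of $(n-2)(n+2-r)$ matches the paper's sign check (the paper's equivalent expression $(r-n)(-(n-1)^2+2)+2n^2-3n-r$ also equals $n(n-2)(n+2-r)$), and absorbing $\alpha_{0}$ via $(\ref{(2.2)})$ and then using $nk(r-nk)\geq n(r-n)$ together with the vanishing of the combined coefficient at $k=1$ is precisely the paper's argument. The only trivial slip is that for $n=2$ one has $2n=n+2$ rather than $2n>n+2$, but then $n-2=0$ so $c_{0}=0\leq 0$ still holds and nothing breaks.
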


\begin{proof}
For $\lambda \in \mathbb{R}$, we put 
$$
A(\lambda):= \frac{n-1}{n}\biggl((r-2)n-r-\lambda\frac{(2r-3)n-r}{12}\biggr).
$$
From Proposition $\ref{prop1.2}$, we get 
$$
(r-1)(K_{f}^2-\lambda_{g,n}^1\chi_{f})\geq A(\lambda_{g,n}^1)\alpha_{0} 
+ \sum_{k\geq1}^{nk\leq \frac{r}{2}}a_{k}\alpha_{k}-\sum_{k\geq1}^{nk\leq \frac{r}{2}}\lambda_{g,n}^1\bar{a}_{k}\alpha_{k}
$$
where 
$$
a_{k}:=(n^2-1)k(r-1-(nk-1))-(r-1)n,
$$
$$
\bar{a}_{k}:=\frac{n^2-1}{12}k(r-1-(nk-1)).
$$
We can check that $A(\lambda_{g,n}^1)\leq 0$ as follows. 
Since $r\geq 2n$ by Lemma~\ref{lem_added}, a calculation shows that the inequality 
$$
A(\lambda_{g,n}^1)= \frac{n-1}{n}\biggl((r-2)n-r-\bigl(8-\frac{4r}{(n-1)(r-n)}\bigr)\frac{(2r-3)n-r}{12}\biggr) \leq0
$$
is equivalent to 
$$
(r-n)(-(n-1)^2+2)+2n^2-3n-r\leq 0,
$$ 
and we can check easily its validity.
Therefore $A(\lambda_{g,n}^1) \leq 0$.
Hence from $(\ref{(2.2)})$, we get 
\begin{align}
(r-1)(K_{f}^2-\lambda_{g,n}^1\chi_{f})\geq \sum_{k \geq1}^{nk\leq\frac{r}{2}}\biggl(\frac{(n-1)(r-1)}{4r}(8-\lambda_{g,n}^1)nk(r-nk)-(r-1)n\biggr)\alpha_{k}.
\label{2.2'}
\end{align}
For any integer $k$ satisfying $\frac{r}{2n} \geq k \geq1$, we have $nk(r-nk)\geq n(r-n)$.
Since we have
\begin{align*}
\frac{(n-1)(r-1)}{4r}(8-\lambda_{g,n}^1)n(r-n)-(r-1)n=0,
\end{align*}
the coefficient of $\AL_{k}$ in the right hand side of $(\ref{2.2'})$ is not negative.
Therefore, we get $K_{f}^2-\lambda_{g,n}^1\chi_{f}\geq 0$ as desired.
\end{proof}

\subsection{The slope inequality in the case of $h\geq 1$.}
Before showing the slope inequality when $h\geq1$ and $n\geq 3$,
we study the upper bound of $\AL_{0}$.

Recall that we decomposed $\psi$ into a succession of blowing-ups $\psi_{i}$ as,
$$
\psi:\WT=W_{N} \overset{\psi_{N}}{\to} W_{N-1} \to \cdots \to W_1
\overset{\psi_{0}}{\to}W_{0} = W
$$
We define the order of blowing-up $\psi'$ appearing in $\psi$ as follows.
If the center of $\psi'$ is a point on the branch locus of multiplicity $m'$, we put  
$$
\Ord(\psi'):=\biggl[ \frac{m'}{n} \biggr].
$$
Moreover we introduce a partial order on these blowing-ups $\psi'$ and $\psi''$ appearing in $\psi$,
$$
\psi' \geq \psi''
\overset{\mathrm{def}}{\Longleftrightarrow}
\Ord(\psi')\geq \Ord(\psi'').
$$

\begin{lem}
\label{lem2.4}
Assume that $n\geq 3$.
Let $x_{j}$ $( \in R_{j} \subset W_{j})$ be a singular point infinitely near to $x_{i} \in R_{i}$.
Then the multiplicities satisfy $m_{j} \leq m_{i}$.  
\end{lem}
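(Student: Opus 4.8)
The plan is to reduce the statement to the case in which $x_j$ lies in the \emph{first} infinitesimal neighbourhood of $x_i$ and then to compose the resulting inequalities along the chain of blowing-ups joining $x_i$ to $x_j$. Thinking of the infinitely near points of $R$ over $x_i$ as the nodes of the usual resolution tree, with an edge joining a centre to each centre in its first neighbourhood, it suffices to prove the inequality across a single edge: if $x_{i'}$ is a centre lying on $E_i$ (the exceptional curve of $\psi_i$), then $m_{i'}\le m_i$. Here everything is local around $E_i$, so the intervening blowing-ups performed elsewhere in the tower play no role. By Lemma~\ref{lem1.2}(2) we may write
$$
R_i=\psi_i^{\ast}R_{i-1}-n\Bigl[\tfrac{m_i}{n}\Bigr]E_i=\widehat{R}_{i-1}+\epsilon_i E_i,\qquad
\epsilon_i:=m_i-n\Bigl[\tfrac{m_i}{n}\Bigr]\in\{0,1\},
$$
where $\widehat{R}_{i-1}:=\psi_i^{\ast}R_{i-1}-m_iE_i$ is the genuine strict transform and, by Lemma~\ref{lem1.2}(1), $\epsilon_i=0$ precisely when $E_i\not\subset R_i$. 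Since $E_i$ is smooth, taking multiplicities at $x_{i'}\in E_i$ gives $m_{i'}=\mu+\epsilon_i$, where $\mu:=\mathrm{mult}_{x_{i'}}\widehat{R}_{i-1}$.

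First I would record the classical bound $\mu\le m_i$: the multiplicity of a strict transform never exceeds that of the curve below it. If $\epsilon_i=0$ this already yields $m_{i'}=\mu\le m_i$, and we are done. The decisive case is $\epsilon_i=1$, i.e. $m_i=n[\tfrac{m_i}{n}]+1$, and this is exactly where the hypothesis $n\ge3$ is used. Suppose, to the contrary, that $m_{i'}>m_i$. Then $\mu=m_{i'}-1>m_i-1=n[\tfrac{m_i}{n}]$, so $\mu\ge n[\tfrac{m_i}{n}]+1=m_i$; together with $\mu\le m_i$ this forces $\mu=m_i$ and hence $m_{i'}=m_i+1=n[\tfrac{m_i}{n}]+2$. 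But $x_{i'}$ is a centre of the resolution, so by Lemma~\ref{lem1.2}(1) its multiplicity $m_{i'}$ must lie in $n\Z$ or $n\Z+1$, whereas $m_{i'}\equiv 2\pmod n$. As $n\ge3$, we have $2\not\equiv 0,1\pmod n$, a contradiction. Therefore $\mu\le m_i-1$ and $m_{i'}=\mu+1\le m_i$, completing the first-neighbourhood case.

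To conclude, I would propagate the inequality along the path from $x_i$ to $x_j$ in the resolution tree: each node on this path is a singular point of the corresponding strict transform (a singular point can only be infinitely near a singular point), hence is itself one of the blown-up centres, so the first-neighbourhood estimate applies at every edge and the bounds compose to give $m_j\le m_i$. I expect the main obstacle to be not the idea but the bookkeeping: one must phrase the reduction so that the single-edge computation is genuinely local to $E_i$—so that blowing-ups carried out elsewhere do not alter the multiplicity at $x_{i'}$—and one must confirm that every intermediate point of the chain is a bona fide centre, so that the congruence constraint of Lemma~\ref{lem1.2}(1) is available for it. Once these local reductions are in place, the entire weight of the argument rests on the single congruence observation that, for $n\ge3$, a multiplicity congruent to $2$ modulo $n$ cannot occur; this is precisely the point that breaks down for $n=2$.
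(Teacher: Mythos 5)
Your proof is correct and follows essentially the same route as the paper's: reduce to the first infinitesimal neighbourhood, split according to whether $m_i\in n\Z$ (exceptional curve not in $R_i$, so $R_{i+1}=\widehat{R}_i$ and the classical strict-transform bound suffices) or $m_i\in n\Z+1$ (so $R_{i+1}=\widehat{R}_i+E_{i+1}$ and $m_{i+1}\leq m_i+1$), and then rule out $m_{i+1}=m_i+1\in n\Z+2$ via the congruence constraint of Lemma~\ref{lem1.2}(1) and $n\geq 3$. The only difference is that you spell out the bookkeeping (the $\epsilon_i$ decomposition, locality of the computation, and the propagation along the chain of infinitely near points) that the paper leaves implicit.
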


\begin{proof}
Though this can be found in \cite{Eno}, Lemma~3.7, when $n=0$, we shall give a proof 
for the convenience of readers.

Let $x_{i+1}$ be the singular point of $R_{i+1}$ infinitely near to $x_{i} \in R_{i}$.
If $m_{i} \in n\Z$, then $R_{i+1}$ coincied with $\widehat{R_{i}}$, 
the proper transform of $R_{i}$ by $\psi_{i+1}$, by Lemma $\ref{lem1.2}$.
Hence $m_{i+1}\leq m_{i}$ in this case.
If $m_{i} \in n\Z$+1, then $R_{i+1}=\widehat{R_{i}}+E_{i+1}$.
Hence we get $m_{i+1}\leq m_{i}+1 \in n\Z+2$. 
From Lemma $\ref{lem1.2}$ and the assumption $n\geq 3$, we get $m_{i+1}\leq m_{i}$.
\end{proof}

From Lemma~\ref{lem2.4}, we can reorder those blowing-ups appearing in $\psi$ 
so that
$\psi_{i}\geq\psi_{j}$ holds whenever $i<j$.
We put, 
$$
M:=\mathrm{max}\{\Ord(\psi') \mid \text{$\psi'$ is a blowing up in $\psi$} \}.
$$
Then we can decompose $\psi$ as
$$
\psi:\WT=\widehat{W}_{M} \overset{\hat{\psi}_{M}}{\to} \widehat{W}_{M-1} 
\quad \cdots \quad\overset{\hat{\psi}_{0}}{\to} \WH_{0} = W
$$
in such a way that $\Ord(\psi')=M+1-i$ holds for any $\psi'$ appearing in $\hat{\psi}_{i}$.

\begin{lem}
\label{lem2.5}
Let $\psi'$ be a blowing-up appearing in $\hat{\psi}_{i}$ and $\widetilde{D}$   
the proper inverse image of the exceptional curve of $\psi'$ on $\ST$.
Then the geometric genus of $\widetilde{D}$ satisfies
$$
g(\widetilde{D})\leq \frac{(n-1)(n(M-i)+n-2)}{2}.
$$
\end{lem}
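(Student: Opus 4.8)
The plan is to restrict the covering $\tilde{\theta}$ to the proper transform $D$ of the exceptional curve of $\psi'$ on $\WT$ and read off the genus of $\widetilde{D}$ from the Hurwitz formula, controlling the number of branch points by an intersection-number computation. Since $\psi'$ appears in $\hat{\psi}_i$, its center is a point of the branch locus of multiplicity $m'$ with $[m'/n]=M+1-i$; by Lemma~\ref{lem1.2}(1) this forces $m'=n(M+1-i)$ or $m'=n(M+1-i)+1$, the latter occurring exactly when the exceptional curve $E'$ of $\psi'$ is a component of the branch locus. The curve $E'$ is a smooth rational curve, and since the further blow-ups needed to reach $\WT$ are centred at points, its proper transform $D$ stays smooth and rational.

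First I would dispose of the case $m'=n(M+1-i)+1$. Here $E'$ is a component of the branch locus, so $D$ is one of the (smooth, pairwise disjoint) components of $\RT$, over which $\tilde{\theta}$ is totally ramified; hence $\widetilde{D}=\tilde{\theta}^{-1}(D)$ maps isomorphically onto $D\cong\PROJ^1$ and $g(\widetilde{D})=0$. As $n\geq3$ and $i\leq M$, the right-hand side satisfies $\frac{(n-1)(n(M-i)+n-2)}{2}\geq\frac{(n-1)(n-2)}{2}>0$, so the inequality is trivial.

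The substantive case is $m'=n(M+1-i)$, where $D\not\subset\RT$ and the restriction $\tilde{\theta}|_{\widetilde{D}}:\widetilde{D}\to D$ is a classical $n$-cyclic covering branched over a subset $\Delta$ of $D\cap\RT$. Applying the Hurwitz formula to the normalization of $\widetilde{D}$ together with Lemma~\ref{lem3.2} (with $\deg=n$ and $g(D)=0$) gives
\[
g(\widetilde{D})\leq 1-n+\frac{n-1}{2}\,\sharp\Delta\leq 1-n+\frac{n-1}{2}\,D.\RT ,
\]
where I use that $\sharp\Delta\leq\sharp(D\cap\RT)\leq D.\RT$. It therefore suffices to prove $D.\RT\leq m'=n(M+1-i)$. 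I would establish this by tracking the intersection number as one passes from the stage where $E'$ is created up to $\WT$: immediately after $\psi'$ one has (by the transformation rule of Lemma~\ref{lem1.2}(2)) that $E'$ meets the branch curve with intersection number $n[m'/n]=m'$, and each subsequent blow-up either misses the current proper transform of $E'$, leaving the intersection number unchanged, or is centred at a point of it—necessarily of multiplicity one, since $D$ is smooth—in which case the rule $R_{j}=\psi_{j}^{\ast}R_{j-1}-n[\frac{m_j}{n}]E_j$ lowers the intersection number by $n[\frac{m_j}{n}]\geq0$. Hence $D.\RT\leq m'$, and substituting back yields
\[
g(\widetilde{D})\leq 1-n+\frac{(n-1)n(M+1-i)}{2}=\frac{(n-1)(n(M-i)+n-2)}{2},
\]
as claimed.

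The main obstacle I expect is the bookkeeping in this last step: one must verify that $D.\RT$ never increases under the blow-ups resolving $\RT$, which rests on the smoothness of the proper transforms of $E'$ (so that it is blown up only at multiplicity-one points) and on the precise transformation rule for the branch curve. A minor additional point is the interpretation of $g(\widetilde{D})$ when $\widetilde{D}$ is reducible, which is handled by applying the Hurwitz/Lemma~\ref{lem3.2} estimate to each connected component of its normalization.
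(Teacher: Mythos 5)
Your proof is correct and follows essentially the same route as the paper's: the same case split on $m'\bmod n$ (with the branch-component case giving a rational curve), the same bound $D.\RT\leq n(M+1-i)$ on the intersection of the proper transforms, and the same combination of the Hurwitz formula with Lemma~\ref{lem3.2} applied to the normalization of $\widetilde{D}$. The only difference is that you spell out the blow-up bookkeeping behind the intersection bound (and the reducible case of $\widetilde{D}$), which the paper asserts without detail.
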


\begin{proof}
Let $m'$ be the multiplicity of the singular point blown up by $\psi'$, and $\widetilde{E}$ the proper transform of the 
exceptional curve of $\psi'$ on $\WT$.

When $m'\in n\Z+1$,
since $\widetilde{E}$ is contained in $\RT$, $\widetilde{D}$ is a smooth rational curve.

Assume that $m'\in n\Z$.
From $m'=n(M+1-i)$, the intersection number of the exceptional curve of $\psi'$ 
and the branch locus is $n(M+1-i)$. 
Hence the intersection number of their proper transforms on $\WT$ is at most $n(M+1-i)$.
On the other hand, we consider the composite 
$\pi:\widetilde{D'} \to \widetilde{D} \overset{\tilde{\theta}|_{\widetilde{D}}}{\to} \widetilde{E}$, 
where $\widetilde{D'} \to \widetilde{D}$ is normalization of $\widetilde{D}$, and let $B_{\pi}$ be 
the branch locus $\pi$.
From the Hurwitz formula for $\pi$ and Lemma $\ref{lem3.2}$, we get 
$$
2g(\widetilde{D'})-2+2n \leq (n-1)\sharp B_{\pi}.
$$
Since $\tilde{\theta}|_{\tilde{D}}$ is totally ramified, 
\begin{align*}
\sharp B_{\pi} \leq \sharp B_{\tilde{\theta}|_{\tilde{D}}} \leq \widetilde{E}.\RT \leq n(M+1-i).
\end{align*}
Therefore we get
$$
g(\widetilde{D})=g(\widetilde{D'}) \leq \frac{(n-1)(n(M-i)+n-2)}{2},
$$
which is what we want.
\end{proof}

\begin{prop}
\label{prop2.6}
Let $f:S \to B$ be a primitive cyclic covering fibration of type $(g,h,n)$ such that 
$q_{\tilde{\theta}}>0$, $h\geq1$, $n\geq 3$ and let $\alpha_{i}$ $(i\geq 0 )$ be the singularity index in Definition $\ref{def1.1}$. Then,
\begin{align}
\nonumber
 &2\bigl(g-1-n(h-1)\bigr)\AL_{0}\\
&\leq\bigl(g-1-n(h-1)\bigr)^2 \frac{K_{\PHI}^2}{(n-1)(h-1)}+T+
\sum_{k\geq1}\frac{12n}{(n-1)(n+1)}\bar{a}_{k}\AL_{k},
\label{2.4}
\end{align}
where $\bar{a}_{k}$ is defined in Proposition $\ref{prop1.3}$.
If the image $\AL_{\SigmaT}(\ST)$ is a curve and $\nu(q_{\tilde{\theta}})\geq 1$, where
$$
\nu(x):=\biggl[ \frac{2(x-1)}{n(n-1)}-\frac{n-2}{n}\biggr],
$$
then
\begin{align}
\nonumber
 &2\bigl(g-1-n(h-1)\bigr)\bigl(\AL_{0}+\sum_{k=1}^{\nu(q_{\tilde{\theta}})}nk(nk-1)\AL_{k}\bigr) \\
&\leq\bigl(g-1-n(h-1)\bigr)^2 \frac{K_{\PHI}^2}{(n-1)(h-1)}+T+
\sum_{k\geq\nu(q_{\tilde{\theta}})+1}\frac{12n}{(n-1)(n+1)}\bar{a_{k}}\AL_{k}.
\label{2.5}
\end{align}
In $(\ref{2.4})$ and $(\ref{2.5})$, the quantity $\frac{K_{\PHI}^2}{(n-1)(h-1)}$ is understood to be zero when $h=1$.
\end{prop}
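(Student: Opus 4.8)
The plan is to turn both (\ref{2.4}) and (\ref{2.5}) into a single self-intersection estimate for the branch curve, and then to supply that estimate from Proposition~\ref{prop2.2}. Put $s:=g-1-n(h-1)$, so that $2s=(n-1)r$ and $t=2s+(n-1)(h-1)$. A direct expansion of the definition of $T$ gives the identity
$$
s^2\frac{K_{\PHI}^2}{(n-1)(h-1)}+T+tR^2=2s(K_{\PHI}+R)R
$$
(read with first term $0$ when $h=1$). Substituting this and (\ref{(1.9)}) into (\ref{2.4}), the coefficient of each $\AL_k$ collapses to $n^2k^2t$, so that (\ref{2.4}) is equivalent to $R^2\leq\sum_{k\geq1}n^2k^2\AL_k$; the same computation shows (\ref{2.5}) is equivalent to $R^2\leq\sum_{k\geq\nu(q_{\tilde{\theta}})+1}n^2k^2\AL_k$. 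Since $\RT^2=R^2-\sum_{k\geq1}n^2k^2\AL_k$ by (\ref{(1.8)}), these amount to
$$
\RT^2\leq0\qquad\text{and}\qquad\RT^2\leq-\sum_{k=1}^{\nu(q_{\tilde{\theta}})}n^2k^2\AL_k,
$$
respectively, and it is these two negativity statements that I would prove.

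The first is obtained exactly as in the case $h=0$: by Proposition~\ref{prop2.2}, $\Fix(G)$ is contracted by $\AL_{\SigmaT}$, so the intersection form is negative (semi-)definite on it (Mumford's theorem or Zariski's Lemma), giving $\Fix(G)^2\leq0$ and hence $\RT^2=n\Fix(G)^2\leq0$ because $\tilde{\theta}^*\RT=n\Fix(G)$. This establishes (\ref{2.4}) with no further hypothesis.

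For the sharper bound, assume $\AL_{\SigmaT}(\ST)$ is a curve, so its geometric genus is at least $q_{\tilde{\theta}}$ by Proposition~\ref{prop2.2}(3). Any $k$-th singular point has $\Ord=k$, so for $1\leq k\leq\nu(q_{\tilde{\theta}})$ Lemma~\ref{lem2.5} bounds the geometric genus of the proper inverse image $\widetilde{D}$ on $\ST$ of the corresponding exceptional curve by $\frac{(n-1)(nk-2)}{2}$, and a short computation shows that $k\leq\nu(q_{\tilde{\theta}})$ forces $\frac{(n-1)(nk-2)}{2}<q_{\tilde{\theta}}$. Thus $g(\widetilde{D})<q_{\tilde{\theta}}\leq g(\AL_{\SigmaT}(\ST))$, so $\widetilde{D}$ cannot dominate the image curve and is contracted by $\AL_{\SigmaT}$; by Lemma~\ref{lem2.4} every point infinitely near such a singular point again has order $\leq\nu(q_{\tilde{\theta}})$, so all exceptional curves arising in their resolution are contracted as well. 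Now let $\beta:\WT\to\WH'$ be the composite of those blow-ups in $\psi$ whose order is at most $\nu(q_{\tilde{\theta}})$, and let $\RH$ be the branch curve (the proper transform of $R$) on $\WH'$; then $\RT$ is the proper transform of $\RH$, and Lemma~\ref{lem1.2}(2) gives $\RT^2=\RH^2-\sum_{k=1}^{\nu(q_{\tilde{\theta}})}n^2k^2\AL_k$, so the sharper bound reduces to $\RH^2\leq0$. Consider the effective divisor $Z:=\frac1n\tilde{\theta}^*\beta^*\RH$ on $\ST$: it equals $\Fix(G)$ plus a non-negative combination of the proper inverse images of the exceptional curves of $\beta$, all of which were just shown to be contracted by $\AL_{\SigmaT}$ (the term $\Fix(G)$ being contracted by Proposition~\ref{prop2.2}(2)). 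Hence $Z$ is supported on curves lying in fibers of $\AL_{\SigmaT}$, so $Z^2\leq0$ by Zariski's Lemma; since $Z^2=\frac1n\RH^2$, we get $\RH^2\leq0$ and therefore (\ref{2.5}).

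The algebraic reduction in the first paragraph is routine once the common coefficient $n^2k^2t$ is spotted, so the real work -- and the main obstacle -- is the last step: one must verify that $Z=\frac1n\tilde{\theta}^*\beta^*\RH$ is supported \emph{entirely} on curves contracted by $\AL_{\SigmaT}$, for which the non-increase of multiplicities under passing to infinitely near points (Lemma~\ref{lem2.4}) and the exact calibration of $\nu$ against the genus bound of Lemma~\ref{lem2.5} are both indispensable, and that the pullback bookkeeping is consistent, i.e. $Z^2=\frac1n\RH^2$ together with $\RT^2=\RH^2-\sum_{k=1}^{\nu(q_{\tilde{\theta}})}n^2k^2\AL_k$.
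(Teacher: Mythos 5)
Your proposal is correct and takes essentially the same route as the paper's proof: your reduction of (\ref{2.4}) and (\ref{2.5}) to $\RT^2\leq 0$ and $\RH^2\leq 0$ is exactly the paper's substitution with coefficients $\hat{x},\hat{y},\hat{z}$ (in corrected form --- the paper's stated $\hat{z}$ is missing a factor of $2$, as your identity $s^2\frac{K_{\PHI}^2}{(n-1)(h-1)}+T+tR^2=2s(K_{\PHI}+R)R$ shows), and your negativity arguments via Proposition~\ref{prop2.2}, Lemma~\ref{lem2.4}, Lemma~\ref{lem2.5} and Zariski's lemma are the paper's own. In particular, your divisor $Z=\frac{1}{n}\tilde{\theta}^{*}\beta^{*}\RH$ is precisely the paper's ``total inverse image of $\RH_{M-\nu(q_{\tilde{\theta}})}$ in $\ST$'' made explicit as a negative semi-definite configuration.
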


\begin{proof}
Firstly assume that $\AL_{\SigmaT}(\ST)$ is a curve of geometric genus $g'$.
In this case, by Proposition~\ref{prop2.2}, we have $g' \geq q_{\tilde{\theta}}$ and see that 
any curve of geometric genus less than $g'$ on $\widetilde{S}$ is contracted by
$\AL_{\SigmaT}$.
Hence, we know from Lemma $\ref{lem2.5}$ that for any $1\leq i \leq M$ satisfying 
$$
\frac{(n-1)(n(M-i)+n-2)}{2}\leq g'-1,
$$ 
the proper transform of the exceptional curve of $\hat{\psi}_{i}$ to $\ST$ is contracted 
by $\AL_{\SigmaT}$.
Then, since $q_{\tilde{\theta}}\leq g'$, for any $1\leq i\leq M$ satisfying 
$$
\frac{(n-1)(n(M-i)+n-2)}{2}\leq q_{\tilde{\theta}}-1,
$$ 
the same holds true.
So we conclude that the total inverse image of 
$\RH_{M-\nu(q_{\tilde{\theta}})}$ in $\ST$ is contracted by $\AL_{\SigmaT}$, where 
$\RH_{M-\nu(q_{\tilde{\theta}})}\subset \WH_{M-\nu(q_{\tilde{\theta}})}$ is the image of $\RT$. 
Therefore, the total inverse image of $\RH_{M-\nu(q_{\tilde{\theta}})}$ forms a negative 
semi-definite configuration.
In particular, we have 
\begin{align}
\numberwithin{equation}{section}
\RH_{M-\nu(q_{\tilde{\theta}})}^2 \leq 0.
\label{2.6}
\end{align}
By the construction, we have 
\begin{align*}
\RH_{M-\nu(q_{\tilde{\theta}})}^2&=
R^2-\sum_{k>\nu(q_{\tilde{\theta}})}n^2k^2\AL_{k}\\
&=\hat{x}\frac{K_{\PHI}^2}{(n-1)(h-1)}+\hat{y}T+\hat{z}(K_{\PHI}+R)R
-\sum_{k>\nu(q_{\tilde{\theta}})}n^2k^2\AL_{k},
\end{align*}
where 
$$
\hat{x}=-\frac{\bigl(g-1-n(h-1)\bigr)^2}{t},
\;\hat{y}=-\frac{1}{t},
\;\hat{z}=\frac{\bigl(g-1-n(h-1)\bigr)}{t}.
$$
Hence from $(\ref{(1.9)})$, $(\ref{2.6})$ and the above equality, 
we get  
$$
t\hat{z}\bigl(\AL_{0}+\sum_{k=1}^{\nu(q_{\tilde{\theta}})}nk(nk-1)\AL_{k} \bigr)
\leq -t\hat{x}\frac{K_{\PHI}^2}{(n-1)(h-1)}-t\hat{y}T+
\sum_{k > \nu(q_{\tilde{\theta}})}t\bigl( n^2k^2-nk(nk-1)\hat{z}\bigr)\AL_{k}.
$$
This is nothing more than $(\ref{2.5})$.

Even when $\AL_{\SigmaT}(\ST)$ is not a curve, we have 
$\RT^2\leq0$ by Proposition $\ref{prop2.2}$.
Using this instead of (\ref{2.6}), we get $(\ref{2.4})$ by a similar argument as above.
\end{proof}

\begin{thm}
\label{thm2.7}
Let $f:S\to B$ be a primitive cyclic covering fibration of type $(g,h,n)$ which is not locally trivial and such that 
$q_{\tilde{\theta}}$ and $h$ are both positive, $n\geq 3$. 
Put
$$
F(g,h,l)=(g-1)^2-2n(g-1)\bigl((h+1)(n-1)(l+1)-1 \bigr)-\bigl((l+1)(n-1)^2-1\bigr)^2 n^2(h^2-1).
$$

{\rm (i)} If  $F(g,h,0)\geq0$, then 
$$
\lambda_{f}\geq\lambda_{g,h,n}:=8-\frac{2(n+1)\bigl(g-1-n(h-1)\bigr)}{3\bar{a}_{1}}.
$$

{\rm (ii)} Assume that $\AL_{\SigmaT}(\ST)$ is a curve and $\nu(q_{\tilde{\theta}})\geq1$.
If  $F(g,h,\nu(q_{\tilde{\theta}}))\geq0$, then 
$$
\lambda_{f}\geq\lambda_{g,h,n,q_{\tilde{\theta}}}:=8-\frac{2(n+1)\bigl(g-1-n(h-1)\bigr)}{3\bar{a}_{\nu(q_{\tilde{\theta}})+1}}.
$$
\end{thm}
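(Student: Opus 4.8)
The plan is to establish the equivalent inequality $K_f^2-\lambda\chi_f\ge0$, taking $\lambda=\lambda_{g,h,n}$ in case (i) and $\lambda=\lambda_{g,h,n,q_{\tilde{\theta}}}$ in case (ii). Multiplying by the positive quantity $t$ and inserting the formulas $(\ref{1.17})$ and $(\ref{1.18})$ of Proposition~\ref{prop1.3} (the second being an equality), I would write
\begin{align*}
t(K_f^2-\lambda\chi_f)\ge{}& t(x'-\lambda\bar{x}')\frac{K_{\PHI}^2}{(n-1)(h-1)}+t(y'-\lambda\bar{y}')T-\lambda nt\chi_{\PHI}\\
&{}+t(z'-\lambda\bar{z}')\AL_{0}+\sum_{k\ge1}(a_{k}-\lambda\bar{a}_{k})\AL_{k},
\end{align*}
a linear combination of the non-negative quantities $\tfrac{K_{\PHI}^2}{(n-1)(h-1)}$, $T$, $\chi_{\PHI}$, $\AL_{0}$ and the $\AL_{k}$. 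The only structurally troublesome term is $-\lambda nt\chi_{\PHI}$, whose coefficient is negative; everything else will be arranged to have a non-negative coefficient.

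Next I would eliminate $\AL_{0}$. A direct computation (using $n\ge3$ and $r\ge2n$) should show that for the chosen $\lambda$ the coefficient $t(z'-\lambda\bar{z}')$ of $\AL_{0}$ is non-positive; since $\AL_{0}\ge0$, this lets me replace $\AL_{0}$ by its upper bound from Proposition~\ref{prop2.6}, namely $(\ref{2.4})$ in case (i) and $(\ref{2.5})$ in case (ii), without reversing the inequality. In case (ii) one first regroups the low-order terms as $t(z'-\lambda\bar{z}')\bigl(\AL_{0}+\sum_{k=1}^{\nu(q_{\tilde{\theta}})}nk(nk-1)\AL_{k}\bigr)$ plus a remainder, so that precisely the quantity bounded by $(\ref{2.5})$ can be substituted; one must then separately verify that the leftover coefficients of $\AL_{1},\dots,\AL_{\nu(q_{\tilde{\theta}})}$ remain non-negative. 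After substitution, the contribution of the $\AL_{0}$-bound is redistributed among the $\tfrac{K_{\PHI}^2}{(n-1)(h-1)}$-term, the $T$-term, and the surviving $\AL_{k}$ (those with $k\ge1$ in case (i), $k\ge\nu(q_{\tilde{\theta}})+1$ in case (ii)).

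Using the identity $a_{k}=12\bar{a}_{k}-nt$, each surviving $\AL_{k}$-coefficient then takes the form $c(\lambda)\bar{a}_{k}-nt$ for a single constant $c(\lambda)$ independent of $k$, while $\bar{a}_{k}$ is strictly increasing in $k$. Consequently the defining values $\lambda_{g,h,n}$ and $\lambda_{g,h,n,q_{\tilde{\theta}}}$ are exactly those making the smallest surviving coefficient---that of $\AL_{1}$, respectively $\AL_{\nu(q_{\tilde{\theta}})+1}$---vanish, whence every $\AL_{k}$-coefficient becomes automatically non-negative.

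It then remains to prove that the net coefficients of $\tfrac{K_{\PHI}^2}{(n-1)(h-1)}$ and of $T$ stay non-negative after absorbing the harmful $-\lambda nt\chi_{\PHI}$. When $h=1$ I would use $T=2(g-1)K_{\PHI}.R\ge\tfrac{4(g-1)^2}{n-1}\chi_{\PHI}$, which follows from $(\ref{1.19})$, to dominate $-\lambda nt\chi_{\PHI}$ by the positive $T$-term; when $h\ge2$ I would instead invoke Xiao's slope inequality $K_{\PHI}^2\ge\tfrac{4(h-1)}{h}\chi_{\PHI}$ for the genus-$h$ fibration $\PHI$ together with the positive $\tfrac{K_{\PHI}^2}{(n-1)(h-1)}$-coefficient. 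The sign condition that emerges from requiring these net coefficients to be non-negative is exactly the hypothesis $F(g,h,l)\ge0$ (with $l=0$ in case (i) and $l=\nu(q_{\tilde{\theta}})$ in case (ii)); as a consistency check, when $h=1$ the term of $F$ carrying $(h^2-1)$ drops out, matching the fact that only the $T$-term, and not $K_{\PHI}^2$, is available there. The main obstacle is precisely this final bookkeeping: tracking the redistribution of the $\AL_{0}$-bound and the $\chi_{\PHI}$-absorption through the two distinct regimes $h=1$ and $h\ge2$, and matching the resulting inequality with $F(g,h,l)\ge0$. The difficulty is computational rather than conceptual, since all the structural ingredients---Proposition~\ref{prop2.6}, the identity $a_{k}=12\bar{a}_{k}-nt$, and the slope inequality for $\PHI$---are already available.
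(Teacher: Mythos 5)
Your proposal is correct and follows essentially the same route as the paper's own proof: start from Proposition~\ref{prop1.3}, verify $t(z'-\lambda\bar{z}')\le 0$ so that Proposition~\ref{prop2.6} can be substituted (regrouping $\AL_0+\sum_{k=1}^{\nu(q_{\tilde{\theta}})}nk(nk-1)\AL_k$ in case (ii) and separately checking the leftover low-$k$ coefficients), exploit $a_k=12\bar{a}_k-nt$ and the monotonicity of $\bar{a}_k$ so that the coefficient at the threshold index vanishes and all higher ones are non-negative, and finally absorb $-n\lambda t\chi_{\PHI}$ via $(\ref{1.19})$ when $h=1$ and via Xiao's slope inequality when $h\ge 2$, which is exactly where the hypothesis $F(g,h,l)\ge 0$ enters. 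The only caveat is your parenthetical appeal to $r\ge 2n$ (Lemma~\ref{lem_added} is proved only for type $(g,0,n)$, whereas here $h\ge 1$), but this is immaterial since the sign check $z'-\lambda\bar{z}'\le 0$ goes through by the direct computation the paper carries out.
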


\begin{proof}
Here we restrict ourselves to the case that $\AL_{\SigmaT}(\ST)$ is a curve and show (ii) only,
since (i) can be shown similarly.
From (\ref{1.17}) and (\ref{1.18}), we obtain
\begin{align}
\nonumber
t(K_{f}^2-\lambda_{g,h,n,q_{\tilde{\theta}}} \chi_{f})& \geq t(x'-\lambda_{g,h,n,q_{\tilde{\theta}}}\bar{x}')\frac{K_{\PHI}^2}{(n-1)(h-1)}
+t(y'-\lambda_{g,h,n,q_{\tilde{\theta}}}\bar{y}')T \\
& +t(z'-\lambda_{g,h,n,q_{\tilde{\theta}}}\bar{z}')\AL_{0}
+\sum_{k\geq1}(a_{k}-\lambda_{g,h,n,q_{\tilde{\theta}}}\bar{a}_{k})\AL_{k}
-n\lambda_{g,h,n,q_{\tilde{\theta}}} t\chi_{\PHI}
\label{2.7}
\end{align}
To apply $(\ref{2.5})$ to the above inequality, we have to check that 
$z'-\lambda_{g,h,n,q_{\tilde{\theta}}} \bar{z}'\leq0$ in advance.
Since 
\begin{align}
\label{2.7''}
\lambda_{g,h,n,q_{\tilde{\theta}}}
&\geq8-\frac{2(n+1)\bigl(g-1-n(h-1)\bigr)}{3\bar{a}_{1}}\\
\nonumber
&=8-\frac{8\bigl(g-1-n(h-1)\bigr)}{(n-1)\bigl( 2(g-1)+n(h-1)(n-3)\bigr)},
\end{align}
it is sufficient to see that 
$$8-\frac{8\bigl(g-1-n(h-1)\bigr)}{(n-1)\bigl( 2(g-1)+n(h-1)(n-3)\bigr)}
\geq\frac{24(n-1)(g-1)}{2(2n-1)(g-1)-n(n+1)(h-1)},$$
which is equivalent to 
\begin{align*}
(g-1-n(h-1))\bigl(16(g-1)n(n-2)+8n(n+1)(h-1)((n-1)(n-3)+1)\bigr)\geq 0,
\end{align*}
the validity of which can be checked directly.
Therefore we get $z'-\lambda_{g,h,n,q_{\tilde{\theta}}}\bar{z}'\leq0$.
Applying $(\ref{2.5})$ to $(\ref{2.7})$, we obtain 
\begin{align*}
&t(K_{f}^2-\lambda_{g,h,n,q_{\tilde{\theta}}}\chi_{f}) \\
&\geq\frac{n-1}{12}t\biggl(12(g-1)-
\frac{3}{2}\lambda_{g,h,n,q_{\tilde{\theta}}}\bigl(g-1-n(h-1)\bigr)\biggr)
\frac{K_{\PHI}^2}{(n-1)(h-1)}-n\lambda_{g,h,n,q_{\tilde{\theta}}}t\chi_{\PHI}\\
&+\frac{(n-1)(8-\lambda_{g,h,n,q_{\tilde{\theta}}})}{8\bigl(g-1-n(h-1)\bigr)}tT\\
&+\frac{nt}{12}\sum_{k=1}^{\nu(q_{\tilde{\theta}})}
\big((n-1)k((2n-1)k-3)\lambda_{g,h,n,q_{\tilde{\theta}}}-12((n-1)k-1)^2\bigr)\AL_{k}\\
&+t\sum_{k>\nu(q_{\tilde{\theta}})}
\biggl( \frac{(8-\lambda_{g,h,n,q_{\tilde{\theta}}})3n}{2(n+1)\bigl(g-1-n(h-1)\bigr)}
\bar{a}_{k}-n\biggr)\AL_{k}.
\end{align*}
We will show that 
$\big((n-1)k((2n-1)k-3)\lambda_{g,h,n,q_{\tilde{\theta}}}-12((n-1)k-1)^2\bigr)
\geq0$ for $1\leq k \leq \nu(q_{\tilde{\theta}}).$
Note that 
\begin{align*}
&(n-1)k((2n-1)k-3)\lambda_{g,h,n,q_{\tilde{\theta}}}-12((n-1)k-1)^2\\
=&\frac{1}{n^2}\biggl(nk\bigl( (n-1)(nk-1)(\lambda_{g,h,n,q_{\tilde{\theta}}} (2n-1)-12(n-1))+
(n^2-1)(12-\lambda_{g,h,n,q_{\tilde{\theta}}}) \bigr)-12n^2\biggr).
\end{align*}
Firstly we will show that $\lambda_{g,h,n,q_{\tilde{\theta}}}
\geq \frac{12(n-1)}{2n-1}$. 
From $(\ref{2.7''})$, 
it is sufficient to check that 
$$
8-\frac{8\bigl(g-1-n(h-1)\bigr)}{(n-1)\bigl( 2(g-1)+n(h-1)(n-3)\bigr)}
\geq \frac{12(n-1)}{2n-1}.
$$
A calculation shows that it is equivalent to 
$$
8n(n-2)(g-1)+(4(n^2-1)(n-3)+8(2n-1))n(h-1)\geq 0,
$$
which holds true clearly.
So we have shown $\lambda_{g,h,n,q_{\tilde{\theta}}}
\geq \frac{12(n-1)}{2n-1}$ and it follows that  
$$
(n-1)k((2n-1)k-3)\lambda_{g,h,n,q_{\tilde{\theta}}}-12((n-1)k-1)^2
$$
is increasing in $k$. 
Evaluating at $k=1$, we get 
\begin{align*}
&(n-1)k((2n-1)k-3)\lambda_{g,h,n,q_{\tilde{\theta}}}-12((n-1)k-1)^2 \\
&\geq2(n-2)\bigl((n-1)\lambda_{g,h,n,q_{\tilde{\theta}}}-6(n-2)\bigr)\\
&\geq0,
\end{align*}
by $\lambda_{g,h,n,q_{\tilde{\theta}}}\geq \frac{12(n-1)}{2n-1}$.
Since
\begin{align}
\frac{(8-\lambda_{g,h,q_{\tilde{\theta}}}^1)3n}{2(n+1)\bigl(g-1-n(h-1)\bigr)}\bar{a}_{k}-n
\geq
\frac{(8-\lambda_{g,h,q_{\tilde{\theta}}}^1)3n}{2(n+1)\bigl(g-1-n(h-1)\bigr)}
\bar{a}_{\nu(q_{\tilde{\theta}})+1}-n=0
\label{2.10}
\end{align}
holds for any $k\geq\nu(q_{\tilde{\theta}})+1$, we obtain 
\begin{align}
\nonumber
& K_{f}^2-\lambda_{g,h,n,q_{\tilde{\theta}}}\chi_{f} \\
\geq & \;\frac{n-1}{12}\biggl(12(g-1)-
\frac{3}{2}\lambda_{g,h,n,q_{\tilde{\theta}}}\bigl(g-1-n(h-1)\bigr)\biggr)
\frac{K_{\PHI}^2}{(n-1)(h-1)} \label{2.12} \\
&-n\lambda_{g,h,n,q_{\tilde{\theta}}}\chi_{\PHI}
+\frac{(n-1)(8-\lambda_{g,h,n,q_{\tilde{\theta}}})}{8\bigl(g-1-n(h-1)\bigr)}T\nonumber
\end{align}

If $F(g,h,\nu(q_{\tilde{\theta}}))\geq0$ and $h=1$, 
then by $(\ref{1.19})$ we have 
$T=2(g-1)K_{\PHI}.R\geq \frac{4(g-1)^2}{n-1}\chi_{\PHI}$.
Hence it follows from $(\ref{2.12})$ that 
$$
K_{f}^2-\lambda_{g,1,n,q_{\tilde{\theta}}}\chi_{f}\geq
\frac{n+1}{3\bar{a}_{\nu(q_{\tilde{\theta}})+1}}F(g,1,\nu(q_{\tilde{\theta}}))\chi_{\PHI}\geq0
$$
which gives us (ii) for $h=1$.
If 
$F(g,h,\nu(q_{\tilde{\theta}}))\geq 0$ and $h\geq 2$, then we can use   
Xiao's slope inequality $K_{\PHI}^2 \geq \frac{4(h-1)}{h}\chi_{\PHI}$ and $T\geq 0$, to get 
$$
K_{f}^2-\lambda_{g,h,n,q_{\tilde{\theta}}}\chi_{f}
\geq \frac{n+1}{3h\bar{a}_{\nu(q_{\tilde{\theta}})+1}}
F(g,h,\nu(q_{\tilde{\theta}}))\chi_{\PHI}
\geq0
$$
from $(\ref{2.12})$.
Hence we have shown (ii) also for $h\geq 2$.
\end{proof}

\section{Special irregular cyclic covering fibrations of ruled surfaces.}

Let $f:S\to B$ be a primitive cyclic covering fibration of type $(g,0,n)$ with $q_f>0$ and 
suppose that it is not locally trivial.
Let $\alpha_{\SigmaT}:\ST \to \Alb_{\SigmaT}(\ST)$ be the morphism defined as in Definition 
\ref{(2.1)} for the generator $\SigmaT$ of the covering transformation group $G$ of $\tilde{\theta}:\widetilde{S}\to \widetilde{W}$.
Moreover we assume that there is a component $C$ of $\Fix(G)$ such that $C^2=0$.
Note then that $\alpha_{\SigmaT}(\ST)$ is a curve by Proposition \ref{prop2.2}.

\begin{prop}
\label{prop3.1}
In the above situation, there are a fibrations $\tilde{f'}:\ST \to B'$, $\PHI':\WT \to \mathbb{P}^1$ and 
a morphism $\theta':B' \to \mathbb{P}^1$, where $B'$ is s smooth curve, such that $\RT$ is $\tilde{\PHI'}$-vertical, 
$q_{f}\leq g(B')$, and they fit into the commutative diagram:
\begin{equation}
\nonumber
 \vcenter{ \xymatrix{ 
 \ST \ar[r]^{\tilde{\theta}} \ar[d]_{\tilde{f}'} 
 & \WT \ar[d]^{\tilde{\PHI}'} \\
 B' \ar[r]_{\theta'} 
 & \PROJ^1 . } } 
 \end{equation}
\end{prop}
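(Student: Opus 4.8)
The plan is to produce $B'$ and the fibration $\FDT$ directly from the Albanese-type morphism $\alpha_{\SigmaT}$, and then to obtain $\widetilde{\PHI}'$ and $\theta'$ by passing to the quotient by $G$. Since the component $C\subset\Fix(G)$ has $C^2=0$, the intersection form cannot be negative definite on $\Fix(G)$, so by Proposition~\ref{prop2.2} (together with Mumford's theorem) the image $\alpha_{\SigmaT}(\ST)$ is a curve whose geometric genus is at least $q_{\tilde{\theta}}=q_f$. First I would take the Stein factorization of $\alpha_{\SigmaT}\colon\ST\to\alpha_{\SigmaT}(\ST)$, followed by normalization of the base, to obtain a genuine fibration $\FDT\colon\ST\to B'$ with $B'$ smooth together with a finite morphism $B'\to\alpha_{\SigmaT}(\ST)$. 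Applying the Riemann--Hurwitz formula to $B'\to\alpha_{\SigmaT}(\ST)$ then yields $g(B')\geq q_f$, which is the asserted inequality.

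Next I would verify that $\FDT$ is $G$-equivariant. Choosing the base point $x_0$ of $\alpha$ in $\Fix(G)$ as in the proof of Proposition~\ref{prop2.2}, one has $\alpha\circ\SigmaT=\alpha(\SigmaT)\circ\alpha$ exactly, and a direct computation using $(\alpha(\SigmaT)-1)\alpha(\SigmaT)=\alpha(\SigmaT)(\alpha(\SigmaT)-1)$ gives $\alpha_{\SigmaT}\circ\SigmaT=\alpha(\SigmaT)\circ\alpha_{\SigmaT}$, where $\alpha(\SigmaT)$ is regarded as an automorphism of $\Alb_{\SigmaT}(\ST)=\Image(\alpha(\SigmaT)-1)$, a subtorus it preserves. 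Hence $\SigmaT$ carries fibers of $\alpha_{\SigmaT}$ to fibers of $\alpha_{\SigmaT}$, so the curve $\alpha_{\SigmaT}(\ST)$ is $G$-stable. Because both the Stein factorization and the normalization are canonical constructions, the $G$-action descends to $B'$ and $\FDT$ becomes $G$-equivariant.

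Taking the quotient by $G$ then produces the fibration $\widetilde{\PHI}'\colon\WT=\ST/G\to B'/G$ together with the quotient morphism $\theta'\colon B'\to B'/G$, and these fit into the asserted commutative square with $\tilde{\theta}$ and $\FDT$. Since $\Fix(G)$ is contracted by $\alpha_{\SigmaT}$ by Proposition~\ref{prop2.2}(2), it is $\FDT$-vertical, and as $\tilde{\theta}^{\ast}\RT=n\,\Fix(G)$ the branch divisor $\RT$ is $\widetilde{\PHI}'$-vertical. It remains to identify $B'/G$ with $\PROJ^1$. The general fiber $\GAT$ of the original ruling $\widetilde{\PHI}\colon\WT\to B$ is a $\PROJ^1$; were it contracted by $\widetilde{\PHI}'$, the latter would factor through $\widetilde{\PHI}$ and $\RT$ would then be $\widetilde{\PHI}$-vertical, contradicting $\RT.\GAT=r\geq 2n>0$ from Lemma~\ref{lem_added}. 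Thus $\widetilde{\PHI}'|_{\GAT}\colon\PROJ^1\to B'/G$ is non-constant, which forces $B'/G\cong\PROJ^1$.

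The main obstacle I anticipate is the $G$-equivariance step: one must be scrupulous about base points and translations so that the identity $\alpha_{\SigmaT}\circ\SigmaT=\alpha(\SigmaT)\circ\alpha_{\SigmaT}$ holds strictly rather than merely up to a translation, and one must check that $\alpha(\SigmaT)$ genuinely restricts to an automorphism of $\Alb_{\SigmaT}(\ST)$ so that the quotient fibration is well defined. The remaining ingredients---the Riemann--Hurwitz genus estimate, the verticality of $\RT$, and the rationality of $B'/G$---are then routine.
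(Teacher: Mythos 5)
Your proof is correct, and it follows the paper's skeleton at most stages (Stein factorization of $\alpha_{\SigmaT}$, the genus bound from Proposition~\ref{prop2.2}~(3), verticality of $\RT$ from Proposition~\ref{prop2.2}~(2) together with $\tilde{\theta}^{\ast}\RT=n\Fix(G)$, and rationality of the quotient curve because a ruling fiber dominates it), but it handles the crucial step --- that $\SigmaT$ permutes the fibers of $\tilde{f}'$, so the $G$-action descends to $B'$ --- by a genuinely different argument. The paper proves this by an intersection-theoretic contradiction that uses the hypothesis $C^2=0$ a second time: by Zariski's lemma the fiber $F_C'$ containing $C$ equals $aC$, hence is $\SigmaT$-invariant, and if some $\SigmaT^{\ast}F'$ had a horizontal component one would get $0<(\SigmaT^{\ast}F'.F_C')=(F'.F_C')=0$. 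You instead establish the exact functorial identity $\alpha_{\SigmaT}\circ\SigmaT=\alpha(\SigmaT)\circ\alpha_{\SigmaT}$ --- legitimate because the base point can be taken in $\Fix(G)\neq\emptyset$ (as in the proof of Proposition~\ref{prop2.2}~(2)), so that $\alpha(\SigmaT)$ is a genuine homomorphism, commutes with $\alpha(\SigmaT)-1$, and preserves the subtorus $\Alb_{\SigmaT}(\ST)$ --- and then invoke uniqueness of the Stein factorization to descend the action. Your route is more conceptual and slightly more general: it yields equivariance whenever $\alpha_{\SigmaT}(\ST)$ is a curve, so the hypothesis $C^2=0$ is used only where it is truly needed, namely (via Mumford's negative-definiteness theorem) to rule out that the image is a surface; the price is the care required about base points and translations, which you correctly identify and resolve. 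The paper's route stays entirely within elementary intersection theory and avoids Albanese functoriality, at the cost of invoking the special hypothesis twice. Two cosmetic remarks: the genus bound should formally be read through the normalization of $\alpha_{\SigmaT}(\ST)$ (your finite map $B'\to\alpha_{\SigmaT}(\ST)$ factors through it), exactly as the paper does by citing Proposition~\ref{prop2.2}~(3); and in the last step the full strength $r\geq 2n$ of Lemma~\ref{lem_added} is not needed, since $r>0$ already gives the contradiction.
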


\begin{proof}
We can obtain $\tilde{f'}:\ST \to B'$ from the Stein factorization of $\alpha_{\SigmaT}:\ST \to \alpha_{\SigmaT}(\ST)$. 
Hence we have $g(B')\geq q_f$ by Proposition~\ref{prop2.2}, (3).
We will show that the automorphism $\SigmaT:\ST \to \ST$ induces an automorphism of $B'$. 
We assume that there is a fiber $F'$ of $\tilde{f'}$ such that $\SigmaT^{\ast}F'$ has a $\tilde{f}'$-horizontal component. Let $F_{C}'$ be the fiber of $f'$ which contains the curve $C$ with $C^2=0$.
Then, from Zariski's lemma, we see that $F_{C}'=aC$
for some positive integer $a$ and it follows $F_{C}'=\SigmaT^{\ast}F_{C}'$, since $C$ is a component
of $\Fix(G)$.
Hence 
$$
0<(\SigmaT^{\ast}F'.F_{C}')=(\SigmaT^{\ast}F'.\SigmaT^{\ast}F_{C}')=(F'.F_{C}')=0,
$$
a contradiction. 
Therefore $\SigmaT$ maps fibers to fibers, and descends down to give an automorphism $\SigmaT_{B'}:B' \to B'$. 
Furthermore we have the commutative diagram
\begin{equation}
\nonumber
 \vcenter{ \xymatrix{ 
 \ST \ar[r]^{\tilde{\theta}} \ar[d]_{\tilde{f}'} 
 & \WT \ar[d]^{\tilde{\PHI}'} \ar[r]^{\tilde{\PHI}}& B\\
 B' \ar[r]_{\theta'} 
 & D' ,& } } 
 \end{equation}
where $\theta':B'\to D':=B'/\langle\SigmaT_{B'}\rangle$ denotes the quotient map.
In order to complete the proof, it suffices to see that $D'=\mathbb{P}^1$. 
This can be shown as follows. Any general fiber of $\tilde{\PHI}$ is  $\tilde{\PHI'}$-horizontal by $\RT.\GAT>0$. 
Since $\tilde{\PHI}$ is ruled, we see that $\PROJ^1$ dominates $D'$ and it follows $D'=\PROJ^1$.
\end{proof}

The contraction $\PHI:\WT \to W$ is composed of several blowing-ups. 
We decompose it as $\psi=\check{\psi}\circ\bar{\psi}$ as follows.
Let $\bar{\psi}:\WT \to \overline{W}$ be the longest succession of blowing-downs such that 
we still have the morphism $\bar{\PHI '}$ satisfying $\tilde{\PHI'}=\bar{\PHI'}\circ\bar{\psi}$. 
Then we have the following commutative diagram.
\[
\xymatrix{ 
 & \PROJ^1  &  \\
\WT \ar[ru]^{\tilde{\PHI}'}  \ar[r]^{\bar{\psi}} \ar[rd]_{\tilde{\PHI}} 
& \overline{W} \ar[d]^{\bar{\PHI}} \ar[r]^{\check{\psi}} \ar[u]^{\bar{\PHI}'}
& W \ar[ld]^{\PHI}\\
& B &
}
\]
Let $\overline{R}:=\bar{\psi}_{\ast}\RT$ be the image of $\RT$ by $\bar{\psi}$.

\begin{lem}
\label{prop3.3}
The morphism $\check{\psi}:\overline{W}\to W$ is not the identity map.
\end{lem}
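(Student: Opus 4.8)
The plan is to prove $\check{\psi}\neq \mathrm{id}$ by contradiction: suppose $\check{\psi}$ is the identity, so that $\overline{W}=W$ and $\bar{\psi}=\psi$. This means the \emph{entire} succession of blowing-downs $\psi:\WT\to W$ already factors through the fibration $\tilde{\PHI}':\WT\to\PROJ^1$, i.e.\ $\tilde{\PHI}'=\PHI'\circ\psi$ for a morphism $\PHI':W\to\PROJ^1$. I want to show this forces a contradiction with the structure established in Proposition~\ref{prop3.1}, most naturally with the fact that $\RT$ is $\tilde{\PHI}'$-vertical together with $q_f>0$.

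First I would record what $\check{\psi}=\mathrm{id}$ buys us: the horizontal ruling $\varphi:W\to B$ and the new fibration $\varphi':W\to\PROJ^1$ would both be defined on the \emph{minimal} ruled surface $W$, and a general fiber $\GA$ of $\varphi$ is $\varphi'$-horizontal (as in the end of Proposition~\ref{prop3.1}, since $\RT.\GAT>0$ means $\GA$ is not contracted). Since $W$ is a $\PROJ^1$-bundle over $B$, having a second fibration $\varphi'$ to $\PROJ^1$ whose general fiber meets $\GA$ nontrivially is very restrictive. The image $\overline{R}=R$ of $\RT$ is $\varphi'$-vertical by Proposition~\ref{prop3.1}, so $R$ would be a union of fibers of $\varphi'$ on $W$. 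I would then analyze the numerical class of $R$ on $W$: writing a $\varphi'$-fiber as $a C_0+b\GA$ in terms of the minimal section $C_0$ and $\GA$, being a fiber of a base-$\PROJ^1$ fibration forces self-intersection zero and $\GA$-degree equal to $\deg\varphi'|_\GA$, which pins down $a$ and $b$ tightly.

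The key tension I would exploit is between $R$ being $\varphi'$-vertical and the irregularity hypothesis $q_f>0$. The reasoning in Lemma~\ref{lem_added} already classifies, via $\RT^2\leq 0$ from $(\ref{(2.1)})$, the possible numerical types of components of a branch curve with nonpositive self-intersection on a minimal ruled surface, and shows that the only configurations compatible with $q_f>0$ and non-local-triviality are the ``type (iii)'' ones consisting of smooth curves of self-intersection $0$ unramified over $B$. I would argue that if $R$ were $\varphi'$-vertical on $W$ itself, then $R$ and its branch data would be pulled back from $\PROJ^1$, which would make $\tilde\theta$ (and hence $f$) essentially a pullback of a cyclic cover over $\PROJ^1$ along $\theta':B'\to\PROJ^1$ — forcing $f$ to be isotrivial, and in fact locally trivial, contradicting the standing hypothesis that $f$ is not locally trivial. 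The point is that $\check{\psi}=\mathrm{id}$ would mean no blow-up was needed to separate the two fibration structures on the \emph{minimal} model, i.e.\ the branch locus is already fibered over $\PROJ^1$ before any modification.

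The main obstacle I expect is making the last implication — ``$R$ is $\varphi'$-vertical on the minimal $W$ $\Rightarrow$ $f$ is locally trivial'' — fully rigorous, since it requires tracking how the two rulings of $W$ interact with the singular fibers of $\tilde\PHI$ and with the branch divisor, and ruling out the possibility that $R$ is genuinely fibered over $\PROJ^1$ in a nontrivial way while $f$ stays non-isotrivial. A cleaner route, which I would pursue first, is to avoid the full local-triviality argument and instead derive a direct contradiction: if $\check\psi=\mathrm{id}$, then by the maximality in the definition of $\bar\psi$ there would be no blow-down left to perform compatibly with $\tilde\PHI'$, yet the component $C\subset\Fix(G)$ with $C^2=0$ producing the fibration $\tilde f'$ must come from a fiber of $\tilde\PHI'$ whose class on $W$ cannot simultaneously be $\varphi'$-vertical and meet the ruling $\GA$ positively unless a blow-up intervened. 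I expect the honest core of the proof to reduce to showing that the two $\PROJ^1$-fibration structures on a minimal ruled surface cannot both be present in the required way, so that at least one blow-down in $\psi$ must genuinely mix them, forcing $\check\psi\neq\mathrm{id}$.
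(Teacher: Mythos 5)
Your overall setup does match the paper's: assume $\check{\psi}=\mathrm{id}$, use Proposition~\ref{prop3.1} to see that $R=\overline{R}$ is $\bar{\PHI}'$-vertical on the minimal ruled surface $W$, invoke the classification of irreducible curves $D\subset W$ with $D^2\leq 0$ worked out in the proof of Lemma~\ref{lem_added}, and aim at contradicting non-local-triviality. But the one implication you defer --- ``$R$ is $\bar{\PHI}'$-vertical on the minimal $W$ $\Rightarrow$ $f$ is locally trivial'' --- is precisely the content of the paper's proof, and neither of your two proposed ways of supplying it works. The ``pullback'' argument is unjustified: a $\bar{\PHI}'$-vertical reduced divisor need not be the pullback of a divisor on $\PROJ^1$ (it need not contain whole fibers of $\bar{\PHI}'$, and it ignores multiple fibers), so you cannot conclude that $\tilde{\theta}$ is obtained by base change from a cover of $\PROJ^1$ along $\theta'$. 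Worse, your ``cleaner route'' rests on a false premise: two fibration structures \emph{can} coexist on a minimal ruled surface. For $W=\PROJ^1\times B$ the two projections do exactly this, and a fiber $\{x\}\times B$ of the projection to $\PROJ^1$ is vertical for that projection while meeting every ruling fiber $\GA$ positively. There is no numerical obstruction of the kind you describe, so ``at least one blow-down must mix the two fibrations'' cannot be established this way.

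What actually closes the gap in the paper is an analysis of the fibers of $\bar{\PHI}':W\to\PROJ^1$ themselves. By Zariski's lemma every irreducible component $D$ of such a fiber has $D^2\leq 0$, hence is smooth by the classification in the proof of Lemma~\ref{lem_added}; if some fiber had reducible support, each of its components would have strictly negative self-intersection and would therefore have to be the minimal section $C_0$, which is unique --- impossible. So every fiber of $\bar{\PHI}'$ is irreducible (at worst multiple) with smooth support. Since $R$ is reduced and vertical, $R$ is a disjoint union of such smooth supports, each of self-intersection $0$ and unramified over $B$ via $\PHI$; this is exactly configuration (iii) in the proof of Lemma~\ref{lem_added}, which forces $f$ to be locally trivial (for instance, one computes $K_{\PHI}.R=R^2=0$ and all $\alpha_k=0$, whence $\chi_f=0$), contradicting the standing hypothesis. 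Note also a small misreading on your part: Lemma~\ref{lem_added} does not exhibit type (iii) as ``compatible with $q_f>0$ and non-local-triviality''; it shows that type (iii) \emph{implies} local triviality, and that implication is the lever the proof needs.
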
  

\begin{proof}
We will prove this by contradiction.
Suppose that $\check{\psi}$ is the identity map.
As one sees from the proof of Lemma~\ref{lem_added}, any irreducible curve $D$ on $W$ with $D^2\leq 0$ 
is smooth, and $\varphi|_D:D\to B$ is an unramified covering when $D^2=0$ and $D$ is not a fiber of $\varphi$.
Hence, any irreducible fiber of $\bar{\PHI'}:W\to \PROJ^1$ has to be smooth.
Suppose that there is a fiber of $\bar{\PHI'}$ whose reduced scheme is reducible, and take an irreducible component $D_0$.
Then $D_0^2<0$ and, from the proof of Lemma~\ref{lem_added}, we conclude that $D_0$ coincides with the minimal section.
The unicity implies that we cannot have such reducible singular fibers.
Therefore, a singular fiber of $\bar{\varphi'}$, if any, is a multiple fiber whose support is 
a smooth irreducible curve.
Since $R$ is a reduced divisor with support in fibers of $\bar{\varphi'}$ by Proposition~\ref{prop3.1}, 
we see that $R$ is smooth and $\PHI|_{R}:R\to B$ is unramified. 
Then $f:S\to B$ is a locally trivial fibration, which is inadequate. 
\end{proof}

Assume that $\theta':B' \to \mathbb{P}^1$ is branched over $\Delta \subset \mathbb{P}^1$. 
For any $y \in \Delta $, let $\widetilde{\Gamma_{y}'}=\sum \tilde{n}_{C}C$ be the fiber of $\tilde{\PHI'}$ over $y$, and put
$$
\RT_{all}:= \tilde{\PHI '}^{\ast}\Delta,\quad 
\RT_{r}:=\sum_{y\in \Delta,\;C\subset \widetilde{\Gamma_{y}'},\;\tilde{n}_{C}=1}C.
$$

\begin{lem}
In the above situation, $\RT_{r} \preceq \RT.$
\label{lem3.4}
\end{lem}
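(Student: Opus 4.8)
The plan is to prove the contrapositive: if $C$ is an irreducible component of the fibre $\widetilde{\Gamma_{y}'}={\tilde{\PHI}'}^{\ast}(y)$ over a branch point $y\in\Delta$ of $\theta'$ and $\tilde{n}_{C}=1$, then necessarily $C\subset\RT$. So I would assume $C\not\subset\RT$ and aim at a contradiction with $y\in\Delta$. The starting point is the commutativity of the square in Proposition~\ref{prop3.1}, namely $\tilde{\PHI}'\circ\tilde{\theta}=\theta'\circ\tilde{f}'$, which yields the divisor identity
\[
\tilde{\theta}^{\ast}\widetilde{\Gamma_{y}'}={\tilde{f}'}^{\ast}({\theta'}^{\ast}y)=\sum_{x\in{\theta'}^{-1}(y)}e_{x}\,{\tilde{f}'}^{\ast}x,
\]
where $e_{x}$ denotes the ramification index of $\theta'$ at $x$.

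Next I would locate the components $C_{1}',\dots,C_{s}'$ of $\tilde{\theta}^{-1}(C)$. Each maps to $C\subset\widetilde{\Gamma_{y}'}$ under $\tilde{\theta}$, hence to the single point $y$ under $\tilde{\PHI}'\circ\tilde{\theta}=\theta'\circ\tilde{f}'$; since $\tilde{f}'(C_{i}')$ cannot be all of $B'$, it is a point $x_{i}\in{\theta'}^{-1}(y)$, so $C_{i}'$ is a vertical curve contained in the fibre ${\tilde{f}'}^{\ast}x_{i}$. Comparing the coefficient of a fixed $C_{i}'$ on the two sides of the identity above is then the heart of the matter. Because $C\not\subset\RT$, the classical cyclic cover $\tilde{\theta}$ is unramified in codimension one over $C$, so $\tilde{\theta}^{\ast}C=\sum_{j}C_{j}'$ with each $C_{j}'$ of multiplicity one; together with $\tilde{n}_{C}=1$ this makes the coefficient of $C_{i}'$ on the left equal to $1$. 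On the right it equals $e_{x_{i}}\cdot\bigl(\text{mult.\ of }C_{i}'\text{ in }{\tilde{f}'}^{\ast}x_{i}\bigr)$. I therefore conclude $e_{x_{i}}=1$: every point of ${\theta'}^{-1}(y)$ lying beneath a preimage of $C$ is an unramified point of $\theta'$.

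Finally I would invoke equivariance to upgrade this to all of ${\theta'}^{-1}(y)$. As $\tilde{\theta}$ is Galois with group $G=\langle\SigmaT\rangle$, the group $G$ permutes the components $C_{1}',\dots,C_{s}'$ transitively; and since $\SigmaT$ descends to the automorphism $\SigmaT_{B'}$ of $B'$ with $\tilde{f}'\circ\SigmaT=\SigmaT_{B'}\circ\tilde{f}'$, the induced action on $\{x_{1},\dots,x_{s}\}$ is a transitive action of $\langle\SigmaT_{B'}\rangle$. By Proposition~\ref{prop3.1} the map $\theta':B'\to\PROJ^{1}$ is the quotient by $\langle\SigmaT_{B'}\rangle$, hence Galois, so $\langle\SigmaT_{B'}\rangle$ acts transitively on ${\theta'}^{-1}(y)$; the nonempty invariant subset $\{x_{1},\dots,x_{s}\}$ must then coincide with ${\theta'}^{-1}(y)$. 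Thus every ramification index of $\theta'$ over $y$ equals $1$, that is $y\notin\Delta$, which is the desired contradiction.

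The step I expect to be the most delicate is the coefficient comparison in the second paragraph: one must be sure that $C\not\subset\RT$ genuinely forces multiplicity one for each $C_{j}'$ in $\tilde{\theta}^{\ast}C$ (the codimension-one unramifiedness of a classical cyclic cover away from its branch divisor), and that the vertical curves $C_{i}'$ are honest reduced components, so that reading off their coefficients on both sides of the pullback identity is legitimate. The equivariance step is conceptually transparent but hinges on the fact, supplied precisely by $G$-transitivity, that every point of ${\theta'}^{-1}(y)$ really carries a preimage of $C$; without it one could only control the ramification at some of the points over $y$.
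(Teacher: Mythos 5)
Your proof is correct, and it takes a genuinely different route from the paper's. The paper argues through an intermediate quotient: it introduces the subgroup $G_{\tilde{f}'}\subset G$ of elements preserving every fibre of $\tilde{f}'$, proves $\Fix(G_{\tilde{f}'})=\Fix(G)$ using that $\tilde{\theta}$ is totally ramified, concludes that $\ST/G_{\tilde{f}'}$ is smooth and that the induced covering $\pi:\ST/G_{\tilde{f}'}\to\WT$ has branch locus exactly $\RT$, and then checks in analytic local coordinates that near a multiplicity-one fibre component $C$ over $y\in\Delta$ this quotient agrees with the fibre product $B'\times_{\PROJ^1}\WT$, whose projection to $\WT$ is visibly ramified along $C$; hence $C\preceq R_{\pi}=\RT$. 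You dispense with the quotient surface entirely: the coefficient comparison in the identity $\tilde{\theta}^{\ast}\tilde{\PHI}'^{\ast}y=\tilde{f}'^{\ast}\theta'^{\ast}y$, combined with \'etaleness of the classical cyclic covering off $\RT$ and with descent of the transitive $G$-action on $\{C_1',\dots,C_s'\}$ to a transitive $\langle\SigmaT_{B'}\rangle$-action on $\theta'^{-1}(y)$, shows directly that $C\not\subset\RT$ together with $\tilde{n}_C=1$ would force $\theta'$ to be unramified over $y$. Both proofs rest on the same two inputs---the local structure of a classical cyclic cover off its branch divisor, and the fact (from the proof of Proposition~\ref{prop3.1}) that $\theta'$ is the quotient of $B'$ by $\langle\SigmaT_{B'}\rangle$, so its fibres are single orbits---but yours is the more economical: it needs no smoothness or normality analysis of $\ST/G_{\tilde{f}'}$, no identification of $R_{\pi}$ with $\RT$, and no local coordinates; in particular it treats arbitrary ramification indices of $\theta'$ over $y$ uniformly, whereas the paper's coordinate choice $\theta'^{\ast}x=w^{\DEG\,\theta'}$ tacitly presumes total ramification at the chosen point. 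What the paper's route buys in exchange is the auxiliary covering $\pi$ and the equality of its branch locus with $\RT$, structural facts of independent interest. The two steps you flagged as delicate are indeed the crux, and both are sound: a classical $n$-cyclic covering is \'etale off its branch divisor (locally $z^n=f$ with $f\neq0$, in characteristic zero), so each $C_j'$ has multiplicity one in $\tilde{\theta}^{\ast}C$; and $G$ permutes $C_1',\dots,C_s'$ transitively because $\WT$ is smooth, hence every component of $\tilde{\theta}^{-1}(C)$ dominates $C$ by going-down, and the fibre of a quotient by a finite group over the generic point of $C$ is a single orbit.
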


\begin{proof}
We put 
$$
G_{\tilde{f'}}:=\{\tau \in G \mid \tau(\widetilde{F}')=\widetilde{F}' \text{ for any fiber } \widetilde{F}'
\text{ of }\tilde{f'}\}
$$
Since $\FDT \circ \tau = \tau$ for any $\tau \in G_{\FDT}$, the morphism $\FDT$ induces the morphism $\pi:\ST/G_{\FDT} \to \WT$ and 
we have the following commutative diagram.
\begin{align}
\nonumber
\xymatrix{ 
\ST \ar[rd] \ar@/^18pt/[rrd]^{\tilde{\theta}} \ar@/_18pt/[ddr]_{\tilde{f}'}&   &  \\
& \ST/G_{\tilde{f'}} \ar[d] \ar[r]^{\pi}
& \WT \ar[d]^{\tilde{\PHI}'} \\
& B' \ar[r]_{\theta'} & \PROJ^1
}
\end{align}
Note that the degree of $\pi$ is equal to that of $\theta'$.
We claim that $\Fix(G_{\FDT})=\Fix(G)$.
This can be see as follows.
It is clear that  $\Fix(G_{\FDT})\supset\Fix(G)$.
If there is a point $x \in  \Fix(G_{\FDT})\setminus\Fix(G)$, 
then we have $\SigmaT(x)\neq x$ for the generator $\SigmaT$ of $G$.
On the other hand, since $G_{\FDT}$ is a subgroup of $G$ of order $n/\DEG\,\theta '$, we have $G_{\FDT}=\langle\SigmaT^{\DEG\,\theta '}\rangle$.
Hence the number of $G$-orbits of $x$ is at most $n/\DEG\,\theta '$.
This contradicts that $\tilde{\theta}: \ST \to \WT$ is totally ramified.
Therefore $\Fix(G_{\FDT})=\Fix(G)$. Hence $\ST/G_{\FDT}$ is smooth.
Let $R_{\pi}$ be the branch locus of $\pi$. 
Since $\tilde{\theta}$ is totally ramified, one can check easily that $R_{\pi}=\RT$.
Hence it is sufficient to prove that $\RT_{r}\leq R_{\pi}$.
Let $C$ be any component of $\RT_{r}$.
We can take analytic local coordinates $(U_\mathbb{P}^1,x)$ on $\mathbb{P}^1$, $(U_{\WT},y,z)$ on $\WT$ and $(U_{B'},w)$ on $B'$ such that $\tilde{\PHI '}(C)$ is defined by $x=0$, $C$ is defined by $y=0$, $\theta'^{\ast}x=w^{\DEG\,\theta'}$ and $\bar{\PHI '}^{\ast}x=y$.
$U_{B'}\times_{\mathbb{P}^1}U_{\WT}$ is defined by $y=w^{\DEG\,\theta'}$ in $U_{B'}\times U_{\WT}$.
So $ U_{B'}\times_{\mathbb{P}^1}U_{\WT} \to U_{\WT}$ is ramified over $C\cap U_{\WT}$ and $U_{B'}\times_{\mathbb{P}^1}U_{\WT} $ is smooth.
Hence the natural morphism $\ST/G_{\FDT} \to B'\times_{\mathbb{P}^1}\WT$ is an isomorphism around $U_{B'}\times_{\mathbb{P}^1}U_{\WT} $. 
Therefore we get $C\preceq R_{\pi}=\RT$.
\end{proof}

We suppose that $\check{\psi}=\check{\psi}_{1}\circ\;\cdots\;\circ\check{\psi}_{u}$, where $\check{\psi}_{i}:\check{W_{i}}\to\check{W}_{i-1}$ is 
a blowing-up at $\check{x}_{i-1}\in \check{W}_{i-1}$ with exceptional curve $\check{\mathcal{E}_{i}}\subset\WC$, $\WC_{0}=W$ and $\WC_{u}=\overline{W}$.
Let $\check{R}_{i}$ be the image of $\overline{R}$ in $\WC_{i}$, and let $\check{x_{i}}$ be a singular point of $\check{R_{i}}$ of multiplicity $\check{m_{i}}$.

\begin{lem}
\label{lemma3.5}
Assume that $n \geq 3$. For $1\leq i \leq u-1$, we have $\check{m}_{i} \geq \frac{2q_{f}}{n-1}+2$.
Moreover if there is $\check{m}_{i}$ such that equality sign holds, then $ \frac{2q_{f}}{n-1}+2 \in n\Z$ and 
$\deg \theta'=n$.
\end{lem}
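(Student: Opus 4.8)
The plan is to bound $\check{m}_i$ from below by exhibiting, over the exceptional curve $\check{\mathcal{E}}_{i+1}$ of the blowing-up at $\check{x}_i$, an irreducible curve on $\ST$ that is \emph{not} contracted by $\alpha_{\SigmaT}$, and then to play its geometric genus against the number of branch points of $\tilde{\theta}$ lying on it. By Proposition~\ref{prop2.2}~(3) and Proposition~\ref{prop3.1} we have $g(B')\ge q_f$, and since $\tilde{f}'$ is the Stein factorization of $\alpha_{\SigmaT}$, any curve on $\ST$ that is not $\tilde{f}'$-vertical dominates $B'$ and hence has geometric genus at least $q_f$. The crucial point is therefore that the proper transform $\tilde{D}$ on $\ST$ of $\check{\mathcal{E}}_{i+1}$ is not $\tilde{f}'$-vertical; equivalently, since $\tilde{\PHI}'\circ\tilde{\theta}=\theta'\circ\tilde{f}'$ with $\theta'$ finite, that the transform of $\check{\mathcal{E}}_{i+1}$ on $\overline{W}$ is $\bar{\PHI}'$-horizontal and is not a component of $\overline{R}$.

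Granting this, I would argue exactly as in Lemma~\ref{lem2.5}. Let $\tilde{E}$ denote the proper transform of $\check{\mathcal{E}}_{i+1}$ on $\WT$; then $\tilde{\theta}|_{\tilde{D}}:\tilde{D}\to\tilde{E}\cong\PROJ^1$ is a cyclic covering of degree $n$, totally ramified over $\tilde{E}\cap\RT$, and Hurwitz's formula applied to its normalization gives
$$
2g(\tilde{D})-2=-2n+(n-1)\,\sharp B_\pi ,
$$
where $B_\pi$ is the branch locus. Combining $g(\tilde{D})\ge q_f$ with $\sharp B_\pi\le \tilde{E}.\RT\le \check{m}_i$ — the intersection of the exceptional curve with the branch locus being at most the multiplicity blown up, as in Lemma~\ref{lem2.5} — yields
$$
q_f\le \frac{(n-1)\check{m}_i}{2}-n+1 ,
$$
which is precisely $\check{m}_i\ge \frac{2q_f}{n-1}+2$.

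The main obstacle is the horizontality claim, and this is where the range $1\le i\le u-1$ and the assumption $n\ge 3$ are needed. Because $\bar{\psi}$ is the \emph{longest} succession of blow-downs through which $\tilde{\PHI}'$ still factors, the surface $\overline{W}$ carries no $\bar{\PHI}'$-vertical $(-1)$-curve, so $\bar{\PHI}'$ is relatively minimal; the curves contracted by $\check{\psi}$ are exactly those that resolve the base points of the induced pencil on $W$, and after such a blowing-up the exceptional curve meets the general fibre of $\bar{\PHI}'$, hence is horizontal. The index $i=0$ must be excluded because $\check{x}_0$ lies on the relatively minimal ruled surface $W$, where Lemma~\ref{lem1.4} bounds $\check{m}_0\le r/2$, so the asserted inequality can genuinely fail there. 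The point on which I expect to spend the most care is the bookkeeping for towers of infinitely near base points together with the dichotomy $\check{m}_i\in n\Z$ versus $\check{m}_i\in n\Z+1$ of Lemma~\ref{lem1.2}: when $\check{m}_i\in n\Z+1$ the exceptional curve is a component of $\RT$ and so cannot serve as $\tilde{E}$, and one must verify that for $1\le i\le u-1$ the transform of $\check{\mathcal{E}}_{i+1}$ genuinely stays horizontal rather than sliding into a fibre of $\bar{\PHI}'$.

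For the equality statement I would trace the two inequalities back. Equality in $\sharp B_\pi\le \tilde{E}.\RT\le\check{m}_i$ forces $\tilde{E}$ to meet $\RT$ transversally in exactly $\check{m}_i$ points and, in particular, not to be contained in $\RT$; by Lemma~\ref{lem1.2} this occurs only when $\check{m}_i\in n\Z$, so $\frac{2q_f}{n-1}+2\in n\Z$. Equality $g(\tilde{D})=q_f=g(B')$ forces the induced morphism $\tilde{D}\to B'$ to be birational; comparing the degrees of $\tilde{D}\to\tilde{E}\to\PROJ^1$ and of $\tilde{D}\to B'\xrightarrow{\theta'}\PROJ^1$ then gives $\deg\theta'=n$, since $\deg\theta'$ divides $n$.
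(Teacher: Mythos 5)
Your central idea---that a curve on $\ST$ not contracted by $\AL_{\SigmaT}$ must dominate $B'$, hence has geometric genus at least $g(B')\geq q_f$, after which the Hurwitz formula together with Lemma~\ref{lem3.2} forces its image in $\WT$ to meet $\RT$ in at least $\frac{2q_f}{n-1}+2$ points---is sound, and it is essentially the mechanism of Lemma~\ref{lem2.5} and Proposition~\ref{prop2.6} run in reverse. But your proof hinges entirely on the claim that the transform of $\check{\mathcal{E}}_{i+1}$ on $\overline{W}$ is $\bar{\PHI}'$-horizontal and not a component of the branch locus, and this claim is not only left unproven, it is false as stated. When $\check{m}_i\in n\Z+1$ (a case that genuinely occurs; cf.\ Lemma~\ref{lem1.4} and the proof of Theorem~\ref{xiaoconj}), Lemma~\ref{lem1.2}~(1) makes $\check{\mathcal{E}}_{i+1}$ a component of the branch locus; since $\RT$ is $\tilde{\PHI}'$-vertical by Proposition~\ref{prop3.1}, its transform on $\ST$ lies in $\Fix(G)$, is rational, and \emph{is} contracted by $\AL_{\SigmaT}$ (Proposition~\ref{prop2.2}~(2)), so the genus argument collapses exactly there. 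Even when $\check{m}_i\in n\Z$, the proper transform of $\check{\mathcal{E}}_{i+1}$ on $\overline{W}$ is in general not a $(-1)$-curve (later centers of blowing-ups may lie on it), and the maximality of $\bar{\psi}$ only rules out \emph{vertical $(-1)$-curves}; an intermediate exceptional curve can perfectly well sit inside a reducible fiber of $\bar{\PHI}'$ without contradicting anything, and then its preimage upstairs is again $\tilde{f}'$-vertical and contracted. You flag both of these as points ``needing care,'' but they are the crux of the lemma, not bookkeeping.

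The repair is precisely what the paper does, and it is a step your proposal omits: apply the argument only to the exceptional curve $\mathcal{E}_{i+j_i}$ of the \emph{last} singular point $\check{x}_{i+j_i}$ infinitely near to $\check{x}_i$. That curve is a $(-1)$-curve on $\overline{W}$, hence $\bar{\PHI}'$-horizontal (a vertical $(-1)$-curve could be contracted with $\bar{\PHI}'$ still descending, contradicting the maximality of $\bar{\psi}$), hence not a component of the vertical divisor $\overline{R}$, so $\check{m}_{i+j_i}\in n\Z$ and $\mathcal{E}_{i+j_i}.\overline{R}=\check{m}_{i+j_i}$; one then needs Lemma~\ref{lem2.4}---this is where $n\geq 3$ actually enters, and it appears nowhere in your argument---to transfer the bound back via $\check{m}_i\geq\check{m}_{i+j_i}$. (For the lower bound itself the paper counts differently from you: it works downstairs on $\overline{W}$, intersecting $\mathcal{E}_{i+j_i}$ with the reduced fiber components $\overline{R}_r$ over the branch points of $\theta'$ via Lemma~\ref{lem3.4} and Hurwitz for $\bar{\PHI}'|_{\mathcal{E}}$; your upstairs genus computation could serve as an alternative once restricted to these curves.) Two smaller corrections: your Hurwitz formula should be the inequality $2g(\tilde{D})-2+2n\leq (n-1)\sharp B_\pi$ obtained from Lemma~\ref{lem3.2}, since the intersections of $\tilde{E}$ with $\RT$ need not be transverse; and in the equality analysis, $g(\tilde{D})=g(B')$ does not force $\tilde{D}\to B'$ to be birational when $q_f=1$. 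Finally, your rationale for excluding $i=0$ is mistaken: Lemma~\ref{lem1.4} gives $\check{m}_0\leq r/2$, which is compatible with the asserted lower bound---indeed the proof of Theorem~\ref{xiaoconj} uses exactly the conjunction of the two.
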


\begin{proof}
Let $\mathcal{E}\subset\overline{W}$ be any $(-1)$-curve contracted by $\check{\psi}$.
Note that $\bar{\PHI'}|:\mathcal{E} \to \mathbb{P}^1$ is surjective and $\mathcal{E}.\overline{R} \in n\Z$. 
We will show that $\mathcal{E}.\overline{R} \geq \frac{2q_{f}}{n-1}+2$.
If $n\geq \frac{2q_f}{n-1}+2$, then the assertion is clear from Lemma~\ref{lem1.2}, (1).
Hence we may assume that $n<\frac{2q_f}{n-1}+2$ in the following.
In particular, we have $q_f\geq n-1$.

By Lemma $\ref{lem3.4}$, it is enough to show that  $\mathcal{E}.\overline{R}_{r} \geq \frac{2q_{f}}{n-1}+2$
where $\overline{R}_{r}$ is image of $\RT_{r}.$
Let $\mathcal{R}_{\theta'}$ be the ramification divisor of $\theta':B' \to \PROJ^1$.
Since $g(B')\geq q_f>0$, we have $\deg \theta'>1$.
From the Hurwitz formula, we get 
\begin{align}
\deg \mathcal{R}_{\theta'}=2g(B')-2+2\deg \theta'.
\label{(3.4)}
\end{align}
By Lemma $\ref{lem3.2}$ and $(\ref{(3.4)})$, we get $(\deg\theta'-1)\sharp \Delta \geq 2g(B')-2+2\deg \theta'$, that is, 
\begin{align*}
\sharp \Delta \geq \frac{2}{\deg \theta'-1}g(B')+2.
\end{align*}
We put $\overline{R}_{all}:=\bar{\psi'}^{\ast}\Delta$. For any $p\in\mathcal{E}\cap\overline{R}_{all}$, let $r_{p}:=I_{p}(\mathcal{E}.\overline{R}_{all})$ be the local intersection number.
Since $\overline{R}_{all}=\bar{\psi'}^{\ast}\Delta$ consists of $\sharp \Delta$ fibers of $\bar{\PHI'}$, one has 
\begin{align}
\nonumber
\sum_{\mathcal{E}\cap\overline{R}_{all}} r_{p}=(\mathcal{E}.\overline{R}_{all}) & =\deg(\overline{\PHI'}|_{\mathcal{E}})\sharp \Delta \\
& \geq (\deg \bar{\PHI'}|_{\mathcal{E}})\left(\frac{2}{\deg\theta'-1}g(B')+2\right).
\label{(3.5)}
\end{align}
By definition, $r_{p}\geq2$ for any  $p\in(\mathcal{E}\cap\overline{R}_{all})\setminus(\mathcal{E}\cap\overline{R}_{r})$.
On the other hand, by the Hurwitz formula for $\bar{\PHI'}|_{\mathcal{E}}:\mathcal{E} \to \PROJ^1$, one has 
$$
2\deg \bar{\PHI'}|_{\mathcal{E}}-2 = \deg \mathcal{R}_{\theta'} \geq \sum_{\mathcal{E}\cap\overline{R}_{all}} (r_{p}-1).
$$
Hence, 
\begin{align*}
2\deg \bar{\PHI'}|_{\mathcal{E}}-2&\geq\sum_{\mathcal{E}\cap\overline{R}_{all}} (r_{p}-1)
=\sum_{(\mathcal{E}\cap\overline{R}_{all})\setminus(\mathcal{E}\cap\overline{R}_{r})}(r_{p}-1) +\sum_{\mathcal{E}\cap\overline{R}_{r}}(r_{p}-1)\\
&\geq \sum_{(\mathcal{E}\cap\overline{R}_{all})\setminus(\mathcal{E}\cap\overline{R}_{r})}\frac{r_{p}}{2}
+\sum_{\mathcal{E}\cap\overline{R}_{r}}\frac{(r_{p}-1)}{2}
=\sum_{\mathcal{E}\cap\overline{R}_{all}}\frac{r_{p}}{2}-\frac{\sharp(\mathcal{E}\cap\overline{R}_{r})}{2}\\
&\geq (\deg \bar{\PHI'}|_{\mathcal{E}})\left(\frac{1}{\deg \theta'-1}g(B')+1\right)-\frac{\sharp(\mathcal{E}\cap\overline{R}_{r})}{2},
\end{align*}
where the last inequality comes from $(\ref{(3.5)})$.

Note that we have $g(B')\geq q_f\geq n-1\geq \deg \theta'-1$.
Therefore 
\begin{align*}
(\mathcal{E}.\overline{R})\geq(\mathcal{E}.\overline{R}_{r})\geq \sharp(\mathcal{E}\cap\overline{R}_{r})
& \geq (\deg \bar{\PHI'}|_{\mathcal{E}})\left(\frac{2}{\deg \theta'-1}g(B')-2\right)+4\\
&\geq\frac{2}{\deg \theta'-1}g(B')+2\\
&\geq\frac{2}{\deg \theta'-1}q_{f}+2.
\end{align*}
For any $\check{x_{i}}$, let $\check{x}_{i+j_{i}}$ be the last infinitely near singular point blown up by $\check{\psi}$, $\mathcal{E}_{i+j_{i}}$ the exceptional curve.
Then, from the above argument, we get 
$$
\frac{2}{\deg \theta'-1}q_{f}+2 \leq (\mathcal{E}_{i+j_{i}}.\overline{R})=\check{m}_{i+j_{i}}\leq \check{m_{i}}.
$$
Moreover if the equality signs hold everywhere, then we get
$$
\check{m_{i}}= \frac{2}{\deg \theta'-1}q_{f}+2= (\mathcal{E}_{i+j_{i}}.\overline{R})\in n\Z.
$$
Since $\deg \theta'\leq n$, we are done.
\end{proof}

\begin{thm}
\label{xiaoconj}
Let $f:S\to B$ be a locally non-trivial primitive cyclic covering fibration of type $(g,0,n)$ 
with $q_{f}>0$ and $n\geq 3$.
Assume that there is a component C of $\Fix(\SigmaT)$ such that $C^2=0$.
Then,
 \begin{align*}
 q_{f}\leq \frac{g-n+1}{2}.
 \end{align*}
\end{thm}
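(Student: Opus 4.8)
The plan is to combine the fibration structure produced in Proposition~\ref{prop3.1} with the multiplicity estimate underlying Lemma~\ref{lemma3.5}, the decisive extra input being that the relevant intersection number is a multiple of $n$. Since there is a component $C$ of $\Fix(G)$ with $C^2=0$, Proposition~\ref{prop2.2} guarantees that $\AL_{\SigmaT}(\ST)$ is a curve, so Proposition~\ref{prop3.1} applies and yields the fibrations $\FDT:\ST\to B'$ and $\tilde{\PHI}':\WT\to\PROJ^1$ together with the quotient $\theta':B'\to\PROJ^1$, where $\RT$ is $\tilde{\PHI}'$-vertical and $q_f\le g(B')$. I would then work with the decomposition $\psi=\check{\psi}\circ\bar{\psi}$ introduced before Lemma~\ref{prop3.3}.

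First I would invoke Lemma~\ref{prop3.3} to ensure that $\check{\psi}\colon\overline{W}\to W$ is not the identity, so that $\check{\psi}$ contains at least one blowing-up $\check{\psi}_1$ whose center $\check{x}_0\in W$ is a singular point of $R$; write $m_0=\mathrm{mult}_{\check{x}_0}R$, noting $m_0\ge n$ by the singularity-index conventions. Following the chain of infinitely near points over $\check{x}_0$ down to the last one blown up by $\check{\psi}$, the associated exceptional curve $\mathcal{E}\subset\overline{W}$ is a $(-1)$-curve contracted by $\check{\psi}$, and the computation in the proof of Lemma~\ref{lemma3.5} shows simultaneously that $\mathcal{E}.\overline{R}\in n\Z$ and that
$$
\frac{2q_f}{n-1}+2\ \le\ \frac{2}{\deg\theta'-1}\,g(B')+2\ \le\ \mathcal{E}.\overline{R}\ \le\ m_0,
$$
where the first inequality uses $q_f\le g(B')$ together with $\deg\theta'\le n$, and the last one is the monotonicity of multiplicities along infinitely near points from Lemma~\ref{lem2.4}.

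The heart of the argument is then to exploit the divisibility $\mathcal{E}.\overline{R}\in n\Z$. Since $\mathcal{E}.\overline{R}$ is a multiple of $n$ not exceeding $m_0$, it is in fact at most $n\lfloor m_0/n\rfloor$; and as $\check{x}_0$ is a singular point of $R$, Corollary~\ref{lem1.4'} gives $n\lfloor m_0/n\rfloor\le r/2$. Feeding this into the previous display yields
$$
\frac{2q_f}{n-1}+2\ \le\ \frac{r}{2}\ =\ \frac{g}{n-1}+1,
$$
whence $2q_f\le g-(n-1)$, that is, $q_f\le\frac{g-n+1}{2}$, as claimed.

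I expect the main obstacle to be the bookkeeping in the middle step: correctly identifying the $(-1)$-curve $\mathcal{E}$ over $\check{x}_0$, verifying that $\mathcal{E}.\overline{R}$ equals the multiplicity of the last infinitely near point (so that the estimate of Lemma~\ref{lemma3.5} is applicable verbatim), and confirming $\mathcal{E}.\overline{R}\in n\Z$. It is worth emphasizing that the divisibility by $n$ is exactly what sharpens the result: had one only used $\mathcal{E}.\overline{R}\le m_0\le r/2+1$, the conclusion would degrade to the weaker bound $q_f\le g/2$, whereas forcing the estimate $r/2$ in place of $r/2+1$ produces the stated inequality.
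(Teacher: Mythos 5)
Your proposal is correct and follows essentially the same route as the paper: Lemma~\ref{prop3.3} to produce a singular point of $R$ blown up by $\check{\psi}$, Lemma~\ref{lemma3.5} for the lower bound $\frac{2q_f}{n-1}+2$ on the intersection number of the last exceptional curve with $\overline{R}$, and the bound $\frac{r}{2}$ coming from Lemma~\ref{lem1.4}. The only difference is organizational: where the paper splits into the cases $m\leq \frac{r}{2}$ and $m>\frac{r}{2}$ (handling the latter by the parity argument $\check{m}\in n\Z$, $m\in n\Z+1$, so $\check{m}+1\leq m$), you absorb both cases into a single step by combining the divisibility $\mathcal{E}.\overline{R}\in n\Z$ with Corollary~\ref{lem1.4'}, whose own proof is exactly that case split.
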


\begin{proof}
There is a singular point $x$ of $R$ which is blown up by $\check{\psi}:\overline{W} \to W$
from Lemma $\ref{prop3.3}$. If $m:=\mathrm{mult}_{x}R \leq \frac{r}{2}$, then 
\begin{align*} 
\frac{2q_{f}}{n-1}+2\leq m \leq \frac{r}{2}=\frac{g}{n-1}+1
\end{align*} 
from Lemma $\ref{lemma3.5}$. So we get $q_{f}\leq (g-n+1)/2$.
If $m>\frac{r}{2}$, then we have $m\in n\Z+1$ from Lemma~$\ref{lem1.4}$.
Let $\check{x}$ be the last singular point, infinitely near to $x$, blown up by $\check{\psi}$ 
and $\check{m}$ its multiplicity.
Then it holds that $\check{m} \in n\Z$.
Indeed, if $\check{m} \in n\Z+1$, the exceptional curve arizing from  
$\check{x}$ is contained in branch locus. 
It contradicts the definition of $\check{\psi}$.
So we get $\check{m}+1 \leq m$.
Therefore we get 
\begin{align*} 
\frac{2q_{f}}{n-1}+3\leq \check{m}+1 \leq m\leq \frac{r}{2}+1=\frac{g}{n-1}+2
\end{align*}
from Lemma $\ref{lem1.4}$ and Lemma $\ref{lemma3.5}$.
It follows $q_f\leq (g-n+1)/2$.
\end{proof}

Therefore, the Modified Xiao's Conjecture is true in this particular case.

\medskip

Now, we turn our attention to the slope.
Let $\check{\alpha}_{k}$ be the number of the singular points of $R$ with multiplicity $nk$ or $nk+1$ appearing in $\check{\psi}$.
Then $\check{\alpha}_{k}\geq0$ and by Lemma $\ref{lemma3.5}$ one has 
\begin{align}
\check{\alpha}_{k}=0
\label{(3.6)}
\end{align}
for any $k$ satisfying $nk+1 \leq \frac{2q_{f}}{n-1}+2$.
We put $\bar{\alpha}_{k}:=\AL_{k}-\ALC_{k}$ then by $(\ref{(3.6)})$,
\begin{align}
\numberwithin{equation}{section}
\ALB_{k}=\AL_{k}
\label{(3.7)}
\end{align}
for any $k$ satisfying $nk+1 \leq \frac{2q_{f}}{n-1}+2$.
By the construction of $\bar{\PHI'}$, $\overline{R}$ is contained in fibers of  $\bar{\PHI'}$,
hence we get
$$\overline{R}^2\leq0.$$
On the other hand, we have
$$\overline{R}^2=R^2-\sum_{\frac{2q_{f}}{n-1}+2\leq nk}^{nk\leq\frac{r}{2}}n^2 k^2 \ALC_{k}$$
As $R^2=2rM_{0}$ by ($\ref{(1.15)}$), we get 
\begin{align}
\numberwithin{equation}{section}
2rM_{0}\leq\sum_{\frac{2q_{f}}{n-1}+2\leq nk}^{nk\leq\frac{r}{2}}n^2 k^2 \ALC_{k}
\label{(3.8)}
\end{align}
Hence 
\begin{align*}
\AL_{0}+\sum_{k\geq1}^{nk+1 \leq \frac{2q_{f}}{n-1}+2}nk(nk-1)\AL_{k}&\leq\AL_{0}+ \sum_{k\geq 1}^{nk\leq\frac{r}{2}}nk(nk-1)\bar{\alpha}_{k}\\
&\leq\sum_{\frac{2q_{f}}{n-1}+2\leq nk}^{nk\leq\frac{r}{2}}\frac{nk}{r}(r-1-(nk-1))\ALC_{k}\\
&\leq\sum_{\frac{2q_{f}}{n-1}+2\leq nk}^{nk\leq\frac{r}{2}}\frac{nk}{r}(r-1-(nk-1))\AL_{k},
\end{align*}
where the first and the last inequalities above follow immediately from $\ALB_{k}\geq0$ and $(\ref{(3.7)})$, 
and the second one follows from ($\ref{(1.16)})$ and ($\ref{(3.8)})$.
Hence we have shown:

\begin{prop}
\label{prop3.7}
Under the same assumptions as in Proposition $\ref{prop3.1}$,  
$$
\AL_{0}+\sum_{k\geq1}^{nk+1 \leq \frac{2q_{f}}{n-1}+2}nk(nk-1)\AL_{k}\leq\sum_{\frac{2q_{f}}{n-1}+2\leq nk}^{nk\leq\frac{r}{2}}\frac{nk}{r}(r-1-(nk-1))\AL_{k}.
$$
\end{prop}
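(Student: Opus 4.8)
The plan is to deduce the whole inequality from one geometric fact: the push-forward $\overline{R}=\bar{\psi}_{\ast}\RT$ is supported in fibers of $\bar{\PHI}':\overline{W}\to\PROJ^1$. By Proposition~\ref{prop3.1} the divisor $\RT$ is $\tilde{\PHI}'$-vertical, hence so is $\overline{R}$, and Zariski's Lemma gives $\overline{R}^2\leq0$. Everything after this is bookkeeping of singularity indices across the two factorizations in $\psi=\check{\psi}\circ\bar{\psi}$, so the first thing I would do is pin down how $\overline{R}^2$ relates to $R^2$.

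Since $\check{\psi}:\overline{W}\to W$ blows up precisely the singular points of $R$ counted by the indices $\ALC_{k}$ (those of multiplicity $nk$ or $nk+1$ contracted by $\check{\psi}$), Lemma~\ref{lem1.2} yields $\overline{R}^2=R^2-\sum_{nk\leq r/2}n^2k^2\ALC_{k}$. By Lemma~\ref{lemma3.5}, equivalently $(\ref{(3.6)})$, one has $\ALC_{k}=0$ whenever $nk+1\leq\frac{2q_f}{n-1}+2$, so the sum effectively ranges over $\frac{2q_f}{n-1}+2\leq nk\leq r/2$. Inserting $R^2=2rM_0$ from $(\ref{(1.15)})$ and using $\overline{R}^2\leq0$ then produces exactly $(\ref{(3.8)})$, i.e. $2rM_0\leq\sum n^2k^2\ALC_{k}$.

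The core step is to feed this back into the identity $(\ref{(1.16)})$. Writing $\AL_{k}=\ALB_{k}+\ALC_{k}$ and subtracting the $\ALC_{k}$-part from $(\ref{(1.16)})$ gives $\AL_{0}+\sum nk(nk-1)\ALB_{k}=2(r-1)M_0-\sum nk(nk-1)\ALC_{k}$. Bounding $2(r-1)M_0\leq\frac{r-1}{r}\sum n^2k^2\ALC_{k}$ via $(\ref{(3.8)})$ and collecting the coefficient of each $\ALC_{k}$, the one nontrivial algebraic simplification is
$$\frac{r-1}{r}n^2k^2-nk(nk-1)=\frac{nk(r-nk)}{r}=\frac{nk}{r}\bigl(r-1-(nk-1)\bigr),$$
which is manifestly non-negative on the range $nk\leq r/2$. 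This delivers $\AL_{0}+\sum_{nk\leq r/2}nk(nk-1)\ALB_{k}\leq\sum_{\frac{2q_f}{n-1}+2\leq nk\leq r/2}\frac{nk}{r}(r-1-(nk-1))\ALC_{k}$.

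To finish I would trim both sides using positivity. On the left, restricting the sum to $nk+1\leq\frac{2q_f}{n-1}+2$ only discards non-negative terms, and on that range $\ALB_{k}=\AL_{k}$ by $(\ref{(3.7)})$; on the right, $\ALC_{k}\leq\AL_{k}$ because $\ALB_{k}\geq0$ and the displayed coefficient is non-negative. Substituting these two replacements yields the asserted inequality. I expect the only real care to lie in keeping the summation ranges straight—distinguishing the low range annihilated by Lemma~\ref{lemma3.5} from the surviving high range—rather than in the coefficient identity itself, which is a short and clean computation.
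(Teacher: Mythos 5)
Your proposal is correct and follows essentially the same route as the paper: verticality of $\overline{R}$ giving $\overline{R}^2\leq 0$, the relation $\overline{R}^2=R^2-\sum n^2k^2\ALC_k$ combined with $(\ref{(1.15)})$ to obtain $(\ref{(3.8)})$, substitution into $(\ref{(1.16)})$ with the coefficient identity $\frac{r-1}{r}n^2k^2-nk(nk-1)=\frac{nk}{r}(r-nk)$, and the final trimming of ranges via $\ALB_k\geq 0$ and $(\ref{(3.7)})$. No gaps; this matches the paper's chain of three inequalities step for step.
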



Using this, we will prove the following:

\begin{thm}
Let $f:S \to B$ be a locally non-trivial primitive cyclic covering fibration of type $(g,0,n)$ such that 
there is a component $C \subset \Fix(G)$ with $C^2 = 0$. 
If $q_f>0$ and $n\geq 3$, then
\begin{align}
\numberwithin{equation}{section}
\lambda_{f}\geq\lambda_{g,n,q_{f}}^{2}:=8-\frac{2n(g+n-1)}{(g-q_{f})(q_{f}+n-1)}
\biggl(=8-\frac{4rn}{(n-1)(2+\frac{2q_{f}}{n-1})(r-(2+\frac{2q_{f}}{n-1}))}\biggr).
\label{(3.9)}
\end{align}
\label{thm4.7}
\end{thm}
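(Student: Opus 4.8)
The plan is to re-run the argument of Theorem~\ref{prop2.3}, feeding in the sharper estimate for $\alpha_0$ from Proposition~\ref{prop3.7} in place of the weaker $(\ref{(2.2)})$. First I would start from $(\ref{(1.10)})$ and $(\ref{(1.11)})$ with $h=0$ and, exactly as in Theorem~\ref{prop2.3}, write for a real parameter $\lambda$
$$
(r-1)(K_f^2-\lambda\chi_f)\geq A(\lambda)\alpha_0+\sum_{k\geq1}^{nk\leq r/2}\bigl(a_k-\lambda\bar a_k\bigr)\alpha_k .
$$
It helps to keep two coefficient families in view: the substituted coefficient $P_k(\lambda):=\frac{(n-1)(r-1)}{4r}(8-\lambda)nk(r-nk)-(r-1)n$ already met in $(\ref{2.2'})$, and the intrinsic coefficient $Q_k(\lambda):=\frac{n}{12}\bigl(\lambda(n-1)k((2n-1)k-3)-12((n-1)k-1)^2\bigr)$, which is the $h=0$ analogue of the summand handled in Theorem~\ref{thm2.7}. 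A direct computation gives the identities $a_k-\lambda\bar a_k=(r-1)Q_k(\lambda)+A(\lambda)nk(nk-1)$ and hence $P_k(\lambda)=(r-1)Q_k(\lambda)+A(\lambda)\bigl(\frac{nk(r-nk)}{r}+nk(nk-1)\bigr)$; these are what make the bookkeeping collapse.

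Next I would record the arithmetic of the target slope. Setting $s:=2+\frac{2q_f}{n-1}=\frac{2(q_f+n-1)}{n-1}$, one has $r-s=\frac{2(g-q_f)}{n-1}$, and the stated value equals $\lambda_{g,n,q_f}^2=8-\frac{4rn}{(n-1)s(r-s)}$; this is precisely the $\lambda$ for which $P_k(\lambda)$ vanishes at $nk=s$, so that $P_k(\lambda_{g,n,q_f}^2)\geq0$ if and only if $nk(r-nk)\geq s(r-s)$. Because the hypotheses here are exactly those of Theorem~\ref{xiaoconj}, we already know $q_f\leq(g-n+1)/2$, i.e. $s\leq r/2$; as $x(r-x)$ is increasing on $[0,r/2]$, the last equivalence simplifies to the clean threshold $nk\geq s$.

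With these preliminaries the proof splits along $nk=s$. I would apply Proposition~\ref{prop3.7} to replace $\alpha_0$ by its upper bound — legitimate once $A(\lambda_{g,n,q_f}^2)\leq0$, since then the upper bound for $\alpha_0$ becomes a lower bound for $A(\lambda_{g,n,q_f}^2)\alpha_0$ — and then collect the coefficient of each $\alpha_k$. For the large indices $nk\geq s$ the substitution reproduces exactly $P_k(\lambda_{g,n,q_f}^2)$, which is $\geq0$ by the threshold observation. For the small indices $nk+1\leq s$, the very terms moved to the left in Proposition~\ref{prop3.7} carry the factor $nk(nk-1)$, and the resulting contribution $-A(\lambda_{g,n,q_f}^2)nk(nk-1)$ cancels the $+A(\lambda_{g,n,q_f}^2)nk(nk-1)$ buried in $a_k-\lambda\bar a_k$, leaving the clean coefficient $(r-1)Q_k(\lambda_{g,n,q_f}^2)$. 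Summing then yields $K_f^2-\lambda_{g,n,q_f}^2\chi_f\geq0$.

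The structural part above is short; the real friction is in the two sign checks, and this is where I expect the main obstacle. I would verify $A(\lambda_{g,n,q_f}^2)\leq0$ and $Q_k(\lambda_{g,n,q_f}^2)\geq0$ by reducing them to polynomial inequalities in $r$ and $n$, in the style of Theorems~\ref{prop2.3} and \ref{thm2.7}: for $Q_k$ one checks that it is increasing in $k$ and nonnegative at $k=1$ once $\lambda_{g,n,q_f}^2\geq\frac{12(n-1)}{2n-1}$, while $A\leq0$ reduces to $(n^2-1)s(r-s)\geq n((2n-1)r-3n)$. Both are exactly where the global numerics enter: the bound $r\geq2n$, equivalently $g\geq(n-1)^2$, from Lemma~\ref{lem_added} is what forces $A(\lambda_{g,n,q_f}^2)\leq0$ already at $q_f=1$, and the condition $q_f\geq(n-1)^2/2$ that makes the small-index range nonempty is precisely the regime in which $\lambda_{g,n,q_f}^2\geq\frac{12(n-1)}{2n-1}$ can be established. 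The one remaining bookkeeping point is the boundary index $nk=\lfloor s\rfloor$ when $s\notin\mathbb{Z}$, which falls in neither sum of Proposition~\ref{prop3.7} and must be treated directly from $a_k-\lambda\bar a_k$.
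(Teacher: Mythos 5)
Your proposal follows the same route as the paper's proof: Proposition \ref{prop1.2} evaluated at $\lambda^2_{g,n,q_f}$, Proposition \ref{prop3.7} fed in via $A(\lambda^2_{g,n,q_f})\le0$, a split of the indices at the threshold $nk=s:=2+\tfrac{2q_f}{n-1}$, and exactly your two sign checks: the paper's display (\ref{(3.14)}) is your $P_k$ (nonnegative on $nk\ge s$ because it vanishes at $nk=s$ and $x(r-x)$ is monotone on $[0,r/2]$), and its display (\ref{(3.12)}) is your identity $-A(\lambda)nk(nk-1)+a_k-\lambda\bar a_k=(r-1)Q_k(\lambda)$, handled, as you propose, by monotonicity in $k$ together with $\lambda^2_{g,n,q_f}\ge\frac{12(n-1)}{2n-1}$. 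Both of your structural identities are correct. The only organizational difference is that the paper first splits into the cases $n\ge s$ and $n<s$: in the former it just quotes Theorem \ref{prop2.3} (since then $\lambda^2_{g,n,q_f}\le\lambda^1_{g,n}$), and in the latter it gets $A(\lambda^2_{g,n,q_f})\le0$ for free from the monotonicity of $A$ in $\lambda$ and the already-proved $A(\lambda^1_{g,n})\le0$, whereas you verify $A(\lambda^2_{g,n,q_f})\le0$ directly for all $q_f\ge1$; your reduction to $(n^2-1)s(r-s)\ge n\bigl((2n-1)r-3n\bigr)$ and its verification at $q_f=1$ from $r\ge2n$ are correct, so this works and removes the case split.

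The one substantive point is the boundary index with $s-1<nk<s$ (possible when $s\notin\Z$ and its integer part lies in $n\Z$, e.g. $n=4$, $q_f=4$, $s=14/3$, $k=1$), which you flag but leave open. Here you are actually more careful than the paper, which silently misassigns these indices: after ``applying Proposition \ref{prop3.7}'' its first sum runs over $nk<s$, although the left-hand sum of Proposition \ref{prop3.7} only covers $nk+1\le s$, so for a boundary index the paper uses the coefficient $-Ank(nk-1)+a_k-\lambda^2_{g,n,q_f}\bar a_k$, which that proposition does not justify. As you say, the honest coefficient there is $a_k-\lambda^2_{g,n,q_f}\bar a_k$, and the step you defer does close: since $a_k-\lambda\bar a_k=\frac{(n^2-1)k(r-nk)(12-\lambda)}{12}-(r-1)n$ and $12-\lambda^2_{g,n,q_f}>4$, nonnegativity reduces to $(n^2-1)nk(r-nk)\ge3n^2(r-1)$, which holds for every admissible boundary index except $n=4$, $k=1$, $r=12$, where one must instead use the exact value $12-\lambda^2_{g,n,q_f}=4+\frac{4rn}{(n-1)s(r-s)}$. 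So your plan is essentially the paper's proof and is sound; to be complete it needs this last estimate written out (as, strictly speaking, does the paper).
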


\begin{proof}
We first remark that, for two real numbers $x$, $y$ with $x+y\leq r$, we have $x(r-x)\geq y(r-y)$ if and only if $x\geq y$.
Since we have $n+(2+2q_f/(n-1))\leq r$ by the proof of Theorem~\ref{xiaoconj} and $r\geq 2n$, this observation works for 
$x=n$, $y=2+2q_f/(n-1)$.

\smallskip

(i) The case of $n\geq 2+\frac{2q_{f}}{n-1}$, i.e., $\frac{(n-2)(n-1)}{2}\geq q_{f}$.

Since $n\geq 2+\frac{2q_{f}}{n-1}$, we get 
$$
n(r-n)\geq\bigl( 2+\frac{2q_{f}}{n-1} \bigr) \bigl( r-(2+\frac{2q_{f}}{n-1}) \bigr).
$$
Therefore, $\lambda_{g,n}^1 \geq \lambda_{g,n,q_{f}}^2$, 
and  $(\ref{(3.9)})$ follows from Theorem $\ref{prop2.3}$. 

\smallskip

(ii) The case of $n< 2+\frac{2q_{f}}{n-1}$. 

In this case, we have 
\begin{align*}
\bigl( 2+\frac{2q_{f}}{n-1} \bigr) \bigl( r-(2+\frac{2q_{f}}{n-1}) \bigr)\geq n(r-n)
\end{align*}
and, hence, $\lambda_{g,n}^1 \leq \lambda_{g,n, q_f}^2$. 
Then we have $A(\lambda_{g,n,q_{f}}^2)\leq0$, since the function 
$A(\lambda)$ defined in the proof of Proposition~\ref{prop2.3} is decreasing in $\lambda$ and we have already proved $A(\lambda_{g,n}^1)\leq0$ there. 

From Proposition~\ref{prop1.2}, we have 
\begin{align}
\numberwithin{equation}{section}
(r-1)(K_{f}^2-\lambda_{g,n,q_{f}}^2\chi_{f})\geq A(\lambda_{g,n,q_{f}}^2)\AL_{0}+
\sum_{k\geq1}^{nk \leq \frac{r}{2}}(a_{k}-\lambda_{g,n,q_{f}}^2\bar{a}_{k})\alpha_{k}
\label{(3.10)}
\end{align}
where $a_{k}$ and $\bar{a_{k}}$ are the same as in the proof of Proposition~\ref{prop2.3}.
Applying Proposition~\ref{prop3.7} to (\ref{(3.10)}), we get 
\begin{align*}
(r-1)(K_{f}^2-\lambda_{g,n,q_{f}}^2\chi_{f})&\geq\sum_{k\geq 1}^{nk< \frac{2q_f}{n-1}+2}
(-A(\lambda_{g,n,q_{f}}^2)nk(nk-1)+
a_{k}-\lambda_{g,n,q_{f}}^2\bar{a}_{k})\alpha_{k}\\
&+\sum_{2+\frac{2q_{f}}{n-1}\leq nk}^{nk \leq \frac{r}{2}}(A(\lambda_{g,n,q_{f}}^2)\frac{nk}{r}(r-nk)+a_{k}-\lambda_{g,n,q_{f}}^2\bar{a}_{k})\alpha_{k}.
\end{align*}
First, we will show 
\begin{align}
-A(\lambda_{g,n,q_{f}}^2)nk(nk-1)+
a_{k}-\lambda_{g,n,q_{f}}^2\bar{a}_{k}\geq0
\label{3.12'}
\end{align}
for any positive integer $k$ with $nk+1\leq2+\frac{2q_{f}}{n-1}$.
By a simple calculation, we get 
\begin{align}
\nonumber 
&-A(\lambda_{g,n,q_{f}}^2)nk(nk-1)\AL_{0}+
a_{k}-\lambda_{g,n,q_{f}}^2\bar{a}_{k}\\
=&nk\biggl( \frac{(n-1)(r-1)}{12n}(nk-1)(\lambda_{g,n,q_{f}}^2 (2n-1)-12(n-1))\label{(3.12)} \\
&\hspace{4cm} +\frac{(n^2-1)(r-1)}{12n}(12-\lambda_{g,n,q_{f}}^2)\biggr)-(r-1)n. \nonumber
\end{align} 
We claim that $\lambda_{g,n,q_{f}}^2\geq\frac{12(n-1)}{2n-1}$.
It is equivalent to 
$$
4(n+1)q_{f}(g-n+1-q_f)+(2n-4)g-2n(n-1)(2n-1)\geq 0.
$$
From $g-n+1\geq 2q_{f}$ and $q_f\geq \frac{(n-2)(n-1)}{2}+1$, we easily see that it holds true.
Since $\lambda_{g,n,q_{f}}^2\geq \frac{12(n-1)}{2n-1}$, the right hand side of (\ref{(3.12)}), which is incleasing in $k$, is not less than 
\begin{align*}
 &n\biggl( \frac{(n-1)(r-1)}{12n}(n-1)(\lambda_{g,n,q_{f}}^2 (2n-1)-12(n-1))+
\frac{(n^2-1)(r-1)}{12n}(12-\lambda_{g,n,q_{f}}^2) \biggr)-(r-1)n\\
&=\frac{n(n-1)(r-1)}{6}\biggl(\lambda_{g,n,q_{f}}^2(n-2)-6(n-3)\biggr)-(r-1)n\\
&= n(r-1)\frac{n(n-2)}{2n-1}\\
&\geq0.
\end{align*}
Therefore we get (\ref{3.12'}).

Secondly, we will show 
\begin{align}
A(\lambda_{g,n,q_{f}}^2)\frac{nk}{r}(r-nk)+a_{k}-\lambda_{g,n,q_{f}}^2\bar{a}_{k}\geq0
\label{3.14'}
\end{align}
for any positive integer $k$ satisfying $\frac{r}{2}\geq nk\geq 2+\frac{2q_{f}}{n-1}$.
By a simple calculation, we get
\begin{align}
\nonumber
&A(\lambda_{g,n,q_{f}}^2)\frac{nk}{r}(r-nk)+a_{k}-\lambda_{g,n,q_{f}}^2\bar{a}_{k}\\
=&\frac{(n-1)(r-1)}{4r}nk(r-nk)(8-\lambda_{g,n,q_{f}}^2)-(r-1)n
\label{(3.14)}
\end{align} 
Since we have 
$$nk(r-nk)\geq (2+\frac{2q_{f}}{n-1})\bigl( r-(2+\frac{2q_{f}}{n-1}) \bigr)$$
for any positive integer $k$ satisfying $2+\frac{2q_{f}}{n-1}\leq nk \leq \frac{r}{2}$, 
the right hand side of $(\ref{(3.14)})$ is not less than 
\begin{align*}
\frac{(n-1)(r-1)}{4r}(2+\frac{2q_{f}}{n-1})\bigl( r-(2+\frac{2q_{f}}{n-1}) \bigr)
(8-\lambda_{g,n,q_{f}}^2)-(r-1)n=0.
\end{align*}
In sum, we have shown $K_{f}^2-\lambda_{g,n,q_{f}}^2\chi_{f}\geq0$.
\end{proof}

\section{An example.}
We construct primitive cyclic covering fibrations of type $(g,0,n)$ with 
relative minimal irregularity $q_{f}$ satisfying $g+n-1=m(q_{f}+n-1)$ for any integer $m \geq 2$.
Hence, when $m=2$, this implies that the bound of $q_f$ in Theorem~\ref{xiaoconj} is sharp.
Also, our examples show that the slope bound (\ref{(3.9)}) in Theorem~\ref{thm4.7} is sharp.

Let $\PHI:W:=\PROJ(\mathcal{O}_{\PROJ^{1}}\bigoplus \mathcal{O}_{\PROJ^{1}}(e))\to B:=\PROJ^1$ be the Hirzebruch surface of degree $e \geq 0$.
Denote by $\GA$ and $C_{0}$ a fiber of $\PHI$ and
the section with $C_{0}^2=-e$, respectively.
We know that $mC_{0}+b\GA$ is very ample if and only if $b>me$.
So we take $b_{0}$ with $b_{0}>me$.

We take two general members $D,D'$ of $|mC_{0}+b_{0}\GA|$ 
which intersect each other transversely.
Let $\Lambda$ be the pencil generated by $D$ and $D'$.
Then $\Lambda$ define the rational map $\PHI_{\Lambda}:W \cdots\to \PROJ^1$.
Let $\psi$ be a minimal succession blowing-ups which eliminates the base points of $\Lambda$. 
We get a relatively minimal fibration $\tilde{\PHI '}:\WT \to\PROJ^1$ by putting $\tilde{\PHI '}=\PHI_{\Lambda} \circ \psi$.
Denote by $\GAT'$ a general fiber of $\tilde{\PHI}'$ and $K_{\WT}$ 
a canonical divisor of $\WT$.
By a simple calculation, we get 
\begin{align}
\numberwithin{equation}{section}
K_{\WT}^2=8-x,\;K_{\WT}.\GAT'=\frac{m-1}{m}x-2m,\;\GAT^2=0,
\label{5.1}
\end{align}
where $x$ is the a number of blowing-ups in $\psi$. 
Note that $x=(mC_{0}+b_{0}\GA)^2$.

Let $\Delta \subset \PROJ^1$ be a set of $\frac{2q}{n-1}+2$ general points, where 
$q$ is an integer satisfying $\frac{2q}{n-1}+2\in n\Z$.
Then there is a divisor $\D'$ on $\PROJ^1$ such that $n\D'=\Delta$.
Let $\RT=(\tilde{\PHI}')^{\ast}\Delta$ be the fiber of $\tilde{\PHI}'$ over $\Delta$.
Since $\Delta$ is general, we can assume that $\RT$ is both reduced and smooth.

We consider a classical cyclic $n$-covering
$$\theta':B'=
\mathrm{Spec}_{\PROJ^1}\biggl(\bigoplus_{j=0}^{n-1} \mathcal{O}_{\PROJ^1}(-j\D')\biggr)\to\PROJ^1 .$$
Since $\Delta$ is general, we can assume that 
the fiber product $\ST:=B\times_{\PROJ^1}\WT$ is smooth.
Noting that the morphism $\tilde{\theta}:\ST \to \WT$ induced by $\theta'$ is nothing but the natural one
$$
\ST=\mathrm{Spec}_{\PROJ^1}\biggl(\bigoplus_{j=0}^{n-1} 
\mathcal{O}_{\WT}(-j(\PHI')^{\ast}\D')\biggr)\to \WT,
$$
one gets a commutative diagram 
\begin{equation}
\nonumber
 \vcenter{ \xymatrix{ 
  B & \\
 \ST \ar[u]^{\tilde{f}} \ar[r]^{\tilde{\theta}} \ar[d]_{\tilde{f}'} &  \WT \ar[lu]_{\tilde{\PHI}} \ar[d]^{\tilde{\PHI}'} \\
 B' \ar[r]_{\theta'}   & \PROJ^1  } } 
 \end{equation}
where $\tilde{f}:=\tilde{\PHI} \circ \tilde{\theta}$.

By the construction, we get $q_f=q=g(B')$.
From the formulae
\begin{align*}
 K_{\ST}^2=n(K_{\WT}+\frac{n-1}{n}\RT)^2,\quad 
 \chi(\mathcal{O}_{\ST})=
 n\chi(\mathcal{O}_{\WT})+\frac{1}{2}\sum_{j=1}^{n-1}\frac{1}{n}j\RT(\frac{1}{n}j\RT+K_{\WT}),
\end{align*}
and $(\ref{5.1})$, we get
\begin{align}
\numberwithin{equation}{section}
 K_{\ST}^2=\left(4\frac{m-1}{m}\left(\frac{q}{n-1}+1\right)(n-1)-n\right)x+8\left(n-m(n-1)\left(\frac{q}{n-1}+1\right)\right),
\label{5.2}
\end{align}
\begin{align}
\numberwithin{equation}{section}
 \chi(\mathcal{O}_{\ST})=\frac{m-1}{2m}\left(\frac{q}{n-1}+1\right)(n-1)x+n
 -m(n-1)\left(\frac{q}{n-1}+1\right).
\label{5.3}
\end{align}
Let $g$ be the genus of fibration $f:\ST \to B$.
Then it is easy to see that 
\begin{align*}
\frac{2g}{n-1}+2=m\left(\frac{2q}{n-1}+2\right).
\end{align*}
Hence we get 
\begin{align*}
 &K_{\tilde{f}}^2=K_{\ST}^2-8(g-1)(g(B)-1)=\left(4\frac{m-1}{m}\left(\frac{q}{n-1}+1\right)(n-1)-n\right)x,\\
 &\chi_{\tilde{f}}=\chi(\mathcal{O}_{\ST})-(g-1)(g(B)-1)=
\frac{m-1}{2m}\left(\frac{q}{n-1}+1\right)(n-1)x.
\end{align*}
Therefore we get 
\begin{align*}
\lambda_{\tilde{f}}=\frac{K_{\tilde{f}}^2}{\chi_{\tilde{f}}}=
8-\frac{2n(g+n-1)}{(g-q_{f})(q_{f}+n-1)}
\end{align*}
by $q=q_{f}$. 

We remark that $\tilde{f}$ is relatively minimal. In fact the singular points of $R$, 
the image of $\RT$ in W, are all of multiplicity $\frac{2q_{f}}{n-1}+2\in n\Z$
and can be resolved by a single blowing-up.
So there is no $\tilde{\PHI}$-vertical $(-n)$-curve in $\WT$.
Therefore there is no $\tilde{f}$-vertical $(-1)$-curve in $\ST$.

{}

\bigskip

Department of Mathematics, Graduate School of Science, Osaka University,

1--1 Machikaneyama, Toyonaka, Osaka 560-0043, Japan

e-mail address: u802629d@ecs.osaka-u.ac.jp
 \end{document}